\documentclass[12pt]{article}

\usepackage{graphicx,tikz,color,url}
\usepackage{amsmath,amssymb,latexsym}
\usepackage{pgfplots}
\usepackage{mathrsfs}
\usepackage{cite}
\usepackage[hidelinks]{hyperref}
\usepackage{soul}
\usepackage{caption}
\usepackage{subcaption}
\usepackage{multicol}
\usepackage{mathdots}
\usepackage{cite}
\usepackage{mathtools}
\usepackage{adjustbox}
\usepackage{float}
\usepackage{array}
\usepackage{setspace}
\usepackage{tikz-timing}
\usepackage{enumitem}
\usetikzlibrary{positioning,shapes,fit,arrows}
\usetikzlibrary{graphs}
\usetikzlibrary{graphs.standard}
\usetikzlibrary{shadows}
\definecolor{myblack}{RGB}{0,0,0}
\definecolor{white}{rgb}{1.0, 1.0, 1.0}
\definecolor{red}{rgb}{1.0, 0.0, 0.0}

\newtheorem{theorem}{Theorem}[section]

\newtheorem{lemma}[theorem]{Lemma}
\newtheorem{proposition}[theorem]{Proposition}
\newtheorem{corollary}[theorem]{Corollary}

\newtheorem{observation}[theorem]{Observation}
\newtheorem{remark}{Remark}

\newcommand{\proof}{\noindent{\bf Proof.\ }}
\newcommand{\qed}{\hfill $\square$ \bigskip}

\newcommand{\ie}{that is}

\begin{document}

\title{Packing chromatic critical graphs with radius at most $2$}

\author{
Asl{\i}han G\"{u}r$^{a,}$\thanks{Email: \texttt{agur@gtu.edu.tr}}
\and 
 Didem G\"{o}z\"{u}pek$^{b,}$\thanks{Email: \texttt{didem.gozupek@gtu.edu.tr}}
\and 
Hadi Alizadeh$^{a,}$\thanks{Email: \texttt{halizadeh@gtu.edu.tr}}
}
\maketitle

\begin{center}
$^a$ Department of Mathematics, Gebze Technical University, T\"urkiye\\
	\medskip
	$^b$ Department of Computer Engineering, Gebze Technical University, T\"urkiye
\end{center}

\begin{abstract}
For a graph $G$ with vertex set $V(G)$ and a positive integer $i$, an $i$-packing in $G$ is a subset $X$ of $V(G)$ such that the distance between any two distinct vertices of $X$ is greater than $i$. 
The packing chromatic number of $G$, denoted by $\chi_{\rho}(G)$, is the smallest positive integer $k$ for which there exists a partition $X_1, X_2, \ldots, X_k$ of $V(G)$ such that $X_i$ is an $i$-packing in $G$ for every $i \in [k]$. A graph $G$ is called  $\chi_\rho$-critical if $\chi_\rho(H) < \chi_\rho(G)$ holds for every proper subgraph $H$ of $G$. In this paper, we provide a structural characterization of $\chi_{\rho}$-critical graphs with radius $1$, and completely determine the $\chi_{\rho}$-critical cactus graphs with radius $2$ and diameter $2$ or $3$.
\end{abstract}

\noindent
{\bf Keywords:} packing coloring, packing critical graph, radius, tree, cactus graph. \\

\noindent
{\bf AMS Subj.\ Class.\ (2020)}: 05C15, 05C12, 05C70, 05C75.

%%%%%%%%%%%%%%%%%%%%%%%%%%%%%%%%%%%%%%%
\section{Introduction} \label{sec:intro}
%%%%%%%%%%%%%%%%%%%%%%%%%%%%%%%%%%%%%%%
The concept of \textit{packing coloring} was originally introduced by Goddard et al.~\cite{goddard-2008} under the name \textit{broadcast coloring}. Bre\v{s}ar et al.~\cite{bresar-2007} used the term packing coloring for the same concept.

For a graph $G$ with vertex set $V(G)$ and a positive integer $i$, an $i$-packing in $G$ is a subset $X$ of $V(G)$ such that $d_G(u,v)>i$ for every two distinct vertices $u,v \in X$.
\textit{The packing chromatic number} of $G$, denoted by $\chi_{\rho}(G)$, is the smallest positive integer $k$ for which there exists a partition $X_1, X_2, \ldots, X_k$ of $V(G)$ such that $X_i$ is an $i$-packing in $G$ for every $i \in [k]$.
Equivalently, a partition $X_1, X_2, \ldots, X_k$ of $V(G)$ defines a coloring $c : V(G) \to [k]$ by setting $c(u)=i$ for every  $u \in X_i$. This coloring is called a $k$-packing coloring of $G$.

A graph $G$ is called \textit{packing chromatic critical} or $\chi_\rho$-critical if $\chi_\rho(H) < \chi_\rho(G)$ for every proper subgraph $H$ of $G$ and $\chi_\rho(G) = k$, then $G$ is called a $k$-$\chi_\rho$-critical graph~\cite{bresar-2022}.
If $\chi_\rho(G - v) < \chi_\rho(G)$ for every $v \in V(G)$, then $G$ is called $\chi_\rho$-vertex-critical~\cite{klavzar-2019}. Similarly, if $\chi_\rho(G - e) < \chi_\rho(G)$ for every $e \in E(G)$, then $G$ is called $\chi_\rho$-edge-critical; for graphs with no isolated vertices, this is equivalent to $\chi_\rho$-criticality~\cite{bresar-2022}.

Goddard et al.~\cite{goddard-2008} determined the packing chromatic number of paths and cycles, characterized graphs with $\chi_{\rho}(G) \in \{2,3\}$, and obtained several results for trees.
Bre\v{s}ar et al.~\cite{bresar-2018}, established several bounds related to the independence number $\alpha(G)$ and provided necessary conditions for graphs satisfying $\chi_{\rho}(G)=\chi(G)$.
Goddard et al.~\cite{goddard-2008} showed that deciding whether a planar graph admits a $4$-packing coloring is NP-hard. The packing coloring problem is also NP-complete for trees~\cite{fiala-2010} and for chordal graphs of diameter at least three~\cite{kim-2018}.

Numerous results have also been obtained for specific graph classes, such as, trees, subcubic graphs, Sierpi\'nski-type graphs, Cartesian products of graphs,   lexicographic products of graphs, corona of graphs, and infinite graphs~\cite{goddard-2008, rall-2008, bresar-2007, balogh-2018, bresar-ferme-2018-1, gastineau-2019, bresar-gastineau-2020, deng-2021, bresar-ferme-2018, korze-2019, bozovic-2021, laiche-2017, fiala-2009, togni-2014, ekstein-2014, martin-2017}. For a comprehensive overview, we refer to the survey~\cite{bresar-survey-2020}. Packing coloring continues to be actively studied; see, for example,~\cite{furmanczyk-2025,gregor-2024}.

We now turn to packing chromatic critical graphs. Klav\v{z}ar and Rall~\cite{klavzar-2019} introduced $\chi_{\rho}$-vertex-critical graphs and established several structural results. In particular, they characterized $3$-$\chi_{\rho}$-vertex-critical graphs and provided a partial characterization for $4$-$\chi_{\rho}$-vertex-critical graphs.
Subsequently, Ferme characterized all $4$-$\chi_{\rho}$-vertex-critical graphs in~\cite{ferme-2022}. 
In~\cite{bresar-2022}, Bre\v{s}ar and Ferme introduced $\chi_{\rho}$-critical graphs, and characterized the $2$-$\chi_{\rho}$-critical and $3$-$\chi_{\rho}$-critical graphs, proved that for trees $\chi_{\rho}$-criticality is equivalent to $\chi_{\rho}$-vertex-criticality, and gave characterizations of $\chi_{\rho}$-critical graphs with diameter $2$, as well as of $\chi_{\rho}$-critical block graphs with diameter $3$. They posed the open question of characterizing $\chi_\rho$-critical graphs of radius $2$. Motivated by the latter two results, we address this question for cactus graphs by characterizing all $\chi_{\rho}$-critical cactus graphs of radius $2$ and diameter $2$ or $3$. We also obtain a structural characterization of $\chi_{\rho}$-critical graphs of radius $1$.

The remainder of this paper is organized as follows. In Section~\ref{sec:pre}, we present basic definitions and notations used throughout the paper. In Section~\ref{sec:rad1}, we present a structural characterization of $\chi_{\rho}$-critical graphs with radius $1$. In Section~\ref{sec:rad2}, we determine all $\chi_{\rho}$-critical cactus graphs with radius $2$ and diameter $2$ or $3$. Sections~\ref{sec:rad22} and~\ref{sec:rad23} consider these two cases, respectively. Finally, in Section~\ref{sec:mainthm} we provide a complete structural characterization for $\chi_{\rho}$-critical cactus graphs with radius $2$ and diameter $3$, we summarize our main results and present some open problems.

%%%%%%%%%%%%%%%%%%%%%%%%%%%%%%%%%%%%%%%
\section{Preliminaries} \label{sec:pre}
%%%%%%%%%%%%%%%%%%%%%%%%%%%%%%%%%%%%%%%

Throughout this paper, all graphs are finite and simple. Let $G = (V(G), E(G))$ be a graph with vertex set $V(G)$ and edge set $E(G)$, and let $u, v \in V(G)$ be arbitrary vertices.
We write $G-u$ and $G-e$ for the subgraphs obtained by removing a vertex $u$ and an edge $e$ from $G$, respectively. 
The \emph{distance} between $u$ and $v$, denoted by $d_G(u,v)$, is the length of a shortest $u$--$v$ path in $G$.
The \emph{neighborhood} of $u$, denoted by $N_G(u)$, is the set of vertices adjacent to $u$. If $v\in N_G(u)$, then $u$ and $v$ are called \emph{neighbors} in $G$. 

For $S \subseteq V(G)$, the subgraph induced by $S$ is the graph $G[S]$ with vertex set $S$ and edge set consisting of the edges of $G$ with both endpoints in $S$. The \emph{eccentricity} of  \(u\), denoted by \(\varepsilon_G(u)\), 
is the maximum distance between \(u\) and any other vertex of \(G\), \ie,
$\varepsilon_G(u) = \max_{v \in V(G)} \{\mathrm{d}_G(u,v)\}.$
The \emph{radius} of $G$ is the minimum eccentricity of its vertices, and the \emph{diameter} of $G$ is the maximum eccentricity of its vertices, \ie,
$\operatorname{rad}(G) = \min_{u \in V(G)} \varepsilon_G(u)$ and $\operatorname{diam}(G) = \max_{u \in V(G)} \varepsilon_G(u).$ 
The \emph{center} of $G$ is the set of all vertices $v$ such that the eccentricity of $v$ equals the radius of $G$.

A vertex $u$ is \emph{universal} if $d_G(u,v) = 1$ for all $v \in V(G) \setminus \{u\}$. 
A \emph{leaf} is a vertex adjacent to exactly one vertex in $G$. 
A vertex $u$ is a \emph{cut vertex} if $G - u$ is disconnected, and an edge $e$ is a \emph{cut edge} if $G - e$ is disconnected. The \emph{order} of a graph $G$ is its number of vertices. A path, a cycle, and a complete graph of order $n$ are denoted by $P_n$, $C_n$, and $K_n$, respectively, and a star of order $n+1$ is denoted by $K_{1,n}$.

A \emph{block} is a connected graph with no cut vertex. 
A graph $G$ is called a \emph{cactus graph} if it is connected and each block is either a cycle or a $K_2$. 
If $G$ is acyclic, then $G$ is a tree; if $G$ has a cycle $C$, then $C$ is chordless, since each edge belongs to at most one cycle.

A set $S \subseteq V(G)$ is called an \emph{independent set} if no two vertices in $S$ are adjacent. The \emph{independence number} of $G$, denoted by $\alpha(G)$, is the maximum size of an independent set in $G$. 
A graph $G$ is called an \emph{$\alpha$-critical graph} if $\alpha(G - e) > \alpha(G)$ for every edge $e \in E(G)$. 
For further definitions and basic concepts in graph theory, we refer the reader to~\cite{west-1996}.

We begin with a simple observation. 

\begin{observation}\label{obsv1}
If $H_1$ and $H_2$ are the connected components of $G$ after removing a cut edge $e$, then $\operatorname{diam}(G) \geq \operatorname{diam}(H_i)$ for $i \in \{1, 2\}$.
\end{observation}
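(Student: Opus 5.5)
The plan is to show that distances inside each component $H_i$ are the same as the corresponding distances in $G$. The diameter inequality then follows by taking a maximum. Write $e = xy$ with $x \in V(H_1)$ and $y \in V(H_2)$. Implicitly $G$ is connected, since otherwise $\operatorname{diam}(G)$ is infinite and the claim is trivial. Because $e$ is a cut edge of a connected graph, $G-e$ has exactly the two components $H_1$ and $H_2$.

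\textbf{Step 1: no shortest path leaves $H_i$.} Fix $u,v \in V(H_i)$ and let $P$ be a shortest $u$--$v$ path in $G$. Suppose $P$ used a vertex outside $H_i$. Then $P$ would have to leave $H_i$ and later return to it. The only edge of $G$ joining $V(H_1)$ and $V(H_2)$ is $e$, so $P$ would traverse $e$ twice. That is impossible, because a path repeats no vertices. Hence $P$ lies entirely in $H_i$.

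\textbf{Step 2: distances agree.} $H_i$ is a subgraph of $G$, so $d_G(u,v) \le d_{H_i}(u,v)$. Step~1 gives a $u$--$v$ path inside $H_i$ of length $d_G(u,v)$, so $d_{H_i}(u,v) \le d_G(u,v)$. Therefore $d_{H_i}(u,v) = d_G(u,v)$.

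\textbf{Step 3: compare diameters.} Using Step~2,
\[
\operatorname{diam}(H_i)=\max_{u,v\in V(H_i)} d_{H_i}(u,v)=\max_{u,v\in V(H_i)} d_G(u,v)\le \max_{u,v\in V(G)} d_G(u,v)=\operatorname{diam}(G).
\]

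The only point that needs care is the path-crossing argument in Step~1. It relies on $e$ being the unique edge between $V(H_1)$ and $V(H_2)$, which holds because $H_1$ and $H_2$ are the components of $G-e$. Everything else is routine.
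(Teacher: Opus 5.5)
Your proof is correct. The paper gives no proof of this observation, and your argument supplies the standard justification it takes for granted: each $H_i$ is an isometric subgraph of $G$, because any $u$--$v$ path with $u,v \in V(H_i)$ that left $H_i$ would have to cross the unique edge $e$ twice. This gives $d_{H_i}(u,v)=d_G(u,v)$ for all $u,v\in V(H_i)$, which is stronger than needed, and the diameter inequality follows at once.
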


%%%%%%%%%%%%%%%%%%%%%%%%%%%%%%%%%%%%%%%
\section{Structural characterization of $\chi_{\rho}$-critical graphs with radius $1$}
\label{sec:rad1}

In this section, we provide a complete structural characterization of \( \chi_\rho \)-critical graphs with \( \mathrm{rad}(G) = 1 \). If \( \mathrm{rad}(G) = 1 \), then $G$ contains a universal vertex. It is well known that for any connected graph $G$,
$\operatorname{rad}(G) \le \operatorname{diam}(G) \le 2\,\operatorname{rad}(G)$. Therefore, $\operatorname{diam}(G) \in \{1,2\}$.

First we consider the case \( \mathrm{rad}(G) = 1 \) and \( \mathrm{diam}(G) = 1 \).

\begin{observation}\label{obs:complete}
Let $G$ be a graph with $\operatorname{rad}(G)=1$ and $\operatorname{diam}(G)=1$. 
Then $G \cong K_n$ with $n \ge 2$  and $G$ is $\chi_\rho$-critical.
\end{observation}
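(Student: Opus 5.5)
First I show that $G$ is complete. Since $\operatorname{diam}(G)=1$, every two distinct vertices $u,v$ satisfy $d_G(u,v)=1$, so they are adjacent. Hence $G\cong K_n$, where $n=|V(G)|$. The radius of $K_1$ is $0$, not $1$, so $\operatorname{rad}(G)=1$ forces at least two vertices and $n\ge 2$.

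Next I compute $\chi_\rho(K_n)$. In $K_n$ any two distinct vertices are at distance $1$, so no $i$-packing with $i\ge 1$ contains two vertices. Every class $X_i$ of a packing coloring is therefore a singleton, and at least $n$ classes are needed. Giving each vertex its own color in $[n]$ is clearly an $n$-packing coloring, so $\chi_\rho(K_n)=n$. The same argument shows that every graph $H$ satisfies $\chi_\rho(H)\le |V(H)|$, since distinct colors always form trivial packings.

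For criticality, let $H$ be a proper subgraph of $K_n$. There are two cases.
\begin{itemize}
\item If $H$ misses a vertex, then $|V(H)|\le n-1$, so $\chi_\rho(H)\le n-1<n$ by the bound above.
\item If $V(H)=V(K_n)$, then $H$ misses some edge $uv$. Two vertices of $H$ then lie at distance at least $2$, or in different components (distance $\infty$). Color $u$ and $v$ both with color $1$ and give the remaining $n-2$ vertices the distinct colors $2,\dots,n-1$. The class $X_1=\{u,v\}$ is a $1$-packing because $d_H(u,v)>1$, and all other classes are singletons. This is an $(n-1)$-packing coloring, so $\chi_\rho(H)\le n-1<n$.
\end{itemize}
The second case relies on the fact that a distance computed in a subgraph $H$ can only be at least the distance in $G$.

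Every step is routine. The only detail needing care is that removing a single edge drops the packing chromatic number by exactly the one color saved by the shared color $1$. This covers $n=2$, where $H$ is two isolated vertices and $\chi_\rho(H)=1$.
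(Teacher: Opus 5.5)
Your proof is correct and follows the same route as the paper: diameter $1$ forces $G\cong K_n$ with $n\ge 2$, and $K_n$ is $\chi_\rho$-critical. The paper simply cites the criticality of complete graphs as well known, whereas you verify it directly by splitting proper subgraphs into vertex-deleted and edge-deleted cases, and that verification is sound (your remark that the drop is ``exactly'' one color is true but not needed, since only $\chi_\rho(H)\le n-1$ is used).
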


\proof
Let $G$ be a graph with $\operatorname{rad}(G)=1$ and $\operatorname{diam}(G)=1$. Since $\operatorname{diam}(G)=1$, $G$ is a complete graph, \ie, $G \cong K_n$ with $n \ge 2$. 
It is well known that complete graphs are $\chi_\rho$-critical.
\qed

We therefore consider the case $\mathrm{rad}(G) = 1$ and $\mathrm{diam}(G) = 2$. Bre\v{s}ar and Ferme \cite{bresar-2022} characterized $\chi_\rho$-critical graphs of diameter $2$. For graphs with radius $1$ and diameter $2$, we reformulate this criterion in structural terms. In particular, we show that if $u$ is a universal vertex of $G$, then the $\chi_\rho$-criticality of $G$ is determined by the structure of $G-u$. 

We begin by recalling the following result of Goddard et al.~\cite{goddard-2008}, which we use frequently.

\begin{lemma}~{\rm \cite{goddard-2008}}\label{lemma4}
For every graph $G$, $\chi_\rho(G) \leq |V(G)|-\alpha(G)+1,$ with equality if $G$ has diameter two.
\end{lemma}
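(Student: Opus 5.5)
We prove the inequality $\chi_\rho(G)\le |V(G)|-\alpha(G)+1$ for every graph $G$ by exhibiting an explicit packing coloring with that many colors, and then treat diameter two separately by a lower-bound argument.

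\textbf{Upper bound.} Let $n=|V(G)|$ and fix a maximum independent set $I$, so $|I|=\alpha(G)$. Give every vertex of $I$ color $1$. Any two distinct vertices of $I$ are non-adjacent, hence at distance at least $2>1$, so $I$ is a $1$-packing. Enumerate the remaining $n-\alpha(G)$ vertices as $v_1,\dots,v_{n-\alpha(G)}$ and assign $v_j$ the color $j+1$. Each class $X_i$ with $i\ge 2$ is then a single vertex and is trivially an $i$-packing, since the condition concerns only pairs of distinct vertices. This is a packing coloring with $1+(n-\alpha(G))$ colors, which gives the bound.

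\textbf{Equality for diameter two.} Now suppose $\operatorname{diam}(G)=2$ (the case $\operatorname{diam}(G)\le 1$ is the complete graph, where equality is immediate). Take an optimal packing coloring $X_1,\dots,X_k$ with $k=\chi_\rho(G)$.

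For every $i\ge 2$, any two distinct vertices of $G$ are at distance at most $2\le i$, so $|X_i|\le 1$.

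$X_1$ is an independent set, so $|X_1|\le\alpha(G)$.

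Therefore
\[
n=\sum_{i=1}^{k}|X_i|\le \alpha(G)+(k-1),
\]
that is, $k\ge n-\alpha(G)+1$. Combined with the upper bound, this gives equality.

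The only point needing care is the observation $|X_i|\le 1$ for $i\ge 2$, which uses diameter at most two. The case where $G$ is disconnected (distances infinite) does not arise for diameter two. For the general upper bound, infinite distances only make packings easier, so the construction above still works.
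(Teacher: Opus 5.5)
Your proof is correct and is the standard argument. For the upper bound you color a maximum independent set with $1$ and give every other vertex its own color; for diameter two you observe that each $X_i$ with $i\ge 2$ is a singleton and $|X_1|\le\alpha(G)$. The paper does not prove this lemma itself (it quotes it from Goddard et al.), but it uses the same color-class counting later, e.g.\ in Proposition~\ref{pro4}, where diameter three gives $|X_j|\le 1$ only for $j\ge 3$ and $X_2$ has to be bounded separately.
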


Using Lemma \ref{lemma4}, we establish the following result.

\begin{proposition}\label{pro14}
If $G$ is a $\chi_\rho$-critical graph with at least three vertices and $\mathrm{rad}(G) = 1$, then $G$ has no leaf.
\end{proposition}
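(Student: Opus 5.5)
We argue by contradiction. Suppose $G$ is $\chi_\rho$-critical with $|V(G)| = n \ge 3$ and $\operatorname{rad}(G)=1$, and that $G$ has a leaf $x$. Let $u$ be a universal vertex. Since $n\ge 3$, the universal vertex has degree at least $2$, so $x\neq u$. The only neighbour of $x$ is therefore $u$. We will show that deleting the edge $e=ux$ does not decrease the packing chromatic number, contradicting criticality; removing the vertex $x$ is handled the same way.

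If $G\cong K_{1,n-1}$, then $\chi_\rho(G)=2$. Deleting an edge $ux$ leaves a star $K_{1,n-2}$ together with an isolated vertex. Because $n-2\ge 1$, this graph still has an edge and so still needs $2$ colours. Hence $G$ is not critical, and we may assume $G$ is not a star. Then $\operatorname{diam}(G)=2$, and Lemma~\ref{lemma4} gives $\chi_\rho(G)=n-\alpha(G)+1$.

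Now consider $H=G-x$. It still contains the universal vertex $u$. It has $n-1\ge 2$ vertices, so its diameter is at most $2$, and Lemma~\ref{lemma4} (or directly the complete case) gives $\chi_\rho(H)=(n-1)-\alpha(H)+1$. We compare $\alpha(H)$ with $\alpha(G)$.
\begin{itemize}
\item Take a maximum independent set $S$ of $H$. Since $G$ is not a star, $H-u$ has a vertex, so we may assume $u\notin S$ (if $S=\{u\}$, swap $u$ for any other vertex).
\item Then $S\cup\{x\}$ is independent in $G$, so $\alpha(G)\ge \alpha(H)+1$.
\item Conversely, removing $x$ from an independent set of $G$ shows $\alpha(G)\le\alpha(H)+1$.
\end{itemize}
Hence $\alpha(G)=\alpha(H)+1$, and
\[
\chi_\rho(H)=n-\alpha(H)=n-\alpha(G)+1=\chi_\rho(G).
\]
So $H$ is a proper subgraph of $G$ with the same packing chromatic number, contradicting the $\chi_\rho$-criticality of $G$.

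The main point needing care is the diameter bookkeeping. We must ensure Lemma~\ref{lemma4} applies with equality to both $G$ and $G-x$. This holds because the universal vertex survives in $G-x$, so both graphs have diameter at most $2$. For diameter $1$, that is complete graphs, the formula $n-\alpha+1=n$ still holds. The star case is exactly where $\operatorname{diam}(G)$ could fail to be $2$ with $u\in S$ forced, and it is treated separately above.
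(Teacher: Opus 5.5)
Your proof is correct and follows essentially the same route as the paper. Both arguments compare $G$ with $G-x$ (the paper works with the nontrivial component of $G-ux$, which is the same graph), show $\alpha(G-x)=\alpha(G)-1$, and apply Lemma~\ref{lemma4} to both graphs, using $\chi_\rho(K_m)=m$ when $G-x$ is complete. Two small remarks: the separate star case is unnecessary, since stars on at least three vertices have diameter $2$ and your main argument already covers them; and $\operatorname{diam}(G)=2$ follows from $G$ being non-complete (it has a leaf and $n\ge 3$), not from $G$ not being a star.
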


\proof Let $G$ be a $\chi_\rho$-critical graph with at least three vertices and $\mathrm{rad}(G) = 1$.  If $\mathrm{diam}(G) = 1$, then it is clear by Observation \ref{obs:complete}.

Let $\mathrm{diam}(G) = 2$.  Suppose to the contrary that $G$ has a leaf $v$. Then $G$ has a unique universal vertex $u$ and $v$ is adjacent only to $u$. Since the diameter of $G$ is $2$, it is not a complete graph and hence, $\alpha(G) \geq 2$. So, there is no maximum independent set in $G$ that contains the universal vertex $u$, \ie, every maximum independent set in $G$ contains $v$. Since $G$ is $\chi_\rho$-critical, we have $\chi_\rho(G-e)<\chi_\rho(G)$, where $e=u v$ is the edge between $u$ and $v$. After removing $e$, $G-e$ becomes disconnected with connected components $H_1$ and $H_2$, where $V\left(H_2\right)=\{v\}$. Since $v$  belongs to every maximum independent set of $G$, $\alpha\left(H_1\right)=\alpha(G)-1$.
Additionally, 
$\chi_\rho(G-e)=\max \left(\chi_\rho\left(H_1\right), \chi_\rho\left(H_2\right)\right)=\chi_\rho\left(H_1\right).$
 By Observation \ref{obsv1} the diameter of $H_1$ satisfies $\operatorname{diam}\left(H_1\right) \leq 2$ since $e$ is a cut edge.

If $\operatorname{diam}\left(H_1\right)=1$, then $H_1$ is a complete graph. In this case,
$\chi_\rho(G-e)=\chi_\rho\left(H_1\right)=|V(H_1)|=|V(G)|-1.$
However, since $\alpha(G) = 2$, Lemma~\ref{lemma4} yields 
$\chi_\rho(G) = |V(G)| - \alpha(G) + 1 = |V(G)| - 1,$ since $G$ has diameter $2$.
This leads to $\chi_\rho(G-e)=\chi_\rho(G)$, contradicting with $G$ being  $\chi_\rho$-critical.

 If $\operatorname{diam}\left(H_1\right)=2$, then by Lemma \ref{lemma4} 
$\chi_\rho(G-e)=\chi_\rho\left(H_1\right)=\left|V\left(H_1\right)\right|-\alpha\left(H_1\right)+1.$
By replacing $\left|V\left(H_1\right)\right|=|V(G)|-1$ and 
$\alpha\left(H_1\right)=\alpha(G)-1$, we have
$\chi_\rho(G-e)=|V(G)|-\alpha(G)+1=\chi_\rho(G)$, which contradicts the assumption that $G$ is $\chi_\rho$-critical.

Since both cases result in contradictions, we conclude that $G$ does not have a leaf. \qed

The next result describes the structure of $\chi_\rho$-critical graphs with radius 1.

\begin{proposition}\label{cor1}
Let $G$ be a graph with at least three vertices and $\mathrm{rad}(G) = 1$. If $G$ is $\chi_\rho$-critical, then $G-u$ is a disjoint union of $\alpha$-critical graphs for every universal vertex $u \in V(G)$. 
\end{proposition}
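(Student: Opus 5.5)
The plan is to apply Lemma~\ref{lemma4} to both $G$ and $G-e$ for every edge $e$ of $G-u$. Since the universal vertex $u$ survives the deletion, $G-e$ still has diameter at most two. Criticality then turns into a strict increase of the independence number under edge deletion, and this increase localizes to the component of $G-u$ containing $e$.

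First dispose of $\operatorname{diam}(G)=1$. By Observation~\ref{obs:complete}, $G\cong K_n$ and $G-u\cong K_{n-1}$. A complete graph on at least two vertices is $\alpha$-critical, since deleting any edge raises $\alpha$ from $1$ to $2$. So assume $\operatorname{diam}(G)=2$.

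Fix a universal vertex $u$ and an edge $e=xy$ of $G-u$. Since $G$ is not complete, $\alpha(G)\ge 2$. A universal vertex lies in no independent set of size at least two, so $\alpha(G)=\alpha(G-u)$. In $G-e$ the vertex $u$ is still universal and $G-e$ is not complete, so $\operatorname{diam}(G-e)=2$ and likewise $\alpha(G-e)=\alpha((G-u)-e)$. Lemma~\ref{lemma4} applies to both graphs and gives
\[
\chi_\rho(G)=|V(G)|-\alpha(G-u)+1,\qquad \chi_\rho(G-e)=|V(G)|-\alpha((G-u)-e)+1 .
\]
Since $G$ is $\chi_\rho$-critical, $\chi_\rho(G-e)<\chi_\rho(G)$, and therefore $\alpha((G-u)-e)>\alpha(G-u)$.

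Finally, write $G-u$ as the disjoint union of its components $C_1,\dots,C_t$, so that $\alpha(G-u)=\sum_i\alpha(C_i)$. If $e$ lies in $C_j$, deleting $e$ changes only the term for $C_j$, and hence $\alpha(C_j-e)>\alpha(C_j)$. As $e$ was an arbitrary edge, every component is $\alpha$-critical. Components without edges are vacuously $\alpha$-critical; in fact they cannot occur, since by Proposition~\ref{pro14} $G$ has no leaf, so $G-u$ has no isolated vertex.

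The only delicate step is justifying $\alpha(G)=\alpha(G-u)$ and $\alpha(G-e)=\alpha((G-u)-e)$. This requires $\alpha\ge 2$ in both graphs, which holds because neither $G$ nor $G-e$ is complete. The rest is bookkeeping.
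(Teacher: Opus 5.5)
Your proof is correct and follows essentially the paper's argument. You apply Lemma~\ref{lemma4} to both $G$ and $G-e$ for an edge $e$ of $G-u$ (the universal vertex $u$ keeps the diameter at $2$) and localize the change in $\alpha$ to the component containing $e$; you argue directly where the paper argues by contradiction, and you justify $\alpha(G)=\alpha(G-u)$, $\alpha(G-e)=\alpha((G-u)-e)$ and the diameter-$1$ case somewhat more explicitly.
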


\proof Let $G$ be a $\chi_\rho$-critical graph with at least three vertices and $\mathrm{rad}(G) = 1$.  If $\mathrm{diam}(G) = 1$, then it is clear by Observation \ref{obs:complete}.

Let  $\mathrm{diam}(G) = 2$ and $u$ be a universal vertex of $G$. 
Since $G$ is $\chi_\rho$-critical, it follows from Proposition \ref{pro14} that $G$ has no leaf. Then every connected component of $G - u$ contains at least two vertices, and therefore each component contains at least one edge.
Let $A_1, A_2, \dots, A_r$ be the connected components of $G-u$, where $r \geq 1$. 
Suppose to the contrary that there exists a connected component of $G-u$, say $A_1$, that is not $\alpha$-critical. This implies that there exists an edge $e \in E\left(A_1\right)$ such that $\alpha\left(A_1-e\right)=\alpha\left(A_1\right)$. In this case, $\alpha(G-e)=\alpha(A_1-e)+\sum_{i=2}^{r} \alpha(A_i)=\sum_{i=1}^{r} \alpha(A_i)$ and $\alpha(G)=\sum_{i=1}^{r} \alpha(A_i)$.
Since $e \in E(A_1)$, the diameter of $G-e$ remains $2$. Applying  Lemma \ref{lemma4}, we obtain the following contradiction.
\begin{align*}
\chi_\rho(G-e)&=|V(G-e)| - \alpha(G-e)  + 1
\\ &=|V(G)| - \sum_{i=1}^{r} \alpha(A_i) + 1 \\ &=|V(G)| - \alpha(G)  + 1\\ &=\chi_\rho(G).
\end{align*}
Thus, every connected component of $G-u$ is $\alpha$-critical. 
\qed

\begin{remark}
Let $G$ be a graph with $\mathrm{rad}(G)=1$ and $\mathrm{diam}(G)=2$. Even if $G-u$ is a disjoint union of $\alpha$-critical graphs for every universal vertex $u$ of $G$, $G$ need not be $\chi_\rho$-critical.
\end{remark}

For instance, consider the wheel graph $W_6$. Let $u$ be the universal vertex of $W_6$. Then $W_6-u \cong C_5$, which is $\alpha$-critical.
However, $W_6$ is not $\chi_\rho$-critical, since there exists an edge $e$ such that $\chi_\rho(W_6-e)=\chi_\rho(W_6)=5$ (see Figure~\ref{fig3}).

\begin{figure}[H]
\centering
\begin{tikzpicture}[scale=2,
vertex/.style={circle, draw=black, fill=white, thick, minimum size=4pt, inner sep=2pt}]
\foreach \i in {1,...,5}
\node[vertex] (P\i) at (90+72*\i:1) {};
		
% Center vertex
\node[vertex] (C) at (0,0) {};
		
% Edges of the outer cycle
\foreach \i [evaluate=\i as \j using {int(mod(\i,5)+1)}] in {1,...,5}
\draw[black, thick] (P\i) -- (P\j);
		
% Spokes to the center
\foreach \i in {1,...,5}
\draw[black, thick] (C) -- (P\i);
		
% Example edge e
\path (C) -- (P3) coordinate[pos=0.5] (mid);
\node at ([xshift=1pt, yshift=3pt] mid) {$e$};
\node at ([xshift=-4.5pt, yshift=16pt] mid) {$u$};
\end{tikzpicture}
\caption{The wheel graph $W_6$.}
\label{fig3}
\end{figure}

In Lemma \ref{thm:iff-universal-cut}, we characterize $\chi_\rho$-critical graphs with radius $1$ and diameter $2$ that have a universal cut vertex, \ie, a cut vertex adjacent to every other vertex.
Our proof will use a corollary of the next result due to Haynes et al.~\cite{haynes-1990}.

\begin{lemma}~{\rm \cite{haynes-1990}}\label{teo2}
A graph $G$ has $\alpha(G) \neq \alpha(G-e)$ for all  $e \in E$  if and only if for each edge $e=u v$ there exists a maximum independent set $I$ where $u \in I, v \notin I$ and $u$ is the only neighbor of $v$ in $I$.
\end{lemma}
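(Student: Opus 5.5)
The plan is to prove both directions directly from the definitions, using one elementary fact. Deleting an edge never destroys an independent set, so $\alpha(G-e)\ge \alpha(G)$ for every $e\in E(G)$. Moreover, an independent set of $G-e$ that is not independent in $G$ must contain both ends of $e$. Hence $\alpha(G-e)\in\{\alpha(G),\alpha(G)+1\}$. For an edge $e=uv$, the condition $\alpha(G)\neq\alpha(G-e)$ is therefore equivalent to the existence of an independent set $J$ of $G-e$ with $|J|=\alpha(G)+1$ and $u,v\in J$. I would state this reformulation first, since it converts the statement into a correspondence between such sets $J$ and the sets $I$ described in Lemma~\ref{teo2}.

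For the forward direction, fix an edge $e=uv$ and take such a set $J$. Put $I=J\setminus\{v\}$.
\begin{itemize}
\item $I$ contains $u$ but not $v$, so $I$ no longer contains both ends of $e$. Hence $I$ is independent in $G$.
\item $|I|=\alpha(G)$, so $I$ is a maximum independent set of $G$.
\item By construction $u\in I$ and $v\notin I$.
\item Suppose some $w\in I$ with $w\neq u$ were adjacent to $v$ in $G$. Since $vw\neq e$, the edge $vw$ is still present in $G-e$, and both $v,w\in J$. This contradicts the independence of $J$ in $G-e$. So $u$ is the only neighbor of $v$ in $I$.
\end{itemize}

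For the converse, fix $e=uv$ and take a maximum independent set $I$ with $u\in I$, $v\notin I$, and $N_G(v)\cap I=\{u\}$. Consider $J=I\cup\{v\}$ in $G-e$. The only neighbor of $v$ in $I$ is $u$, and the edge $uv$ has been removed, so $J$ is independent in $G-e$. Since $|J|=\alpha(G)+1$, we get $\alpha(G-e)>\alpha(G)$. Applying this to every edge gives the claim.

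The only point needing care is the orientation of the edge. Lemma~\ref{teo2} is stated for $e=uv$ with $u\in I$, and the forward construction works for whichever end we choose to delete from $J$. So the labelling of the ends of $e$ is harmless in both directions. I do not expect a genuine obstacle here; the main step is the observation that $\alpha$ changes by at most one when an edge is deleted, and that any new independent set must contain both ends of that edge.
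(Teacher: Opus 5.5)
Your proof is correct and complete. The paper does not prove this lemma (it is quoted from Haynes et al.), but your argument is the standard one and matches the reasoning the paper uses in the proof of Lemma~\ref{lem:rad3}: $\alpha(G-e)\le\alpha(G)+1$, any larger independent set of $G-e$ must contain both ends of $e$, and deleting one end gives a maximum independent set of $G$. Keep your remark that either end may be deleted from $J$: it yields the statement for both orientations of each edge, which is what Corollary~\ref{cor:haynes} needs to produce a maximum independent set avoiding an arbitrary non-isolated vertex.
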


Lemma \ref{teo2} implies the following result.

\begin{corollary}\label{cor:haynes}
Let $G$ be an $\alpha$-critical graph with no isolated vertices. Then, there exists a maximum independent set $I$ of $G$ such that $v \notin I$ for every vertex $v \in V(G)$.
\end{corollary}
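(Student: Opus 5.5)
Read the statement as: for every vertex $v\in V(G)$ there is a maximum independent set $I$ of $G$ (depending on $v$) with $v\notin I$. (Read literally, with a single $I$ for all $v$, it would force $I=\emptyset$, which is absurd.) The plan is to use the fact that $v$ is not isolated to pick an edge at $v$, and then use $\alpha$-criticality at that edge to produce the required set.

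Fix $v\in V(G)$. Since $G$ has no isolated vertices, $v$ has a neighbor $w$; let $e=vw$. Lemma~\ref{teo2} applied to $e$ gives a maximum independent set containing exactly one endpoint of $e$. However, the lemma does not let us choose \emph{which} endpoint is excluded. So I would avoid relying on the orientation and argue directly from $\alpha$-criticality, which is also self-contained.

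Since $G$ is $\alpha$-critical, $\alpha(G-e)>\alpha(G)$. Deleting an edge can raise $\alpha$ by at most one, so let $J$ be an independent set of $G-e$ with $|J|=\alpha(G)+1$. Then $J$ cannot be independent in $G$, because that would exceed $\alpha(G)$. The only edge of $G$ missing from $G-e$ is $e$, so $J$ contains both $v$ and $w$. Now set $I=J\setminus\{v\}$. It is independent in $G$, since it no longer contains both ends of $e$, and all other pairs were already non-adjacent in $G$. It has size $\alpha(G)$, so it is a maximum independent set of $G$, and $v\notin I$ by construction. This gives the claim, and as a bonus $w\in I$ with $w$ the only neighbor of $v$ in $I$, matching the form in Lemma~\ref{teo2}.

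The only delicate point is the orientation issue in invoking Lemma~\ref{teo2} directly. The direct argument sidesteps it, so I expect no real obstacle. The one thing to check is that the hypothesis of no isolated vertices is exactly what supplies the edge $e$ at $v$.
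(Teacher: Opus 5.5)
Your proof is correct, and your reading of the quantifiers (for each $v$, some maximum independent set avoiding $v$) is how the paper uses the corollary later, in Case~2 of Lemma~\ref{thm:iff-universal-cut}, where it takes separate sets $I_1$ and $I_2$ with $x\notin I_1$ and $y\notin I_2$. The difference from the paper lies in where the work is done. The paper gives no separate argument. It treats the corollary as an immediate consequence of the forward direction of the Haynes et al.\ characterization (Lemma~\ref{teo2}): take an edge at $v$, which exists because $v$ is not isolated, label it so that $v$ is the endpoint left out of $I$, and read off the set. Your orientation concern is not a real obstacle there. An edge is unordered, and the condition in Lemma~\ref{teo2} is meant for either labelling; for $\alpha$-critical graphs both orientations do hold, by exactly your argument. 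You instead argue directly from the definition of $\alpha$-criticality: an independent set $J$ of $G-e$ of size $\alpha(G)+1$ must contain both ends of $e$, so $J\setminus\{v\}$ is the required set. This is the same computation the paper itself carries out for the edge $ab$ in the proof of Lemma~\ref{lem:rad3}. The paper's version is a one-liner resting on a cited result whose phrasing leaves the orientation implicit. Yours is self-contained and makes the orientation explicit. It also yields both orientations at once (remove $v$ or $w$ from $J$), together with the ``only neighbour'' property. One small simplification: you do not need the fact that deleting an edge raises $\alpha$ by at most one, since any independent set of $G-e$ larger than $\alpha(G)$ contains one of size exactly $\alpha(G)+1$.
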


\begin{lemma}\label{thm:iff-universal-cut}
Let $G$ be a graph with $\mathrm{rad}(G) = 1$  and  $\mathrm{diam}(G) = 2$, and let $u$ be a universal cut vertex of $G$. 
Then $G$ is $\chi_\rho$-critical if and only if $G-u$ is a disjoint union of $\alpha$-critical graphs. 
\end{lemma}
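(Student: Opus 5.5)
The forward direction is exactly Proposition~\ref{cor1}, applied to the universal vertex $u$. All the work is in the converse. Assume $G-u$ is a disjoint union of $\alpha$-critical graphs $A_1,\dots,A_r$. Since $u$ is a cut vertex, $r\ge 2$. It is enough to show $\chi_\rho(G-e)<\chi_\rho(G)$ for every edge $e$, because $G$ is connected and has no isolated vertices, so edge-criticality is equivalent to criticality. By Lemma~\ref{lemma4}, $\chi_\rho(G)=|V(G)|-\alpha(G)+1$. Moreover $\alpha(G)=\sum_i\alpha(A_i)$, since any independent set containing $u$ has size $1$, which is at most this sum.

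I split into two cases according to the position of $e$.

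\emph{Case 1: $e\in E(A_j)$.} Then $u$ is still universal in $G-e$, so $G-e$ has diameter $2$; it is not complete because $r\ge 2$. The graph $A_j$ has an edge, so it is $\alpha$-critical, and hence $\alpha(A_j-e)=\alpha(A_j)+1$. This gives $\alpha(G-e)=\alpha(G)+1$. Lemma~\ref{lemma4} then yields $\chi_\rho(G-e)=\chi_\rho(G)-1$.

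\emph{Case 2: $e=uv$ with $v\in A_j$.} Here $G-e$ may have diameter $3$, so Lemma~\ref{lemma4} only gives the upper bound $\chi_\rho(G-e)\le |V(G)|-\alpha(G-e)+1$, and $\alpha(G-e)$ may well equal $\alpha(G)$. Instead I would build an explicit packing coloring of $G-e$ with $|V(G)|-\alpha(G)$ colors.

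First, the degenerate case: if $A_j$ were a single vertex $v$, then $v$ would be isolated. In fact such components are allowed, since $K_1$ is vacuously $\alpha$-critical; if I need to exclude them, I would treat a leaf separately by the argument of Proposition~\ref{pro14} in reverse.

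When $A_j$ has an edge, Corollary~\ref{cor:haynes} gives a maximum independent set $I_j$ of $A_j$ with $v\notin I_j$. For each $i\ne j$ choose any maximum independent set $I_i$ of $A_i$, and put $I=\bigcup_i I_i$. The coloring is then as follows:
\begin{itemize}
\item color $I\cup\{v\}$ with color $1$;
\item give all remaining vertices, including $u$, distinct colors $2,3,\dots$.
\end{itemize}
Color $1$ is valid because $v\notin I_j$ and $v$ has no neighbour in $I$. Indeed, $v$'s neighbours in $G-e$ lie in $A_j$ (its edge to $u$ is gone), so I must choose $I_j$ containing no neighbour of $v$ either.

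This gives a coloring with $|V(G)|-\alpha(G)$ colors, but the step needs more than Corollary~\ref{cor:haynes} provides. I would try to get it from Lemma~\ref{teo2}, applied to an edge $vw$ of $A_j$: it yields a maximum independent set $I_j$ with $w\in I_j$, $v\notin I_j$, and $w$ the only neighbour of $v$ in $I_j$. Then $v$ receives color $1$ and $w$ is recolored with a fresh color, which keeps the count the same. I expect this is where the main obstacle lies.

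An alternative route is to give $v$ color $1$ together with $I\setminus N(v)$. The net count is then $|V(G)|-\alpha(G)+1-1+(|I\cap N(v)|-1)$, which is fine exactly when $|I\cap N(v)|=1$, and that is what Lemma~\ref{teo2} supplies. Larger colors only need to be distinct, and since all remaining vertices receive distinct colors, every class is trivially an $i$-packing. So $\chi_\rho(G-e)\le|V(G)|-\alpha(G)<\chi_\rho(G)$ in this case too.
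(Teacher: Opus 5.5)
Your forward direction and Case~1 are fine and agree with the paper. The gap is in Case~2, $e=uv$ with $v\in A_j$, and it is not a technicality: in that case the colouring must repeat a colour other than $1$. In $G-e$ the vertex $u$ is adjacent to everything except $v$, so any independent set containing $u$ lies inside $\{u,v\}$. Hence $\alpha(G-e)=\max\{2,\alpha(G)\}=\alpha(G)$, since $\alpha(G)\ge r\ge 2$. Consequently, any packing colouring of $G-e$ in which only colour $1$ is used more than once has $|X_1|\le\alpha(G)$, and so needs at least $1+(|V(G)|-\alpha(G))=\chi_\rho(G)$ colours.

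Your first scheme asks for a maximum independent set $I_j$ of $A_j$ with $v\notin I_j$ and $N(v)\cap I_j=\emptyset$. This is impossible, because $I_j\cup\{v\}$ would then be a larger independent set. Your repair via Lemma~\ref{teo2} only swaps $w$ out and $v$ in, so colour class $1$ still has $\alpha(G)$ vertices. The correct count for the colouring with $(I\setminus N(v))\cup\{v\}$ in colour $1$ is $|V(G)|-\alpha(G)+|I\cap N(v)|$, not $|V(G)|-\alpha(G)+|I\cap N(v)|-1$. So with $|I\cap N(v)|=1$ you get exactly $\chi_\rho(G)$ colours. For instance, for the friendship graph $T_2$ (two triangles sharing $u$), your colouring of $T_2-e$ uses $4=\chi_\rho(T_2)$ colours (Observation~\ref{lemma5}), whereas $3$ suffice.

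The missing idea is to exploit that $G-e$ has diameter $3$. Since $r\ge 2$, pick $y$ in another component $A_{j'}$. Every neighbour of $v$ in $G-e$ lies in $A_j$ and is not adjacent to $y$, while $v-w-u-y$ is a path for any neighbour $w$ of $v$ in $A_j$. So $d_{G-e}(v,y)=3$, and $v$ and $y$ may both receive colour $2$. Apply Corollary~\ref{cor:haynes} in $A_j$ and in $A_{j'}$ to get maximum independent sets avoiding $v$ and $y$, respectively. Colour their union, together with maximum independent sets of the other components, by $1$; colour $v$ and $y$ by $2$; and give the remaining $|V(G)|-\alpha(G)-2$ vertices distinct colours. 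This uses $|V(G)|-\alpha(G)<\chi_\rho(G)$ colours, and it is the paper's argument.

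Finally, $K_1$ components cannot be handled by running Proposition~\ref{pro14} in reverse. A component $\{v\}$ makes $v$ a leaf, and Proposition~\ref{pro14} says $G$ is then not $\chi_\rho$-critical (for example $P_3$). So the statement must be read with every component of $G-u$ containing an edge, which is exactly what Corollary~\ref{cor:haynes} requires.
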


\proof	Let $G$ be a graph with $\mathrm{rad}(G) = 1$  and  $\mathrm{diam}(G) = 2$, and let $u$ be a universal cut vertex of $G$. Let further $G$ be a $\chi_\rho$-critical graph. By Proposition \ref{cor1}, $G-u$ is a disjoint union of $\alpha$-critical graphs. 

Conversely, suppose that $G - u$ is a disjoint union of $\alpha$-critical graphs $A_1, \dots, A_r$, where $r \ge 2$. There are two types of edges in $G$: edges with both endpoints in the same connected component of $G-u$, and edges connecting a vertex of some connected component of $G-u$ to $u$.
	
\textbf{Case 1:} Let $e \in E(A_i)$ for some $i \in \{1,\dots,r\}$. Without loss of generality, assume that \( i = 1 \), \ie, \( e \in E(A_1) \).
Since $u$ is a universal vertex in $G-e$, the diameter of $G-e$ remains 2. Additionally, since $A_1$ is $\alpha$-critical, removing $e$ increases $\alpha\left(A_1\right)$. By Lemma \ref{lemma4}, we have:
\begin{align*}
\chi_\rho(G-e)&=|V(G-e)| - \alpha(G-e)  + 1 \\&=|V(G)|-\alpha(A_1-e)-\sum_{j=2}^{r} \alpha(A_j) + 1
\\ &<|V(G)| - \sum_{j=1}^{r} \alpha(A_j) + 1 \\ &=|V(G)| - \alpha(G)  + 1\\ &=\chi_\rho(G).
\end{align*}

Therefore, $\chi_\rho(G-e) < \chi_\rho(G)$ for every edge $e$ in $G-u$.

\textbf{Case 2:} Let \( e = ux \), where \( x \in A_i \)  for some $i \in \{1,\dots,r\}$. In this case, the diameter of \( G - e \) becomes 3, since for any vertex \( y \in A_j \) with \( j \neq i \), the distance between \( x \) and \( y \) is 3 in \( G - e \). Relabel the components so that $x \in A_1$ and $y \in A_2$. We know that $\alpha(G-e) \geq \alpha(G)$.
We first consider the case where $\alpha(G-e)>\alpha(G)$. By Lemma \ref{lemma4}, we have: 
\begin{align*}
\chi_\rho(G-e)&\leq|V(G)|-\alpha(G-e)+1 \\&<|V(G)|-\alpha(G)+1 \\ &=\chi_\rho(G).
\end{align*}
	
We then consider \( \alpha(G - e) = \alpha(G) \). By Corollary \ref{cor:haynes}, there exist maximum independent sets \( I_1 \) in \( A_1 \) and \( I_2 \) in \( A_2 \) such that \( x \notin I_1 \) and \( y \notin I_2 \). $I_1 \cup I_2$ together with maximum inpendent sets from $A_k$ ($k\geq3$) form an \( \alpha(G - e) \)-set $I$ that excludes both \( x \) and \( y \).
Then in graph $G-e$, color $1$ is assigned to every vertex in $I$, whereas color $2$ is assigned to both $x$ and $y$, and distinct colors are assigned to the remaining $|V(G)|-\alpha(G)-2$ vertices. Since $\alpha(G-e)=\alpha(G)$, we obtain a packing coloring of $G-e$ with $|V(G)|-\alpha(G)$ colors. Thus, we have:
\begin{align*}
\chi_\rho(G-e)&\le|V(G)|-\alpha(G) \\&<|V(G)|-\alpha(G)+1 \\ &=\chi_\rho(G). 
\end{align*}
Therefore, $\chi_\rho(G-e) < \chi_\rho(G)$ for every edge $e=ux$ with $x \in G-u$. Since this holds for every edge $e \in E(G)$, $G$ is $\chi_\rho$-critical.
\qed

Lemma \ref{thm:iff-universal-cut} provides the characterization for the case where $G-u$ is disconnected. Accordingly, we now complete the characterization by considering the case where $G-u$ is connected in Lemma \ref{thm:iff-universal}. Our proof will use the next result. 

\begin{lemma}\label{lem:rad3}
If $G$ is an $\alpha$-critical graph with $\operatorname{rad}(G)\ge 3$, then for every two vertices $x$ and $y$ that are at distance $3$ from each other, there exists a maximum independent set that contains neither $x$ nor $y$.
\end{lemma}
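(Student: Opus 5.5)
The plan is to apply Lemma~\ref{teo2} to the middle edge of a shortest $x$--$y$ path. This should give the required maximum independent set directly. The hypothesis $\operatorname{rad}(G)\ge 3$ seems to serve only to guarantee that pairs at distance $3$ exist, so the argument will use only $d_G(x,y)=3$ and $\alpha$-criticality.

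First, fix $x,y$ with $d_G(x,y)=3$ and choose a shortest path $x\,a\,b\,y$. Because the path is shortest, the four vertices are distinct, $x$ is adjacent to $a$, and $y$ is adjacent to $b$. Since $G$ is $\alpha$-critical, $\alpha(G-e)\neq\alpha(G)$ for every edge $e$. So Lemma~\ref{teo2} applies to the edge $e=ab$, oriented with $a$ in the role of $u$ and $b$ in the role of $v$. It gives a maximum independent set $I$ with $a\in I$, $b\notin I$, and $a$ the only neighbor of $b$ lying in $I$.

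Second, check that $I$ avoids $x$ and $y$. Since $a\in I$ and $xa\in E(G)$, independence of $I$ forces $x\notin I$. Since $y$ is a neighbor of $b$ with $y\neq a$, and $a$ is the only neighbor of $b$ in $I$, we get $y\notin I$. Thus $I$ is a maximum independent set containing neither $x$ nor $y$, which proves Lemma~\ref{lem:rad3}.

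No step here looks like a real obstacle. The only point needing care is the choice of edge and its orientation in Lemma~\ref{teo2}. Taking the middle edge $ab$, with the vertex placed in $I$ being the neighbor of $x$, is exactly what excludes both endpoints at once: $x$ is excluded by adjacency to a vertex of $I$, and $y$ by the uniqueness clause. The endpoint edges $xa$ or $by$ would not work, since they would put $x$ or $y$ itself into $I$ or leave the other endpoint unconstrained.
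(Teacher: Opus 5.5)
Your proof is correct and is essentially the paper's argument. The paper also uses the middle edge $ab$ of a shortest path $x\,a\,b\,y$, but instead of citing Lemma~\ref{teo2} it re-derives the needed set directly: a maximum independent set of $G-ab$ has size $\alpha(G)+1$, so it must contain both $a$ and $b$, and deleting $a$ gives the required set (your remark that $\operatorname{rad}(G)\ge 3$ is never really used is also right, since the paper only uses it to produce some $y$ at distance $3$ from a given $x$).
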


\proof 
Let $x \in V(G)$. Since $\operatorname{rad}(G)\ge 3$, we have $\varepsilon_G(x)\ge 3$. Then there exists a vertex $z \in V(G)$ such that $d_G(x,z)\geq3$. Let $x-a-b-y$ be the first four vertices on a shortest $x$--$z$ path. Then $d_G(x,y)=3$. Note that $y=z$ if $\varepsilon_G(x)=3$.
Let $e=ab$. It is well known that for any graph $G$ and edge $e \in E(G)$,
$\alpha(G) \le \alpha(G-e) \le \alpha(G)+1.$ Since $G$ is $\alpha$-critical, $\alpha(G-e)=\alpha(G)+1$.
Let $I$ be a maximum independent set of $G-e$. Then $|I|=\alpha(G)+1$.
We claim that $a,b \in I$. If $a \notin I$, then $I$ is also independent in $G$, since it does not contain both endpoints of $e$, contradiction. Therefore, $a \in I$. By symmetry $b \in I$.
Since $x$ is adjacent to $a$ and $y$ is adjacent to $b$, we have $x,y \notin I$. Now let $I' = I \setminus \{a\}$.
Then $I'$ is an independent set of $G$ with size $\alpha(G)$. Thus, $I'$ is a maximum independent set of $G$ containing neither $x$ nor $y$.
\qed

\begin{lemma}\label{thm:iff-universal}
Let $G$ be a graph with $\mathrm{rad}(G) = 1$  and  $\mathrm{diam}(G) = 2$, and let $u$ be a universal vertex such that $G-u$ is connected. 
Then $G$ is $\chi_\rho$-critical if and only if $G-u$ is $\alpha$-critical and  $\operatorname{rad}(G-u) \geq 3$. 
\end{lemma}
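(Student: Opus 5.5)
The plan is to argue both directions through Lemma~\ref{lemma4}, in the same spirit as Lemma~\ref{thm:iff-universal-cut}. Two preliminary facts will be used throughout. First, $\alpha(G)=\alpha(G-u)\ge 2$: since $\operatorname{diam}(G)=2$, $G$ is not complete, so $G-u$ is not complete; moreover no independent set of size at least $2$ can contain the universal vertex $u$. Second, $\chi_\rho(G)=|V(G)|-\alpha(G)+1$ by Lemma~\ref{lemma4}.

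\textbf{Necessity.} Assume $G$ is $\chi_\rho$-critical. By Proposition~\ref{cor1}, and since $G-u$ is connected, $G-u$ is $\alpha$-critical. It remains to show $\operatorname{rad}(G-u)\ge 3$. Suppose instead that some vertex $x$ has $\varepsilon_{G-u}(x)\le 2$, and let $e=ux$.

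In $G-e$, the vertex $x$ still reaches every vertex of $G-u$ within distance $2$ inside $G-u$. It reaches $u$ through any neighbour in $G-u$, which exists because $G-u$ is connected with at least two vertices. All other pairs remain at distance at most $2$ via $u$. Hence $\operatorname{diam}(G-e)=2$.

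Every independent set of $G-e$ either lies in $G-u$ or is contained in $\{u,x\}$. Therefore $\alpha(G-e)=\max(\alpha(G-u),2)=\alpha(G)$. Lemma~\ref{lemma4} then gives $\chi_\rho(G-e)=|V(G)|-\alpha(G)+1=\chi_\rho(G)$, contradicting criticality.

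\textbf{Sufficiency.} Assume $G-u$ is $\alpha$-critical and $\operatorname{rad}(G-u)\ge 3$. We check every edge $e$.

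If $e\in E(G-u)$, argue exactly as in Case~1 of Lemma~\ref{thm:iff-universal-cut}. The vertex $u$ is still universal, so $\operatorname{diam}(G-e)=2$, while $\alpha(G-e)=\alpha(G-u-e)>\alpha(G)$. Lemma~\ref{lemma4} then gives $\chi_\rho(G-e)<\chi_\rho(G)$.

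If $e=ux$ and $\alpha(G-e)>\alpha(G)$, the general upper bound in Lemma~\ref{lemma4} already yields $\chi_\rho(G-e)<\chi_\rho(G)$.

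So assume $e=ux$ and $\alpha(G-e)=\alpha(G)$. Since $\varepsilon_{G-u}(x)\ge 3$, choose a vertex $y$ with $d_{G-u}(x,y)=3$: take the fourth vertex on a shortest path in $G-u$ from $x$ to a vertex at distance at least $3$, as in the proof of Lemma~\ref{lem:rad3}. Lemma~\ref{lem:rad3} then supplies a maximum independent set $I$ of $G-u$ containing neither $x$ nor $y$, with $|I|=\alpha(G)$.

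Next we need $d_{G-e}(x,y)=3$. Any $x$--$y$ path in $G-e$ either stays in $G-u$, where it has length at least $3$, or passes through $u$. A path through $u$ needs at least one step from $x$ to a neighbour $w$, then $w u$, then $u y$, so it also has length at least $3$.

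Now colour $G-e$ as follows:
\begin{itemize}
\item every vertex of $I$ gets colour $1$, which is valid since $I$ is independent in $G-e$;
\item both $x$ and $y$ get colour $2$, which is valid since their distance is $3>2$;
\item each of the remaining $|V(G)|-\alpha(G)-2$ vertices (including $u$) gets its own new colour.
\end{itemize}
This is a packing colouring with $|V(G)|-\alpha(G)<\chi_\rho(G)$ colours. Hence every edge is critical.

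\textbf{Main obstacle.} The delicate step is the edge $ux$ in the sufficiency direction. One must confirm that deleting $ux$ really pushes $x$ to distance $3$ from a suitable $y$, which needs the distance argument through $u$ above. One must also produce a single maximum independent set avoiding both $x$ and $y$. This is exactly why the hypothesis $\operatorname{rad}(G-u)\ge 3$ and Lemma~\ref{lem:rad3} are needed, rather than Corollary~\ref{cor:haynes}, which handles only one vertex at a time.
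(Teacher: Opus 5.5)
Your proof is correct and follows essentially the same route as the paper. In both, Lemma~\ref{lemma4} handles the edges inside $G-u$, the observation that an independent set of $G-ux$ containing $u$ is contained in $\{u,x\}$ forces $\operatorname{rad}(G-u)\ge 3$, and Lemma~\ref{lem:rad3} together with a $(|V(G)|-\alpha(G))$-colouring handles the edges $ux$. Your version is also slightly tighter in two places: you choose $y$ at distance exactly $3$ in $G-u$ (the paper only takes $d_{G-u}(x,y)\ge 3$, while Lemma~\ref{lem:rad3} is stated for distance exactly $3$), and you explicitly prove $d_{G-ux}(x,y)\ge 3$.
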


\proof	Let $G$ be a graph with $\mathrm{rad}(G) = 1$  and  $\mathrm{diam}(G) = 2$, and let $u$ be a universal vertex of $G$. First suppose that $G$ is $\chi_\rho$-critical.
	
We first show that $G-u$ is $\alpha$-critical. Let $e \in E(G-u)$. Since $u$ remains universal in $G-e$, we have $\operatorname{diam}(G-e)=2$. Then by Lemma~\ref{lemma4},
$\chi_\rho(G-e)=|V(G)|-\alpha(G-e)+1$ and $\chi_\rho(G)=|V(G)|-\alpha(G)+1$.
Since $G$ is $\chi_\rho$-critical, $\chi_\rho(G-e)<\chi_\rho(G)$ and therefore $\alpha(G-e)>\alpha(G).$
Because $u$ is universal and $\mathrm{diam}(G) = 2$, no maximum independent set of $G$ contains $u$, so $\alpha(G)=\alpha(G-u)$. Similarly, $\alpha(G-e)=\alpha((G-u)-e).$
Thus, $\alpha((G-u)-e)>\alpha(G-u)$ for every edge $e \in E(G-u)$ and hence, $G-u$ is $\alpha$-critical.

Next we show that $\operatorname{rad}(G-u) \geq 3$, \ie, every vertex of $G-u$ has eccentricity at least $3$. Let $x \in V(G-u)$ and suppose to the contrary that $\varepsilon_{G-u}(x)\le 2$. Consider $e=ux$. We claim that $\operatorname{diam}(G-e)=2$.
In $G-e$, any two vertices distinct from $x$ are at distance at most $2$, since $u$ is adjacent to all such vertices. Moreover, for every $y \in V(G-u)$, we have $d_{G-e}(x,y) \le 2$ by assumption. Therefore, $\operatorname{diam}(G-e) \le 2$. Since $u$ and $x$ are nonadjacent in $G-e$, it follows that $\operatorname{diam}(G-e)=2$.
Then by Lemma~\ref{lemma4},
$\chi_\rho(G-e)=|V(G)|-\alpha(G-e)+1.$
Since $G$ is $\chi_\rho$-critical, it follows that $\alpha(G-e)>\alpha(G)$.
Let $I$ be a maximum independent set of $G-e$. If $u \notin I$, then $I$ is also independent in $G$, \ie, $\alpha(G)=\alpha(G-e)$, contradiction. Next assume $u \in I$. Since $u$ is adjacent to every vertex of $G-e$ except $x$, we obtain $I=\{u,x\}$, \ie, $\alpha(G-e)=2$ and so $\alpha(G)=1$, contradicting $\operatorname{diam}(G)=2$.
Therefore, $\operatorname{rad}(G-u) \geq 3$.

Conversely, suppose that $G-u$ is $\alpha$-critical and $\operatorname{rad}(G-u) \geq 3$. We show that $G$ is $\chi_\rho$-critical. There are two types of edges in $G$. Every edge of $G$ is either contained in $G-u$ or incident to $u$. 

\textbf{Case 1:} First let $e \in E(G-u)$. Since $u$ is a universal vertex in $G-e$, we have $\operatorname{diam}(G-e)=2$. Since $G-u$ is $\alpha$-critical,
$\alpha((G-u)-e)>\alpha(G-u).$
As no maximum independent set contains $u$ in $G$ and $G-e$, we have $\alpha(G)=\alpha(G-u)$ and $\alpha(G-e)=\alpha((G-u)-e)$, yielding
$\alpha(G-e)>\alpha(G).$ Then
by Lemma~\ref{lemma4},
$\chi_\rho(G-e)=|V(G)|-\alpha(G-e)+1<|V(G)|-\alpha(G)+1=\chi_\rho(G).$ Therefore, $\chi_\rho(G-e)<\chi_\rho(G)$ for any $e \in E(G-u)$. 

\textbf{Case 2:} For the remaining case let $e=ux$ for some $x \in V(G-u)$. Since $\operatorname{rad}(G-u)\ge 3$, there exists a vertex $y \in V(G-u)$ such that $d_{G-u}(x,y)\ge 3$. Since $x$ has a neighbor that is adjacent to $u$ and $y$ is adjacent to $u$ in $G-e$, we have $d_{G-e}(x,y)=3$. We know that $\alpha(G-e) \geq \alpha(G)$.
We first consider $\alpha(G-e)>\alpha(G)$.
By Lemma~\ref{lemma4}, we have:  
\begin{align*}
		\chi_\rho(G-e)&\leq|V(G)|-\alpha(G-e)+1 \\&<|V(G)|-\alpha(G)+1 \\ &=\chi_\rho(G).
\end{align*}
We then consider the case where \( \alpha(G - e) = \alpha(G) \). Since $G-u$ is $\alpha$-critical and $\operatorname{rad}(G-u)\ge 3$, by Lemma \ref{lem:rad3} there exists a maximum independent set $I$ of $G-u$ such that $x, y \notin I$. Then $I$ is also a maximum independent set of $G-e$ since $\alpha(G-u)=\alpha(G)$. 
Then in graph $G-e$, color $1$ is assigned to every vertex in $I$, color $2$ is assigned to both $x$ and $y$, and distinct colors are assigned to the remaining $|V(G)|-\alpha(G)-2$ vertices. Since $\alpha(G-e)=\alpha(G)$, we obtain a packing coloring of $G-e$ with $|V(G)|-\alpha(G)$ colors. Thus, we have:
	\begin{align*}
		\chi_\rho(G-e)&\le|V(G)|-\alpha(G) \\&<|V(G)|-\alpha(G)+1 \\ &=\chi_\rho(G). 
	\end{align*}
Therefore, $\chi_\rho(G-e) < \chi_\rho(G)$ for every edge $e=ux$ with $x \in G-u$. 

From Cases 1 and 2, it follows that $\chi_\rho(G-e) < \chi_\rho(G)$ for every edge $e \in E(G)$. Therefore, $G$ is $\chi_\rho$-critical.
\qed

Our main result in this section describes the structure of $\chi_\rho$-critical graphs with radius 1, as follows:

\begin{theorem}\label{thm:12}
Let $G$ be a graph with $\mathrm{rad}(G) = 1$  and  $\mathrm{diam}(G) = 2$, and let $u$ be a universal vertex of $G$. 
Then $G$ is $\chi_\rho$-critical if and only if either
\begin{itemize}
\item[(i)] $G-u$ is connected, $\operatorname{rad}(G-u)\ge 3$, and $G-u$ is $\alpha$-critical, or
\item[(ii)] $G-u$ is disconnected and each component of $G-u$ is $\alpha$-critical.
\end{itemize}
\end{theorem}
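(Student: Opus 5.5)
The theorem follows by combining Lemma~\ref{thm:iff-universal-cut} and Lemma~\ref{thm:iff-universal}, splitting on whether $G-u$ is connected. Since $u$ is universal, $G-u$ is disconnected exactly when $u$ is a cut vertex of $G$, that is, exactly when $u$ is a universal cut vertex. So the two alternatives (i) and (ii) correspond to the hypotheses of the two lemmas, and in each case the equivalence is read off directly.

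First suppose $G-u$ is disconnected. Then $u$ is a universal cut vertex of $G$, and $G$ has radius $1$ and diameter $2$. Lemma~\ref{thm:iff-universal-cut} gives that $G$ is $\chi_\rho$-critical if and only if $G-u$ is a disjoint union of $\alpha$-critical graphs. This means each component of $G-u$ is $\alpha$-critical, which is condition (ii). Condition (i) cannot hold here, because it requires $G-u$ to be connected. Hence in this case $G$ is $\chi_\rho$-critical if and only if (i) or (ii) holds.

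Next suppose $G-u$ is connected. Lemma~\ref{thm:iff-universal} applies verbatim: $G$ is $\chi_\rho$-critical if and only if $G-u$ is $\alpha$-critical and $\operatorname{rad}(G-u)\ge 3$, which is condition (i). Condition (ii) cannot hold, since it requires $G-u$ to be disconnected. So again $G$ is $\chi_\rho$-critical if and only if (i) or (ii) holds.

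The only delicate point is the degenerate possibility of isolated vertices in $G-u$, i.e.\ leaves of $G$. A single-vertex component $K_1$ is vacuously $\alpha$-critical, since it has no edges, so (ii) does not exclude leaves. This is handled inside the proof of Lemma~\ref{thm:iff-universal-cut}. In its converse direction, Corollary~\ref{cor:haynes} is invoked to find a maximum independent set avoiding $x$ or $y$. That corollary assumes no isolated vertices, so it fails for a $K_1$ component. The lemma's stated hypothesis does not exclude $K_1$ components, so I would take it as given in that form. If pressed, I would check that the forward direction (via Proposition~\ref{pro14}) rules out leaves whenever $|V(G)|\ge 3$, and reconcile any edge case there. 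Beyond this, the argument is a direct two-case assembly and I expect no genuine obstacle.
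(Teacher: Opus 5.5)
Your two-case assembly is exactly the paper's proof, which simply cites Lemmas~\ref{thm:iff-universal-cut} and~\ref{thm:iff-universal}, and the connected case is fine. The problem is the point you flagged and then deferred. It cannot be ``taken as given'' or ``reconciled'' via Proposition~\ref{pro14}, because the statement as written, and with it the converse of Lemma~\ref{thm:iff-universal-cut}, is actually false there. This gap is shared with the paper.

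Under the paper's definition, $K_1$ is vacuously $\alpha$-critical, so (ii) holds whenever $G-u$ has isolated vertices alongside $\alpha$-critical components. Two counterexamples:
\begin{itemize}
\item Take $G=K_{1,n}$ with $n\ge 2$, e.g.\ $P_3$. It has radius $1$ and diameter $2$, and $G-u$ consists of $n$ isolated vertices. Yet $\chi_\rho(K_{1,n})=2=\chi_\rho(K_{1,n-1}\cup K_1)$, so $G$ is not $\chi_\rho$-critical.
\item Take $K_3$ with a pendant vertex $v$ attached at $u$. Then $G-u=K_2\cup K_1$ satisfies (ii). But $\chi_\rho(G)=4-2+1=3=\chi_\rho(K_3\cup K_1)=\chi_\rho(G-uv)$.
\end{itemize}
Proposition~\ref{pro14} does not rescue the ``if'' direction. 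It is precisely the argument showing that every such $G$, which has a leaf, fails to be critical. The paper's proof of Lemma~\ref{thm:iff-universal-cut} breaks exactly where you noticed, and the issue is not handled there. In Case~2 with $x$ isolated in $G-u$, the graph $G-ux$ is disconnected rather than of diameter $3$. Also, no maximum independent set of the component $\{x\}$ avoids $x$, so Corollary~\ref{cor:haynes} cannot be invoked.

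The repair is to strengthen (ii) to: each component of $G-u$ is an $\alpha$-critical graph with at least one edge (equivalently, $G$ has no leaf).
\begin{itemize}
\item The forward direction already yields this stronger form. Proposition~\ref{cor1} uses Proposition~\ref{pro14} to rule out single-vertex components, which applies because $|V(G)|\ge 3$ when $\operatorname{diam}(G)=2$.
\item For the converse under the strengthened hypothesis, Corollary~\ref{cor:haynes} applies to every component. Also $x$ has a neighbor in its own component, so $d_{G-ux}(x,y)=3$ for any $y$ in another component. The coloring with $|V(G)|-\alpha(G)$ colors then goes through.
\item Condition (i) needs no change, since a connected $G-u$ with $\operatorname{rad}(G-u)\ge 3$ has no isolated vertices.
\end{itemize}
With this amendment your assembly is a complete proof. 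Without it, neither your argument nor the paper's proves a true statement.
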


\begin{proof}
The result follows from Lemmas~\ref{thm:iff-universal-cut} and~\ref{thm:iff-universal}.
\end{proof}

%%%%%%%%%%%%%%%%%%%%%%%%%%%%%%%%%%%%%%%%%%%%%%%%%
\section{$\chi_{\rho}$-critical cactus graphs of radius $2$ and diameter $2$ or $3$}
\label{sec:rad2}
%%%%%%%%%%%%%%%%%%%%%%%%%%%%%%%%%%%%%%%%%%%%%%%%%

In this section, we give a partial characterization of $\chi_\rho$-critical cactus graphs with $\mathrm{rad}(G)=2$. If $\mathrm{rad}(G)=2$, then $\mathrm{diam}(G)\in\{2,3,4\}$. We provide a complete characterization for the cases $\mathrm{diam}(G)=2$ and $\mathrm{diam}(G)=3$, and leave the case $\mathrm{diam}(G)=4$ open.

The next lemma expresses the relationship between the diameter and the radius of a cycle. 

\begin{observation}\label{lemma1}
$\operatorname{diam}(C_n) = \operatorname{rad}(C_n) = \left\lfloor \frac{n}{2} \right\rfloor$ for any cycle \( C_n \).  
\end{observation}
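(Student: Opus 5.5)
The statement to prove is Observation~\ref{lemma1}: for every cycle $C_n$ ($n\ge 3$), $\operatorname{diam}(C_n)=\operatorname{rad}(C_n)=\lfloor n/2\rfloor$. Since $C_n$ is vertex-transitive, every vertex has the same eccentricity. So it suffices to compute $\varepsilon_{C_n}(v)$ for a single vertex $v$. Radius and diameter are then both equal to this common value, because the minimum and maximum of a constant function coincide.

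To compute the eccentricity, label the vertices $v_0,v_1,\dots,v_{n-1}$ cyclically, with $v_i v_{i+1}$ an edge for all $i$ (indices mod $n$), and fix $v=v_0$.
\begin{itemize}
\item Between $v_0$ and $v_k$ ($0\le k\le n-1$) there are exactly two internally disjoint paths along the cycle, of lengths $k$ and $n-k$.
\item Because the cycle is chordless, every $v_0$--$v_k$ path in $C_n$ is one of these two arcs. Hence $d_{C_n}(v_0,v_k)=\min\{k,n-k\}$.
\item Maximizing $\min\{k,n-k\}$ over $k\in\{0,\dots,n-1\}$ gives $\lfloor n/2\rfloor$, attained at $k=\lfloor n/2\rfloor$. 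Indeed, $\min\{k,n-k\}\le n/2$, and since it is an integer it is at most $\lfloor n/2\rfloor$. At $k=\lfloor n/2\rfloor$ we have $n-k=\lceil n/2\rceil\ge k$, so the minimum equals $\lfloor n/2\rfloor$.
\end{itemize}
Therefore $\varepsilon_{C_n}(v_0)=\lfloor n/2\rfloor$.

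No step is a real obstacle. The only point needing care is the claim that the two arcs are the only paths in $C_n$. This holds because every vertex of $C_n$ has degree two: a path leaving $v_0$ must continue along the cycle in one fixed direction until it reaches $v_k$. With vertex-transitivity, this gives the equality of radius and diameter stated in the observation.
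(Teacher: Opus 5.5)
Your argument is correct: the rotation $v_i\mapsto v_{i+1}$ makes all eccentricities equal, and $d_{C_n}(v_0,v_k)=\min\{k,n-k\}$ is maximized at $k=\lfloor n/2\rfloor$, which gives the common value. The $k=0$ case of your ``two arcs'' bullet is degenerate, but it trivially gives distance $0$.

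The paper states this as an observation with no proof, treating it as standard. Your computation is the routine verification it omits.
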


If $G$ is a cactus graph with $\mathrm{rad}(G) = 2$ and it has at least one cycle, then among all cycles \( C \) in \( G \) we select the one with largest cycle and call it as a \emph{main block}, denoted by $C^*$. 
Every other block of \( G \) that is connected to $C^*$ through a cut vertex is called an \emph{outer block}.  

\begin{observation}\label{lem:mainblock}
If $G$ is a cactus graph with $\operatorname{rad}(G)=2$, then every cycle in $G$ is either $C_3$, $C_4$, or $C_5$.
\end{observation}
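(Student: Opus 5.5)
The plan is to argue by contradiction, using two facts. First, in a cactus graph every cycle is a block, so it is chordless and isometric in $G$. Second, the radius of $G$ is at least the largest distance that a cycle forces from a central vertex. Suppose $G$ has a cycle $C=C_n$ with $n\ge 6$, and fix a center vertex $c$ with $\varepsilon_G(c)=2$. The goal is to produce a vertex of $C$ at distance at least $3$ from $c$, which contradicts $\operatorname{rad}(G)=2$. Note that $C_1,C_2$ do not occur since $G$ is simple, so the surviving cycles are exactly $C_3$, $C_4$, $C_5$.

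The first step is to show that distances between vertices of $C$ are the same in $G$ as in $C$. Take $a,b\in V(C)$ and any $a$--$b$ path $P$ in $G$. If $P$ leaves $C$ at some vertex $w\in V(C)$, it must re-enter $C$ at some vertex $w'$. The portion of $P$ between them lies outside $C$ except at its endpoints. If $w\neq w'$, this portion together with an arc of $C$ would form a cycle sharing an edge with $C$, or at least two vertices with $C$, inside a single block. That contradicts the fact that $C$ is a whole block of the cactus, since two cycles in a cactus share at most one vertex. Hence $w=w'$, such excursions can be shortcut, and $d_G(a,b)=d_C(a,b)$. By Observation~\ref{lemma1}, $C$ then contains two vertices at distance $\lfloor n/2\rfloor\ge 3$ in $G$.

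The second step locates $c$ relative to $C$. If $c\in V(C)$, then the vertex of $C$ antipodal to $c$ is at distance $\lfloor n/2\rfloor\ge3$, which is a contradiction. If $c\notin V(C)$, then all shortest paths from $c$ to $C$ enter $C$ through a single vertex $w$. Two different entry points would create a cycle through two vertices of $C$, contradicting the block structure, exactly as in the first step. Therefore $d_G(c,v)=d_G(c,w)+d_C(w,v)\ge 1+d_C(w,v)$ for every $v\in V(C)$. Choosing $v$ antipodal to $w$ gives $d_G(c,v)\ge 1+3=4$, again a contradiction. In both cases $n\le5$.

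The main obstacle is making the cactus argument about unique entry points and isometry rigorous; it is the only point where the cactus hypothesis is used. I would phrase it through the block-cutvertex tree: $C$ is a block, and any path between two vertices of a block that leaves the block must return through the same cut vertex. This gives both the isometry of $C$ and the single-gate property for an external vertex $c$ directly.
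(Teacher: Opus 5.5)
Your proof is correct. It rests on the same fact as the paper's: by Observation~\ref{lemma1}, a cycle $C_n$ with $n\ge 6$ has $\operatorname{rad}(C_n)=\lfloor n/2\rfloor\ge 3$, which is incompatible with $\operatorname{rad}(G)=2$.

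The paper stops at that point. It implicitly uses $\operatorname{rad}(G)\ge\operatorname{rad}(C_n)$, which fails for arbitrary subgraphs: a $C_6$ together with a vertex adjacent to all six of its vertices has radius $1$. It fails even for isometric subgraphs: adding to $C_6=(a_1,\dots,a_6)$ a vertex adjacent to $a_1,a_3,a_5$ keeps the $C_6$ isometric, yet that new vertex has eccentricity $2$.

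Your two steps are exactly what makes the inequality valid for cacti. First, a cycle of a cactus is isometric. Second, every vertex outside $C$ reaches $C$ through a single gate vertex $w$, and this second step is the one that genuinely needs the cactus hypothesis. So your argument is a more complete version of the same idea, not a different route.

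One wording point. For $d_G(c,v)=d_G(c,w)+d_C(w,v)$ you need that every $c$--$v$ path passes through $w$. It is not enough that shortest paths from $c$ to $C$ share an entry point. Your cycle argument does give the stronger statement, since the first vertex of $C$ on any $c$--$v$ path must be $w$, and so does the block--cutvertex tree formulation you mention. State it in that form.
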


\proof
By Observation~\ref{lemma1}, $\operatorname{rad}(C_n)=\left\lfloor \frac{n}{2} \right\rfloor$. Thus, if $n\ge 6$, then $\operatorname{rad}(C_n)\ge 3$, contradiction.
\qed

%%%%%%%%%%%%%%%%%%%%%%%%%%%%%%%%%%%%%%%%%%%%%%%%%
\subsection{$\chi_{\rho}$-critical cactus graphs with radius 2 and diameter $2$} \label{sec:rad22}
%%%%%%%%%%%%%%%%%%%%%%%%%%%%%%%%%%%%%%%%%%%%%%%%%

We begin by considering the case where the radius of $G$ and the diameter of $G$ are $2$. We first consider trees, which are acyclic cactus graphs. 

\begin{observation}\label{pro1}
There exists no tree with radius $2$ and diameter $2$. 
\end{observation}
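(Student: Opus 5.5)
The plan is to use the standard fact that the center of a tree consists of a single vertex or of two adjacent vertices, and that for a tree $T$ one has $\operatorname{diam}(T) = 2\operatorname{rad}(T)$ or $\operatorname{diam}(T) = 2\operatorname{rad}(T)-1$. Rather than quote this, I would give a short self-contained argument. Suppose $T$ is a tree with $\operatorname{rad}(T)=2$ and $\operatorname{diam}(T)=2$, and aim for a contradiction.

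First I will show that $T$ has a universal vertex, and for this I take a longest path in $T$. If $\operatorname{diam}(T)=2$, a longest path is $x-v-y$ of length $2$. Let $w$ be any vertex different from $v$, and consider the unique $w$--$v$ path in $T$.

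\begin{itemize}
\item If $d_T(w,v)\ge 2$, write the path as $w-\dots-w'-v$ with $w'\neq v$. Since $x$ and $y$ are distinct, at least one of them, say $x$, differs from $w'$.
\item Because $T$ is acyclic, the path from $w$ to $x$ passes through $v$. Hence $d_T(w,x)=d_T(w,v)+1\ge 3$, contradicting $\operatorname{diam}(T)=2$.
\end{itemize}

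So every vertex other than $v$ is adjacent to $v$. Then $\varepsilon_T(v)=1$ and $\operatorname{rad}(T)\le 1$, contradicting $\operatorname{rad}(T)=2$.

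An even quicker route avoids the longest-path argument. Take a central vertex $v$, so $\varepsilon_T(v)=2$, and choose $w$ with $d_T(w,v)=2$, via the path $w-a-v$. Since $T$ is not a star (a star has radius $1$), some further vertex must make distances large. The cleanest formulation, though, is the one above: in a tree, diameter $2$ forces $T\cong K_{1,n}$, which has radius $1$. I will present it that way.

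There is no real obstacle here. The only care needed is the case distinction guaranteeing that one endpoint of the diametral path avoids the neighbor $w'$ of $v$, and the use of acyclicity to conclude that the $w$--$x$ path goes through $v$.
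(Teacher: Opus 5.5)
Your proof is correct and takes the same route as the paper: a tree of diameter $2$ must be a star $K_{1,n}$, which has radius $1$. The paper cites the star fact as well known, whereas your longest-path argument proves it. Using acyclicity, any $w$ with $d_T(w,v)\ge 2$ would give $d_T(w,x)=d_T(w,v)+1\ge 3$, so $v$ is universal. The half-finished ``quicker route'' paragraph adds nothing and should be dropped.
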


\proof
It is well known that the unique tree with diameter $2$ is a star $K_{1,n}$ for some $n\ge 2$, which has radius $1$. Therefore, no tree has radius $2$ and diameter $2$.
\qed

Next, we consider cactus graphs with radius $2$ and diameter $2$ that contain at least one cycle. Our following result gives a characterization of such graphs.

\begin{lemma}\label{pro2}
Let \( G \) be a cactus graph with $\mathrm{rad}(G) = 2$  and  $\mathrm{diam}(G) = 2$. Then \( G \in \{ C_4, C_5 \} \).
\end{lemma}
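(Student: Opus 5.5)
We want to show: a cactus graph $G$ with $\operatorname{rad}(G)=\operatorname{diam}(G)=2$ is $C_4$ or $C_5$. By Observation~\ref{pro1}, $G$ is not a tree, so it has a cycle. By Observation~\ref{lem:mainblock}, every cycle has length $3$, $4$ or $5$. Fix a main block $C^*$ of maximum length.

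\emph{Step 1: $G$ has no cut vertex.} Suppose $v$ is a cut vertex. Since $\operatorname{rad}(G)=2$, $v$ is not universal, so some $w$ satisfies $d(v,w)=2$; say $w$ lies in a component $A$ of $G-v$. Pick a neighbor $z$ of $v$ in a different component $B$ of $G-v$. Every $z$--$w$ path passes through $v$, so
$$d(z,w)=d(z,v)+d(v,w)=1+2=3,$$
contradicting $\operatorname{diam}(G)=2$. Hence $G$ is $2$-connected.

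\emph{Step 2: $G$ is a cycle of length $4$ or $5$.} A $2$-connected cactus graph consists of a single block, which is $K_2$ or a cycle. The graph $K_2$ has radius $1$, so $G=C_n$ with $n\in\{3,4,5\}$. The cycle $C_3=K_3$ has radius $1$, which excludes it. By Observation~\ref{lemma1}, $\operatorname{rad}(C_4)=\operatorname{diam}(C_4)=2$ and $\operatorname{rad}(C_5)=\operatorname{diam}(C_5)=2$. Hence $G\in\{C_4,C_5\}$.

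The only substantive step is Step 1, the cut-vertex argument. Its key point is that a non-universal cut vertex always yields a pair of vertices at distance $3$. Once $2$-connectivity is established, the remaining case check on the cycle lengths is routine.
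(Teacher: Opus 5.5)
Your proof is correct, but it takes a different route from the paper.

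The paper fixes the main block $C^*$ and splits into cases:
\begin{itemize}
\item for $C^*\cong C_3$, it shows that every vertex outside $C^*$ is adjacent to one and the same vertex of $C^*$, which is then universal, contradicting $\operatorname{rad}(G)=2$;
\item for $C^*\cong C_4$ or $C_5$, it shows that any vertex outside $C^*$ would be at distance at least $3$ from a suitable vertex of $C^*$.
\end{itemize}

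You instead prove a general fact in Step 1: in any graph of diameter at most $2$, a cut vertex must be universal. So $\operatorname{rad}(G)=2$ rules out cut vertices. The cactus hypothesis is then used only once, to conclude that $G$ is a single block and hence $K_2$ or a cycle. After that, $\operatorname{rad}(C_n)=\lfloor n/2\rfloor$ finishes the proof.

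What your approach buys:
\begin{itemize}
\item It is shorter and avoids the three-way case split.
\item It does not need the cactus-specific fact, used implicitly in the paper's Cases 2 and 3, that a vertex outside $C^*$ reaches $C^*$ through a unique attachment vertex.
\item It makes clear that Step 1 holds for arbitrary graphs, not just cacti.
\end{itemize}

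What the paper's approach buys: its main-block case analysis sets up the framework reused in the diameter-$3$ section, where cut vertices genuinely occur and a uniform argument like yours no longer applies.

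A small remark: your opening sentences (excluding trees and fixing a main block $C^*$) are never used. Step 2 already handles the single-block case directly.
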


\proof Let \( G \) be a cactus graph with $\mathrm{rad}(G) = 2$  and  $\mathrm{diam}(G) = 2$. If $G$ is acyclic, then by Observation \ref{pro1} there exists no such a graph. Next, we suppose that \( G \) contains at least one cycle. Let $C^*$ be the main block. Recall that $C^*$ is either $C_3$, $C_4$, or $C_5$ by Observation \ref{lem:mainblock}.

\textbf{Case 1:} Let $C^* \cong C_3=(x_1,x_2,x_3)$. 

Since $C^*$ has diameter $1$, $G$ is not isomorphic to $C^*$. Therefore, there exists a vertex $x \in V(G)\setminus V(C^*)$.
We first show that $x$ has a neighbor in $C^*$. If not, since $G$ is connected, there is a shortest path from $x$ to some $x_i \in V(C^*)$ of length at least $2$, say $x_1$.
Then $d_G(x,x_j)\ge 3$ for any $j \in \{2,3\}$, contradiction.
Thus, every vertex in $V(G)\setminus V(C^*)$ is adjacent to a vertex of $C^*$. 
If two such vertices are adjacent to different vertices of $C^*$, say $u$ to $x_1$ and $v$ to $x_2$, then $u-x_1-x_2-v$ is a path of length $3$, contradicting with $\operatorname{diam}(G)=2$. 
Therefore, all vertices of $V(G)\setminus V(C^*)$ are adjacent to a single vertex of $C^*$, say $x_1$.
It follows that $x_1$ is a universal vertex of $G$, \ie, $\varepsilon_G(x_1)=1$, which implies $\operatorname{rad}(G)=1$, contradiction. Therefore, no cactus graph with radius $2$ and diameter $2$ contains a cycle isomorphic to $C_3$.

\textbf{Case 2:} Let \( C^* \cong C_4=(x_1, x_2, x_3, x_4) \).

By Observation~\ref{lemma1}, $\operatorname{diam}(C^*)=\operatorname{rad}(C^*)=2$. 
We claim that $V(G)=V(C^*)$. Suppose to the contrary that there exists a vertex $x \in V(G)\setminus V(C^*)$. Since $G$ is connected,  there is a shortest path from $x$ to some $x_i \in V(C^*)$ of length at least $1$, say $x_1$.  Then $d_G(x,x_3)\ge 3$, contradiction.  Therefore, $G \cong C_4$.
	
\textbf{Case 3:} Let \( C^* \cong C_5=(x_1, x_2, x_3, x_4, x_5) \).
	
By Observation~\ref{lemma1}, $\operatorname{diam}(C^*)=\operatorname{rad}(C^*)=2$. 
We claim that $V(G)=V(C^*)$. Suppose to the contrary that there exists a vertex $x \in V(G)\setminus V(C^*)$. Since $G$ is connected,  there is a shortest path from $x$ to some $x_i \in V(C^*)$ of length at least $1$, say $x_1$.  Then $d_G(x,x_j)\ge 3$ for any $j \in \{3,4\}$, contradiction.  Therefore, $G \cong C_4$.
	
As a result, \( G \) is either \( C_4 \) or \( C_5 \).
\qed

Next, we express our main result about $\chi_{\rho}$-critical cactus graphs with radius $2$ and diameter $2$.

\begin{theorem}\label{teo3}
If $G$ is a cactus graph with $\mathrm{rad}(G) = 2$  and  $\mathrm{diam}(G) = 2$, then $G$ is $\chi_{\rho}$-critical if and only if $G \cong C_5$.
\end{theorem}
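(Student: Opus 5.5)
By Lemma~\ref{pro2}, any cactus graph $G$ with $\operatorname{rad}(G)=\operatorname{diam}(G)=2$ is $C_4$ or $C_5$. The proof therefore reduces to two checks: $C_5$ is $\chi_\rho$-critical, and $C_4$ is not. In both graphs the diameter is $2$, so Lemma~\ref{lemma4} gives $\chi_\rho(G)=|V(G)|-\alpha(G)+1$. Hence $\chi_\rho(C_4)=4-2+1=3$ and $\chi_\rho(C_5)=5-2+1=4$.

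For $C_4$, remove any edge $e$. Then $C_4-e\cong P_4$ has diameter $3$. A $3$-packing coloring of $P_4$ is $1,2,1,3$, and no $2$-packing coloring exists, because a path on four vertices cannot have all non-$1$ vertices pairwise at distance at least $3$ using a single color $2$. Alternatively, one can invoke the known value $\chi_\rho(P_n)=3$ for $n\ge 4$ from~\cite{goddard-2008}. Either way $\chi_\rho(C_4-e)=3=\chi_\rho(C_4)$, so $C_4$ is not $\chi_\rho$-critical.

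For $C_5$, it suffices to check edge deletions, since $C_5$ has no isolated vertices and edge-criticality is then equivalent to criticality~\cite{bresar-2022}. By symmetry every edge deletion gives $C_5-e\cong P_5$, with vertices $v_1,\dots,v_5$ in order. The coloring $1,2,1,3,1$ is a packing coloring of $P_5$: the $1$'s are pairwise nonadjacent, and colors $2$ and $3$ are used once each. Thus $\chi_\rho(C_5-e)\le 3<4=\chi_\rho(C_5)$ for every edge $e$, and $C_5$ is $\chi_\rho$-critical.

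There is no real obstacle; the only point needing care is the exact value $\chi_\rho(P_4)=3$, which must not drop below $\chi_\rho(C_4)$. The lower bound follows because a $2$-packing coloring of $P_4$ would need $\{v_1,v_3\}$ or $\{v_2,v_4\}$ as the color-$1$ class, leaving two vertices at distance $2$ that would both need color $2$, which is impossible.
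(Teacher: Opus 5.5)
Your proof is correct and follows the same route as the paper: you reduce to $G\in\{C_4,C_5\}$ via Lemma~\ref{pro2}, then use $\chi_\rho(C_4-e)=\chi_\rho(P_4)=3=\chi_\rho(C_4)$ and $\chi_\rho(C_5-e)=\chi_\rho(P_5)=3<4=\chi_\rho(C_5)$, and you also spell out the values via Lemma~\ref{lemma4} and the edge-criticality reduction, which the paper leaves implicit. One small slip in your last paragraph: the color-$1$ class could also be $\{v_1,v_4\}$, but then the adjacent vertices $v_2,v_3$ would both need color $2$, so $\chi_\rho(P_4)\ge 3$ still holds (your earlier distance-$3$ argument already covers this case).
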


\proof
Let $G$ be a $\chi_\rho$-critical cactus graph with $\mathrm{rad}(G)=2$ and $\mathrm{diam}(G)=2$. By Lemma~\ref{pro2}, $G \in \{C_4,C_5\}$. Since $\chi_\rho(C_4)=\chi_\rho(P_4)=3$, it is clear that $C_4$ is not $\chi_\rho$-critical. However, $C_5$ is $\chi_\rho$-critical since $\chi_\rho(C_5)=4$ and $\chi_\rho(P_5)=3$.
Therefore, $G \cong C_5$.
\qed 

%%%%%%%%%%%%%%%%%%%%%%%%%%%%%%%%%%%%%%%%%%%%%%%%%
\subsection{$\chi_{\rho}$-critical cactus graphs with radius 2 and diameter $3$}
\label{sec:rad23}
%%%%%%%%%%%%%%%%%%%%%%%%%%%%%%%%%%%%%%%%%%%%%%%%%

Next, we consider \( \chi_{\rho} \)-critical cactus graphs with radius $2$ and diameter $3$.

We first consider trees, which are acyclic cactus graphs. Lemma \ref{pro3} provides a characterization of \( \chi_{\rho} \)-critical trees with radius \( 2 \) and diameter \( 3 \).

\begin{lemma}~{\rm \cite{bresar-2022}}\label{pro3} 
Let \( G \) be a tree with $\mathrm{rad}(G) = 2$  and  $\mathrm{diam}(G) = 3$. Then \( G \) is \( \chi_{\rho} \)-critical if and only if \( G \cong P_4 \).
\end{lemma}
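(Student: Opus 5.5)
The plan is to first pin down the shape of a tree $G$ with $\operatorname{rad}(G)=2$ and $\operatorname{diam}(G)=3$, and then test criticality by deleting suitable edges. A tree of diameter $3$ is a double star. There are two adjacent central vertices $a$ and $b$, every other vertex is a leaf attached to $a$ or to $b$, and each of $a$, $b$ has at least one leaf neighbour (otherwise the diameter would drop to $2$). So we may write $G=S(p,q)$, where $a$ carries $p\ge 1$ leaves and $b$ carries $q\ge 1$ leaves. Any such tree has radius $2$ automatically, because no vertex is universal.

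Next I compute $\chi_\rho(S(p,q))$.
\begin{itemize}
\item If $p=q=1$, then $G\cong P_4$ and $\chi_\rho(P_4)=3$. Deleting any edge of $P_4$ leaves $P_3\cup K_1$ or $2K_2$, both with packing chromatic number at most $2$. Hence $P_4$ is $\chi_\rho$-critical.
\item In general, colour every leaf with $1$, colour $a$ with $2$ and $b$ with $3$; this is a packing colouring, so $\chi_\rho(G)\le 3$.
\item Since $G$ contains $P_4$ and $\chi_\rho$ is monotone under taking subgraphs, $\chi_\rho(G)\ge 3$. Thus $\chi_\rho(S(p,q))=3$ for all $p,q\ge 1$.
\end{itemize}

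For the converse, suppose $G$ is critical and, without loss of generality, $p\ge 2$. Delete the edge $e=ax$ for some leaf $x$ of $a$. Then $G-e$ is $S(p-1,q)\cup K_1$. The first component still has diameter $3$, so it contains $P_4$, and therefore $\chi_\rho(G-e)\ge 3=\chi_\rho(G)$. This contradicts criticality, so $p=q=1$ and $G\cong P_4$.

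Alternatively, one could simply cite the characterization of Bre\v{s}ar and Ferme~\cite{bresar-2022} that a tree is $\chi_\rho$-critical exactly when it is $\chi_\rho$-vertex-critical, but the direct argument above is self-contained. The only point needing care is the structural claim that diameter $3$ forces a double star. This follows by taking a diametral path $u\!-\!a\!-\!b\!-\!v$ and observing that any vertex at distance at least $2$ from $\{a,b\}$ would create a path of length at least $4$. Once the structure is settled, the rest is routine.
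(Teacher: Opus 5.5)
Your proof is correct. It takes a different route from the paper, which gives no proof of this lemma and simply imports it from Bre\v{s}ar and Ferme~\cite{bresar-2022}. Within the paper it can also be read off from Lemma~\ref{lemma8}, since a tree is a block graph whose central block is $B\cong K_2$:
\begin{itemize}
\item Conditions ($c_1$)--($c_3$) cannot hold, because a tree has no side blocks of order at least $3$.
\item Condition ($b$) fails, because in a tree both central vertices of degree $3$ would each have two leaf neighbours, not exactly $|V(B)|-1=1$ of them.
\item Only ($a$) remains, and it gives $P_4$.
\end{itemize}

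You instead argue from first principles. You use the double-star structure $S(p,q)$ and the uniform value $\chi_\rho(S(p,q))=3$, with the upper bound from an explicit colouring and the lower bound from the $P_4$ subgraph. You then delete a pendant edge at a centre carrying at least two leaves. This is elementary and self-contained, and it shows in passing that every tree of radius $2$ and diameter $3$ has packing chromatic number $3$. The citation route is shorter but depends on the full block-graph characterization.

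One point to make explicit: when you conclude that $P_4$ is critical after checking only edge deletions, you are using that every proper subgraph of a graph without isolated vertices lies in some $G-e$. Monotonicity of $\chi_\rho$ then finishes the argument; this is the equivalence quoted in the introduction.

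Your suggested alternative via ``criticality equals vertex-criticality for trees'' is not a proof by itself, since you would still need to know which trees of diameter $3$ are vertex-critical. Your direct argument does not depend on it.
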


Next, we consider cactus graphs with at least one cycle. Let $C^*$ be the main block. Recall that $C^*$ is either $C_3$, $C_4$, or $C_5$ by Observation \ref{lem:mainblock}. We now introduce a family of cactus graphs that will be used in the subsequent arguments. Two example graphs from this family are shown in Figures \ref{fig4} and \ref{fig5}.

Let ${\mathcal{G}_{q}^{(r)}(k_{1},m_{1};k_{2},m_{2};\dots;k_{q},m_{q})}$ be a family of cactus graphs \( G \) with main block isomorphic to $C_r= (x_1, x_2, \dots, x_r)$ such that \( G \) has exactly $q$ consecutive cut vertices $ x_1, x_2, \dots, x_q \in V(C_r)$, where $1 \leq q \leq r$. Each cut vertex $x_i$ is incident to outer blocks that are either $K_2$ or $C_3 = (x_i, u_{ij}, v_{ij})$, where $1 \leq i \leq q$ and $1 \leq j \leq m_i$. In addition, $k_i$ and $m_i$ are the number of $K_2$ and $C_3$ blocks incident to $x_i$, respectively.
Here \( k_i, m_i \in \mathbb{N}_0 \) and \( k_i + m_i \ge 1 \) for every $1 \leq i \leq q$.

\begin{figure}[H]
\centering
\begin{minipage}{0.47\textwidth}
\centering
\begin{tikzpicture}
	\tikzset{unode/.style = {circle, draw=cyan!30!black, thick, fill=white, inner sep=2.3pt, minimum size=2.3pt}}
	\tikzset{uedge/.style = {draw=cyan!20!black, thick}}
	
	\begin{scope}
		
		% ---- center block C5 ----
		\foreach \x [count=\i] in {18,90,162,234,306}{
			\coordinate (o\i) at (\x:1.8cm) {};
		}
		
		% ---- outer triangle vertices (i1 removed) ----
		\foreach \x [count=\i] in {145,160,175,195,210}{
			\coordinate (i\i) at (\x:2.6cm) {};
		}
		
		% ---- edges ----
		\path[uedge] (o1)--(o2)--(o3)--(o4)--(o5)--(o1);
		\path[uedge] (o3)--(i1);
		\path[uedge] (o3)--(i2)--(i3)--(o3);
		\path[uedge] (o3)--(i4)--(i5)--(o3);
		
		% ---- draw nodes ----
		\foreach \x [count=\i] in {18,90,162,234,306}{
			\node[unode] (vx\i) at (\x:1.8cm) {};
		}
		
		\foreach \x [count=\i] in {145,160,175,195,210}{
			\node[unode] (vi\i) at (\x:2.6cm) {};
		}
		
		% ---- labels for C5 ----
		\node[below right] at (o3.west) {\scriptsize $x_1$};
		\node[above right] at (o2.north east) {\scriptsize $x_5$};
		\node[below right]  at (o1.north west) {\scriptsize $x_4$};
		\node[below right]  at (o4.south west) {\scriptsize $x_2$};
		\node[below right] at (o5.south east) {\scriptsize $x_3$};
		
		% ---- labels for outer triangles ----
		\node[above left]  at (i2.north west) {\scriptsize $u_{11}$};
		\node[below left]  at (i3.south west) {\scriptsize $v_{11}$};
		\node[above left]  at (i4.south west) {\scriptsize $u_{12}$};
		\node[below left] at (i5.south east) {\scriptsize $v_{12}$};
		
	\end{scope}
\end{tikzpicture}
\caption{$\mathcal{G}_{1}^{(5)}(1,2).$}
\label{fig4}
\end{minipage}
\hfill
\begin{minipage}{0.47\textwidth}
\centering
\begin{tikzpicture}[scale=0.9]
	\tikzset{unode/.style = {circle, draw=cyan!30!black, thick, fill=white, inner sep=2pt, minimum size=2pt}}
	\tikzset{uedge/.style = {draw=cyan!20!black, thick}}
	
	\begin{scope}
		
		% ---- center block C4 vertices ----
		\foreach \x [count=\i] in {45,135,225,315}{
			\coordinate (o\i) at (\x:1.4cm) {};
		}
		
		% ---- outer vertices (reduced) ----
		\foreach \x [count=\i] in {135,150,165,185,200,215,230,245,260}{
			\coordinate (i\i) at (\x:2.3cm) {};
		}
		
		% ---- edges of C4 ----
		\path[uedge] (o1)--(o2)--(o3)--(o4)--(o1);
		
		% attached at x1 = o2
		\path[uedge] (o2)--(i1);
		\path[uedge] (o2)--(i2)--(i3)--(o2);
		\path[uedge] (o2)--(i4)--(i5)--(o2);
		
		% attached at x2 = o3
		\path[uedge] (o3)--(i6);
		\path[uedge] (o3)--(i7);
		\path[uedge] (o3)--(i8)--(i9)--(o3);
		
		% ---- draw nodes ----
		\foreach \x in {45,135,225,315}{
			\node[unode] at (\x:1.4cm){};
		}
		\foreach \x in {135,150,165,185,200,215,230,245,260}{
			\node[unode] at (\x:2.3cm){};
		}
		
		% ---- labels for C4 ----
		\node at ($(o2)+(-0.006,0.3)$) {\scriptsize $x_1$};
		\node at ($(o3)+(-0.3,0.1)$) {\scriptsize $x_2$};
		\node[below right] at (o4.south east) {\scriptsize $x_3$};
		\node[above right] at (o1.north east) {\scriptsize $x_4$};
		
		% triangles at x1
		\node[above left]  at (i2.north west) {\scriptsize $u_{11}$};
		\node[below left]  at (i3.south west) {\scriptsize $v_{11}$};
		\node[above left]  at (i4.south west) {\scriptsize $u_{12}$};
		\node[below left]  at (i5.south east) {\scriptsize $v_{12}$};
		
		% triangle at x2 (only one left)
		\node[below] at (i8.north east) {\scriptsize $u_{21}$};
		\node[below] at (i9.east)      {\scriptsize $v_{21}$};
		
	\end{scope}
\end{tikzpicture}
\caption{$\mathcal{G}_{2}^{(4)}(1,2;2,1).$}
\label{fig5}
\end{minipage}
\end{figure}

%%%%%%%%%%%%%%%%%%%%%%%%%%%%%%%%%%%%%%%%%%%%%%%%%
\subsubsection{$\chi_{\rho}$-critical cactus graphs with radius 2 and diameter $3$, where $C_5$ is the main block}
%%%%%%%%%%%%%%%%%%%%%%%%%%%%%%%%%%%%%%%%%%%%%%%%%

We now proceed with cactus graphs with radius 2 and diameter $3$ with $C_5$ as their main block. The next result provides the packing chromatic number of a graph $G \in \mathcal{G}_{1}^{(5)}(k_1,m_1)$.

\begin{proposition}\label{pro4} 
Let $G \in \mathcal{G}_{1}^{(5)}(k_1,m_1)$ and $k_1, m_1 \geq 0$. Then
\[
\chi_\rho(G) =
\begin{cases}
4, & \text{if } m_1 = 0, \\[6pt]
m_1+3, & \text{if } m_1 \geq 1.
\end{cases}
\]
\end{proposition}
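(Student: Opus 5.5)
The plan is to match a lower bound with an explicit packing coloring, treating $m_1=0$ and $m_1\ge 1$ separately. Throughout, $C_5=(x_1,\dots,x_5)$ is the main block, the leaves are attached at $x_1$, and the triangles are $(x_1,u_{1j},v_{1j})$. The one fact used repeatedly is monotonicity: if $H$ is a subgraph of $G$, then $d_H\ge d_G$, so any packing coloring of $G$ restricts to one of $H$. Hence $\chi_\rho(H)\le\chi_\rho(G)$. Since $C_5\subseteq G$ and $\chi_\rho(C_5)=4$ (as in the proof of Theorem~\ref{teo3}), we get $\chi_\rho(G)\ge 4$ in all cases.

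\textbf{Upper bounds.} For $m_1=0$, I would use the coloring $x_1\mapsto 2$, $x_2\mapsto 1$, $x_3\mapsto 3$, $x_4\mapsto 1$, $x_5\mapsto 4$, with every leaf colored $1$. Color $2$ is used only once, color $1$ is independent, and colors $3$ and $4$ occur once each, so this is a $4$-packing coloring.

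For $m_1\ge1$, I would set $x_1\mapsto 3$, $x_2\mapsto x_5\mapsto 1$, $x_3\mapsto 2$, $x_4\mapsto 4$, with all leaves and all $u_{1j}$ colored $1$. Then $v_{11}\mapsto 2$, which is valid because $d(v_{11},x_3)=3$. The remaining $v_{12},\dots,v_{1m_1}$ receive the distinct colors $5,\dots,m_1+3$. The checks are short: color $1$ is independent; the two $2$'s are at distance $3$; colors $3$ and $4$ are each used once. This gives $\chi_\rho(G)\le \max(4,m_1+3)=m_1+3$.

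\textbf{Lower bound for $m_1\ge1$ (main obstacle).} Let $T=\{x_1,\dots,x_5\}\cup\{u_{1j},v_{1j}:1\le j\le m_1\}$, so $|T|=2m_1+5$. Fix any packing coloring with $k$ colors and bound how often each color can appear on $T$.
\begin{itemize}
\item An independent subset of $T$ contains at most one vertex from each triangle and at most two vertices of the $C_5$. So color $1$ appears at most $m_1+2$ times on $T$.
\item Two vertices of $T$ at distance at least $3$ must consist of one triangle vertex and one of $x_3,x_4$. Since $x_3x_4$ is an edge, at most one of $x_3,x_4$ can get color $2$. Also, two distinct triangle vertices are at distance at most $2$. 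Hence color $2$ appears at most twice on $T$.
\item The diameter is $3$, so each color $i\ge3$ appears at most once on $T$.
\end{itemize}
Counting gives
\[
2m_1+5\le (m_1+2)+2+(k-2),
\]
so $k\ge m_1+3$. The delicate point is the second bullet, which relies on the distance structure of the cactus. I would verify it explicitly: triangle vertices are at distance $\le 2$ from $x_1,x_2,x_5$ and from each other. Combined with the upper bounds, this yields the stated formula in both cases.
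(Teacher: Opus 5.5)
Your proof is correct and follows essentially the paper's approach. The lower bound of $4$ comes from the $C_5$ subgraph, the upper bounds come from explicit colorings, and for $m_1\ge 1$ you use the same color-class counting: color $1$ bounded by an independence number, color $2$ by $2$, and colors $\ge 3$ by $1$ via diameter $3$. The differences are cosmetic: you count on the leaf-free set $T$ instead of using $\alpha(G)=m_1+k_1+2$ on all of $V(G)$, and your colorings permute the roles of some cycle vertices, which remain valid.
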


\proof Let $G \in \mathcal{G}_{1}^{(5)}(k_1,m_1)$. First let $m_1 = 0$. Then $k_1 \geq 1$ by the definition of $\mathcal{G}_{1}^{(5)}(k_1,m_1)$.  It is well known that for every subgraph \( H \subseteq G \), we have \( \chi_\rho(H) \leq \chi_\rho(G) \). Therefore, since \( \chi_\rho(C_5) = 4 \), it follows that \( \chi_\rho(G) \geq 4 \). To show that \( \chi_\rho(G) = 4 \), it suffices to construct a feasible $4$-packing coloring of \( G \). Let $x_1$ be the cut vertex of $G$. Define a vertex coloring $c:V(G)\to[4]$ as follows: $c(x_2) = c(x_4)=1$, $c(x_3)=2$, $c(x_1) = 3$, $c(x_5) = 4$, and assign color $1$ to each leaf attached to $x_1$. Therefore,  $\chi_\rho(G) =  4$. 
	
Now, let \( m_1 \geq 1 \). Obviously $|V(G)|=2m_1+k_1+5$. Note that any packing coloring of $G$ of size $\ell$ partitions $V(G)$ into $\ell$ color classes $X_1, X_2, \dots,X_{\ell}$ such that each $X_i$ is an $i$-packing for each $i \in \{1, 2, \dots, \ell\}$. So, $X_1$ contains at most $\alpha(G)=m_1+k_1+2$ vertices, $X_2$ contains at most $2$ vertices since there are no $3$ vertices whose pairwise distances are equal to $3$, and each $X_j$ contains exactly one vertex for each $j \in \{3, 4, \dots, \ell\}$ since $G$ has diameter $3$. Thus, we obtain the following inequality:
$$
|V(G)|=|X_1| + \dots + |X_{\ell}| \leq \alpha(G) + 2 + (\ell - 2) = \alpha(G) + \ell.
$$
By rearranging the terms, it follows that
\[
\ell \geq |V(G)| - \alpha(G)
= (2m_1+k_1+5) - (m_1+k_1+2)
= m_1+3.
\]
So every packing coloring of $G$ has at least $m_1 + 3$ colors,
hence, $\chi_{\rho}(G) \geq m_1 + 3.$
For the upper bound, we construct a packing coloring of $G$ with $m_1+3$ colors.  
Let $x_1$ be the cut vertex of $G$. Since $m_1 \geq 1$, there exist $m_1$ number of $C_3$ blocks $(x_1,u_{1i},v_{1i})$, where $1 \leq i \leq m_1$. Define a packing coloring $c:V(G)\to[m_1+3]$ as follows: $c(x_2) = c(x_4)=1$, $c(x_3)=c(v_{11})=2$, $c(x_1) = 3$, $c(x_5) = 4$.
Moreover, assign color $1$ to all leaves attached to $x_1$, and all vertices $u_{1i}$.
Lastly, assign distinct colors $5,6,\dots,m_1+3$ to the remaining $m_1-1$ vertices $v_{1i}$ for $i\geq 2$ (see Figure \ref{fig6}). Thus, we conclude that  $\chi_\rho(G) =  m_1+3$.
\qed

\begin{figure}[H]
\centering		
\begin{tikzpicture}
\tikzset{unode/.style = {
circle, 
draw=cyan!30!black, 
thick,
fill=white,
inner sep=3.2pt,
minimum size=7pt } }
\tikzset{uedge/.style = {
draw=cyan!20!black, 
thick} }
\tikzset{knode/.style = {
circle, 
draw=cyan!30!black, 
thin,
fill=myblack,
inner sep=0.5pt,
minimum size=0.5pt } }

\begin{scope}[xshift=5cm]
\foreach \x [count=\i] in {18,90,162,234,306}{
\coordinate (o\i) at (\x:1.8cm) {};
}
\foreach \x [count=\i] in {125, 140, 155, 170, 185, 200}{
\coordinate (i\i) at (\x:3cm) {};
}
			
\path[uedge] (o1) -- (o2) -- (o3) -- (o4)-- (o5)--(o1);
\path[uedge] (o3) -- (i1);
\path[uedge] (o3) -- (i2);
\path[uedge] (o3) -- (i3) -- (i4) -- (o3);
\path[uedge] (o3) -- (i5) -- (i6) -- (o3);
			
\foreach \x [count=\i] in {18,90,162,234,306}{
\node[unode] at (\x:1.8cm){};
}
\foreach \x [count=\i] in {125, 140, 155, 170, 185, 200}{
\node[unode] at (\x:3cm){};
}

\node at (o1) {\scriptsize $1$};
\node at (o2) {\scriptsize $4$};
\node at (o3) {\scriptsize $3$};
\node at (o4) {\scriptsize $1$};
\node at (o5) {\scriptsize $2$};
\node at (i1) {\scriptsize $1$};
\node at (i2) {\scriptsize $1$};
\node at (i3) {\scriptsize $1$};
\node at (i4) {\scriptsize $2$};
\node at (i5) {\scriptsize $1$};
\node at (i6) {\scriptsize $5$};

\node[right] at ($(o3.west)+(0.15,0)$) {\scriptsize $x_{1}$};
\end{scope}
		
\end{tikzpicture}
\caption{$\mathcal{G}_{1}^{(5)}(2,2)$-An example of a $5$-packing coloring.}
\label{fig6}
\end{figure}

\begin{proposition}\label{pro5} 
Let $G \in \mathcal{G}_{1}^{(5)}(k_1,m_1)$. If $k_1 > 0$, then $G$ is not $\chi_{\rho}$-critical. 
\end{proposition}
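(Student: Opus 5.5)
To show that $G$ is not $\chi_\rho$-critical, it suffices to exhibit one edge $e$ with $\chi_\rho(G-e)=\chi_\rho(G)$. The natural choice is a pendant edge $e=x_1w$, where $w$ is one of the $k_1\ge 1$ leaves attached to $x_1$. Then $G-e$ consists of the isolated vertex $w$ and the graph $H=G-w$. Hence $\chi_\rho(G-e)=\max\{1,\chi_\rho(H)\}=\chi_\rho(H)$, and we must show $\chi_\rho(H)=\chi_\rho(G)$. Since $H$ is a subgraph of $G$, we already have $\chi_\rho(H)\le\chi_\rho(G)$, so only the lower bound $\chi_\rho(H)\ge\chi_\rho(G)$ needs proof. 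We take $\chi_\rho(G)$ from Proposition~\ref{pro4} and split into two cases according to $m_1$.

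\textbf{Case $m_1=0$.} Here $\chi_\rho(G)=4$. The graph $H$ still contains $C_5$ as a subgraph, and $\chi_\rho(C_5)=4$, so $\chi_\rho(H)\ge 4$, which gives equality. This includes the situation $k_1=1$, where $H\cong C_5$.

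\textbf{Case $m_1\ge 1$.} Here $\chi_\rho(G)=m_1+3$. Either $H$ is again a member of $\mathcal{G}_{1}^{(5)}(k_1-1,m_1)$, which is allowed even when $k_1-1=0$ because $m_1\ge1$ keeps $x_1$ a cut vertex, or we can argue directly. In the first reading, Proposition~\ref{pro4} applied to $H$ gives $\chi_\rho(H)=m_1+3$, since the formula does not depend on $k_1$. To be safe, I would also reproduce the counting argument for $H$ directly:
\begin{itemize}
\item $|V(H)|=2m_1+(k_1-1)+5$;
\item $\alpha(H)=m_1+(k_1-1)+2$, because a maximum independent set takes one vertex from each triangle, every remaining leaf, and two vertices of $C_5$;
\item $\operatorname{diam}(H)\le 3$, so every color class $X_j$ with $j\ge3$ has at most one vertex;
\item $X_2$ has at most two vertices.
\end{itemize}
These give $|V(H)|\le \alpha(H)+\ell$, and hence $\ell\ge |V(H)|-\alpha(H)=m_1+3$.

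The main point to check is that the two structural facts survive deleting $w$. First, $\operatorname{diam}(H)$ must still be at most $3$; this holds because $H$ is an isometric subgraph of $G$ (removing a leaf changes no distances), so $\operatorname{diam}(H)\le\operatorname{diam}(G)=3$. Second, the bound $|X_2|\le 2$ must still hold, i.e.\ $H$ must contain no three vertices pairwise at distance $3$. Since distances in $H$ coincide with those in $G$, this follows exactly as in the proof of Proposition~\ref{pro4}. With both facts in place, $\chi_\rho(G-e)=\chi_\rho(G)$ in every case, so $G$ is not $\chi_\rho$-critical.
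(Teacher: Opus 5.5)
Your proof is correct and follows essentially the same route as the paper. Both delete a pendant edge $x_1w$ and note that $\chi_\rho(G-e)=\chi_\rho(G-w)$. Then $G-w$ either contains $C_5$ (when $m_1=0$) or lies in $\mathcal{G}_{1}^{(5)}(k_1-1,m_1)$, so Proposition~\ref{pro4} gives the same value. Your $C_5$-subgraph argument handles the whole case $m_1=0$ at once, including the paper's separate subcase $k_1=1$, and your direct recount for $m_1\ge 1$ is a valid but redundant safeguard.
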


\proof Suppose that $G \in \mathcal{G}_{1}^{(5)}(k_1,m_1)$  with $k_1 > 0$ is a \( \chi_\rho \)-critical graph, \ie, $G$ contains at least one leaf \( v \). Let \( e = x_1v \) be the edge incident to \( v \), where \( x_1 \) is the cut vertex of $G$.  Since \( v \) is a leaf, the removal of \( e \) splits the graph $G-e$ into two components \( H_1=G[V(G) \setminus \{v\}] \) and $H_2$, which is an isolated vertex $v$. Therefore, we have $\chi_\rho(G-e)=\max \left(\chi_\rho\left(H_1\right), \chi_\rho\left(H_2\right)\right)=\chi_\rho(H_1)$ since $\chi_\rho(H_2)=1$. For $k_1 = 1$ and $m_1=0$, we have $H_1 \cong C_5$ and $\chi_\rho(G-e)=\chi_\rho(C_5)=4$. In addition, by Proposition~\ref{pro4}, $\chi_\rho(G)=4$, which contradicts the assumption that \( G \) is \( \chi_\rho \)-critical. In the other cases, $H_1 \in \mathcal{G}_{1}^{(5)}(k_1-1,m_1)$ and its number of $C_3$ blocks ($m_1$) is equal to that of $G$. Then $\chi_\rho(G - e) = \chi_\rho(G) =m_1+3$ by Proposition~\ref{pro4}. Therefore, $G$ is not $\chi_{\rho}$-critical.
\qed

\begin{proposition}\label{pro6}
Let $G \cong \mathcal{G}_{1}^{(5)}(0,1)$. Then $G$ is not $\chi_{\rho}$-critical. 
\end{proposition}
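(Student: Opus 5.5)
To show that $G\cong\mathcal{G}_{1}^{(5)}(0,1)$ is not $\chi_\rho$-critical, we exhibit one edge $e$ with $\chi_\rho(G-e)=\chi_\rho(G)$. Here $G$ is the $5$-cycle $(x_1,\dots,x_5)$ with one triangle $(x_1,u_{11},v_{11})$ attached at $x_1$. By Proposition~\ref{pro4} with $m_1=1$, $\chi_\rho(G)=4$.

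The natural choice is $e=u_{11}v_{11}$. In $G-e$ the vertices $u_{11}$ and $v_{11}$ become pendant vertices attached to $x_1$. Since $G-e$ still contains $C_5$ as a subgraph, monotonicity of $\chi_\rho$ under subgraphs gives
\[
\chi_\rho(G-e)\ge \chi_\rho(C_5)=4=\chi_\rho(G).
\]
Together with $\chi_\rho(G-e)\le\chi_\rho(G)$, this shows $\chi_\rho(G-e)=4$. Hence $G$ is not $\chi_\rho$-critical.

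There is essentially no obstacle. The only point to check is that the lower bound $4$ comes from the $C_5$ subgraph rather than from the triangle. This is why deleting an edge of the outer triangle, rather than an edge of the cycle, is the right choice.

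As a sanity check, $G-e$ is exactly $\mathcal{G}_{1}^{(5)}(2,0)$, so Proposition~\ref{pro4} also gives $\chi_\rho(G-e)=4$ directly. An explicit $4$-coloring is the one from that proposition: $c(x_2)=c(x_4)=1$, $c(x_3)=2$, $c(x_1)=3$, $c(x_5)=4$, and $c(u_{11})=c(v_{11})=1$. This is valid because $u_{11}$ and $v_{11}$ are nonadjacent in $G-e$, and each is adjacent only to $x_1$, which has color $3$.
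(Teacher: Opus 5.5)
Your proof is correct and follows the paper's argument: the paper also deletes $e=u_{11}v_{11}$, observes that $G-e\cong\mathcal{G}_{1}^{(5)}(2,0)$, and applies Proposition~\ref{pro4} to get $\chi_\rho(G-e)=\chi_\rho(G)=4$. Your main route instead takes the lower bound from the $C_5$ subgraph and combines it with monotonicity, which gives the same conclusion a bit more directly.
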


\proof Let $G \in \mathcal{G}_{1}^{(5)}(0,1)$,	and let the single $C_3$ block attached to $x_1$ be $(x_1,u_{11},v_{11})$. Consider the edge $e=u_{11}v_{11}$. Then we have $G-e \cong \mathcal{G}_{1}^{(5)}(2,0)$. By Proposition~\ref{pro4}, we have $\chi_{\rho}(G) = \chi_{\rho}(G - e) = 4$. Therefore, $G$ is not $\chi_{\rho}$-critical.
\qed

\begin{lemma}\label{pro7}
Let $G \in \mathcal{G}_{1}^{(5)}(k_1,m_1)$. Then $G$ is $\chi_{\rho}$-critical if and only if $k_1=0$ and $m_1\geq 2$.
\end{lemma}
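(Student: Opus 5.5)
The necessity direction follows from the propositions already proved. If $k_1>0$, Proposition~\ref{pro5} says $G$ is not $\chi_\rho$-critical. If $k_1=0$, then $m_1\ge 1$ by the definition of the family. The case $m_1=1$ is excluded by Proposition~\ref{pro6}. So criticality forces $k_1=0$ and $m_1\ge 2$.

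For sufficiency, let $k_1=0$ and $m_1\ge 2$, so $\chi_\rho(G)=m_1+3$ by Proposition~\ref{pro4}. We show $\chi_\rho(G-e)\le m_1+2$ for every edge $e$. Because $\chi_\rho$ is monotone under subgraphs and $\mathrm{Aut}(G)$ acts on the edges, it suffices to handle one representative from each edge orbit. The orbits are:
\begin{itemize}
\item[(a)] an edge $u_{1j}v_{1j}$;
\item[(b)] an edge $x_1u_{1j}$ (equivalently $x_1v_{1j}$);
\item[(c)] a cycle edge $x_1x_2$ (equivalently $x_1x_5$);
\item[(d)] $x_2x_3$ (equivalently $x_4x_5$);
\item[(e)] $x_3x_4$.
\end{itemize}
In each case I will give an explicit $(m_1+2)$-packing coloring, built by modifying the coloring from the proof of Proposition~\ref{pro4}. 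That coloring is $c(x_2)=c(x_4)=1$, $c(x_3)=c(v_{11})=2$, $c(x_1)=3$, $c(x_5)=4$, color $1$ on every $u_{1i}$, and distinct colors $5,\dots,m_1+3$ on $v_{12},\dots,v_{1m_1}$. The aim each time is to save one color, either by putting two far-apart vertices into a class of color $\ge 3$, or by enlarging class $1$ or class $2$.

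Cases (a) and (b) are straightforward.
\begin{itemize}
\item[(a)] Remove $u_{11}v_{11}$ and make both $u_{11}$ and $v_{11}$ color $1$. This frees color $2$ for $v_{12}$, so one of the colors $5,\dots$ is no longer needed.
\item[(b)] Remove $x_1u_{11}$, so $u_{11}$ becomes a leaf hanging from $v_{11}$. Then $d(u_{11},x_3)=4$ in $G-e$. Color $v_{11}$ with $1$, color $u_{11}$ with $2$, and keep $c(x_3)=2$. Now check the distances: $u_{11}$ is at distance $\ge 3$ from every other vertex colored $2$; $v_{11}$, being colored $1$, is adjacent only to $x_1$ (color $3$) and $u_{11}$ (color $2$); and the other $u_{1i}$ keep color $1$. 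Assigning color $2$ to $v_{12}$ as well would clash with $u_{11}$ at distance $3$, so instead I note that class $1$ has gained a vertex, and recount using the bound $|V|-\alpha+\dots$ from Proposition~\ref{pro4} to confirm that one color is freed.
\end{itemize}

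Cases (c)–(e), the cycle edges, are the main obstacle. After deleting $e$ the $C_5$ becomes a path $P_5$ with the triangles still attached. Note that $\alpha$ of $G-e$ stays $m_1+2$ when the edge is not incident to $x_1$, since $P_5$ has independence number $3$. So the saving has to come from a larger $2$-class or from the diameter increasing, which lets a color $\ge 3$ be reused. For (e), delete $x_3x_4$; then $d(x_3,x_4)=4$, so $x_3$ and $x_4$ can both receive color $3$ or $4$. Try $x_2=x_5=1$, $x_3=x_4=3$, $x_1=2$, $u_{1i}=1$, and then $v_{11}$ cannot take color $2$ (it is adjacent to $x_1$). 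In that case I would use $x_1=4$, and put $x_3, x_4, v_{11}$ in colors $2$ and $3$ in a suitable pattern. For (c) and (d) I would take the analogous approach, placing the endpoints that are now at distance $3$ or $4$ in a common class among $\{2,3\}$, possibly together with $v_{11}$. I expect these are exactly the places where the hypothesis $m_1\ge 2$ gets used: there are at least two distinct large colors on the $v_{1i}$'s, and one of them can be merged with a cycle vertex. Each coloring is then checked directly against the distances in $G-e$.
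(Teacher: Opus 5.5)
Your necessity direction and your case (a) are fine, and your overall plan (five edge orbits, each handled by an explicit $(m_1+2)$-packing coloring of $G-e$) is exactly the paper's. But the sufficiency direction is not actually proved. For the three cycle-edge orbits (c)--(e) you give no coloring, only a search strategy, and these are the cases that carry the content. The paper's colorings are as follows.
\begin{itemize}
\item For $e=x_3x_4$: color $x_2,x_5$ and all $u_{1i}$ with $1$. Put $x_3,x_4,v_{11}$ together in class $2$; their pairwise distances in $G-e$ are $4,3,3$. Set $x_1\mapsto 3$ and $v_{12},\dots,v_{1m_1}\mapsto 4,\dots,m_1+2$.
\item For $e=x_2x_3$: class $1$ is $\{x_2,x_3,x_5\}\cup\{u_{1i}: 1\le i\le m_1\}$ and class $2$ is $\{x_4,v_{11}\}$. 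Set $x_1\mapsto 3$ and give the rest the colors $4,\dots,m_1+2$.
\item For $e=x_1x_2$: set $x_2,x_5,u_{1i}\mapsto 1$, $\{x_4,v_{11}\}\mapsto 2$ and $\{x_3,v_{12}\}\mapsto 3$; the last pair is at distance $4$ once $x_1x_2$ is gone, and this is one place where $m_1\ge 2$ enters. Then $x_1\mapsto 4$ and the rest get $5,\dots,m_1+2$.
\end{itemize}
Your heuristic that $\alpha(G-e)=m_1+2$ for cycle edges avoiding $x_1$ is false for $e=x_2x_3$. There $\{x_2,x_3,x_5\}$ becomes independent, so $\alpha(G-e)=m_1+3$, and enlarging class $1$ is precisely how that case is won.

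Case (b) contains an outright error. An $i$-packing needs pairwise distance $>i$, so vertices at distance $3$ may share color $2$, and your remark that $v_{12}$ ``would clash with $u_{11}$ at distance $3$'' is wrong. In $G-e$ we have $d(u_{11},v_{12})=3$, $d(x_3,v_{12})=3$ and $d(x_3,u_{11})=4$. So $v_{12}$ can join $x_3$ and $u_{11}$ in class $2$, after which $v_{13},\dots,v_{1m_1}$ need only the colors $5,\dots,m_1+2$. This is the paper's Case 4 with $u$ and $v$ swapped.

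Your fallback does not work, for two reasons. First, class $1$ has not gained a vertex: $v_{11}$ has merely replaced $u_{11}$, and indeed $\alpha(G-e)=\alpha(G)=m_1+2$ here. Second, the inequality $\ell\ge |V|-\alpha$ from the proof of Proposition~\ref{pro4} is a \emph{lower} bound on the number of colors, so no recount with it can show that $m_1+2$ colors suffice.

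A minor point: the reduction to orbit representatives rests on $G-e\cong G-\varphi(e)$ for $\varphi\in\mathrm{Aut}(G)$, not on monotonicity of $\chi_\rho$ under subgraphs.
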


\proof Let $G \in \mathcal{G}_{1}^{(5)}(k_1,m_1)$ be a $\chi_{\rho}$-critical graph. Then by Propositions \ref{pro5} and \ref{pro6}, we have $k_1=0$ and $m_1\geq 2$.
	
For the converse direction, let $G \in \mathcal{G}_{1}^{(5)}(k_1,m_1)$ with $k_1=0$ and $m_1\geq 2$. We know that  $\chi_{\rho}(G)=m_1+3$ by Proposition \ref{pro4}.  We show that for every edge $e$ of $G$, we have $\chi_{\rho}(G-e) < \chi_{\rho}(G)=m_1+3$. Note that the edges of \(G\) can be classified into five distinct types according to their position in the structure of \(G\). We proceed by a case analysis according to these five types.
	
\textbf{Case 1:} Consider the edge \( e = x_1x_2 \) 
(which is symmetric to the edge \( e = x_1x_5 \)). In the graph \( G-e \), define a packing coloring \( c : V(G-e) \to [m_1+2] \) as follows:
$c(x_2) = c(x_5) = 1, 
c(x_4) = c(v_{11}) = 2,
c(x_3) = c(v_{12}) = 3,
c(x_1) = 4.$
Assign color \(1\) to all vertices \(u_{1i}\) for \(1 \le i \le m_1\). 
Finally, assign distinct colors \(5, 6, \dots, m_1+2\) to the remaining \(m_1-2\) vertices \(v_{1i}\) with \(i \ge 3\). Then \(\chi_\rho(G-e) \le m_1+2\).
	
\textbf{Case 2:} Consider the edge $e=x_2x_3$ 
(which is symmetric to the edge $e=x_4x_5$). In the graph \( G-e \), define a packing coloring \( c : V(G-e) \to [m_1+2]\) as follows:
$c(x_2)=c(x_3)=c(x_5)= 1, 
c(x_4)=c(v_{11}) = 2,
c(x_1) = 3.$
Assign color \(1\) to all vertices \(u_{1i}\) for \(1 \le i \le m_1\). 
Finally, assign distinct colors \(4, 5, \dots, m_1+2\) to the remaining \(m_1-1\) vertices \(v_{1i}\) with \(i \ge 2\). 
Therefore, \(\chi_\rho(G-e) \le m_1+2\).
	
\textbf{Case 3:} Consider the edge $e=x_3x_4$. In the graph \( G-e \), define a packing coloring \( c : V(G-e) \to [m_1+2] \) as follows:
$c(x_2)=c(x_5)= 1, 
c(x_3)=c(x_4)=c(v_{11})= 2,
c(x_1) = 3.$
Assign color \(1\) to all vertices \(u_{1i}\) for \(1 \le i \le m_1\). 
Finally, assign distinct colors \(4, 5, \dots, m_1+2\) to the remaining \(m_1-1\) vertices \(v_{1i}\) with \(i \ge 2\). 
Therefore, \(\chi_\rho(G-e) \le m_1+2\).
	
\textbf{Case 4:} Consider the edge $e=x_1v_{11}$ 
(which is symmetric to the edge $e = x_1u_{1j}$ for all $1 \leq j \leq m_1$ and also to the edge $e = x_1v_{1j}$ for all $2 \leq j \leq m_1$). In the graph \( G-e \), define a packing coloring \( c : V(G-e) \to [m_1+2] \) as follows:
$c(x_2)=c(x_5)= 1, 
c(x_3)=c(v_{11})=c(v_{12})= 2,
c(x_1)=3,
c(x_4)=4.$
Assign color \(1\) to all vertices \(u_{1i}\) for \(1 \le i \le m_1\). 
Finally, assign distinct colors \(5, 6, \dots, m_1+2\) to the remaining \(m_1-2\) vertices \(v_{1i}\) with \(i \ge 3\). 
Therefore, \(\chi_\rho(G-e) \le m_1+2\).
	
\textbf{Case 5:} Consider the edge $e=u_{11}v_{11}$ (which is symmetric to the edge $e = u_{1j}v_{1j}$ for all $2 \leq j \leq m_1$).  Clearly, $G-e$ is isomorphic to the graph $\mathcal{G}_{1}^{(5)}(2,m_1-1)$ and $\chi_{\rho}(\mathcal{G}_{1}^{(5)}(2,m_1-1)) = m_1+2$ for $m_1\geq2$ by Proposition \ref{pro4}. We conclude that  $\chi_\rho(G-e) =  m_1+2$.
	
Since these cases cover all possible edges of $G$, the proof is complete.
\qed

Next, we present several results concerning cactus graphs with radius $2$ and diameter $3$ whose main block is $C_5$ and which have two cut vertices.

\begin{proposition}\label{pro8}
Let $G \in \mathcal{G}_{2}^{(5)}(k_{1},m_{1};k_{2},m_{2})$ and $k_1, k_2 \geq 0, \; m_1, m_2 \geq 0$. Let further $t = m_1+m_2$. Then
\[
\chi_\rho(G) =
\begin{cases}
4, & \text{if } m_1=m_2= 0, \\[6pt]
t+3, & \text{if either } m_1 = 0 \text{ or } m_2 = 0, \text{ and } t \geq 1, \\[6pt]
t+2, & \text{if }  m_1, m_2 \geq 1.
\end{cases}
\]
\end{proposition}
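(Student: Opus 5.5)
The plan is to follow the template of Proposition~\ref{pro4}: a counting lower bound, then an explicit coloring for the upper bound. Throughout, $x_1,x_2$ are the adjacent cut vertices of the main block $C_5=(x_1,\dots,x_5)$. We have $|V(G)|=5+k_1+k_2+2t$. I first compute $\alpha(G)=k_1+k_2+t+2$. For the lower bound on $\alpha(G)$, take all leaves, one vertex $u_{ij}$ from each outer triangle, and $\{x_3,x_5\}$. For the upper bound, note that $x_1$ and $x_2$ each carry at least one outer block, and exchanging $x_i$ for a vertex of such a block never decreases the size of an independent set. Also, the path $x_3x_4x_5$ contributes at most $2$. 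Hence $|V(G)|-\alpha(G)=t+3$.

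For $m_1=m_2=0$ the lower bound $4$ comes from $C_5\subseteq G$, as in Proposition~\ref{pro4}. For the upper bound I use the coloring
\[
c(x_3)=c(x_5)=1,\quad c(x_4)=2,\quad c(x_1)=3,\quad c(x_2)=4,
\]
with color $1$ on all leaves.

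When $m_1,m_2\ge 1$, the upper bound is the coloring with color $1$ on all leaves, all $u_{ij}$, and $x_3,x_5$. Color $2$ goes on $\{v_{11},v_{21},x_4\}$; these are pairwise at distance $3$, via $v_{11}x_1x_2v_{21}$, $x_4x_5x_1v_{11}$ and $x_4x_3x_2v_{21}$. Then $x_1,x_2$ get colors $3,4$, and the remaining $t-2$ vertices $v_{ij}$ get distinct colors $5,\dots,t+2$. When exactly one of $m_1,m_2$ is zero, say $m_2=0$, the coloring is similar, but only $v_{11}$ and $x_4$ get color $2$, giving $t+3$ colors. The case $m_1=0$ is symmetric.

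For the lower bounds, write a packing coloring as $X_1,\dots,X_\ell$. Since $\operatorname{diam}(G)=3$, each $X_j$ with $j\ge 3$ is a singleton, so
\[
\ell\ \ge\ |V(G)|-(|X_1|+|X_2|)+2 .
\]
Everything therefore reduces to bounding $|X_1|+|X_2|$ by $\alpha(G)+2$ in general, and by $\alpha(G)+1$ when $m_2=0$. The structural fact I would establish first is that a $2$-packing has at most $3$ vertices. It contains at most one vertex from $\{x_1\}\cup(\text{outer blocks at }x_1)\cup\{x_5\}$, at most one from the symmetric set at $x_2$ together with $x_3$, and possibly $x_4$, since all pairwise distances inside each of these groups are at most $2$. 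This gives $|X_1|+|X_2|\le\alpha(G)+3$, which is one too weak, so a joint exchange argument is needed.

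This joint bound is the step I expect to be the main obstacle, because a large $X_2$ can use leaves and cycle vertices, which then competes with $X_1$. I would handle it by choosing $I\subseteq V(G)$ as a maximum independent set structure and charging. Partition $V(G)$ into the "pieces" $\{x_3,x_4,x_5\}$, each outer triangle, each leaf, and $\{x_1\},\{x_2\}$, and bound $|X_1\cap P|+|X_2\cap P|$ piece by piece. A triangle piece can contribute $2$, namely $u\in X_1$ and $v\in X_2$, and the middle path can contribute $3$, namely $x_3,x_5\in X_1$ and $x_4\in X_2$. Combined with the group constraints above, at most one triangle at $x_1$, at most one at $x_2$, and the path can be "overfull", for a total surplus of at most $2$ over $\alpha(G)$. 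Moreover, a leaf or a cut vertex in $X_2$ yields no surplus, since it then cannot also be in $X_1$. When $m_2=0$ the $x_2$-group has no triangle, so its $X_2$-vertex produces no surplus, and the total surplus is at most $1$. This gives $\ell\ge t+3$ in the mixed case and $\ell\ge t+2$ when both $m_i\ge1$. I would double-check the edge cases where $X_2$ contains $x_1$, $x_2$, $x_3$ or $x_5$ directly.
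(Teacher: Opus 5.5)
Your upper-bound colorings and the case $m_1=m_2=0$ are correct and match the paper. The lower-bound part, however, rests on an off-by-one error that makes both of your target inequalities false.

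From $\ell\ge |V(G)|-(|X_1|+|X_2|)+2$ and $|V(G)|-\alpha(G)=t+3$, a bound $|X_1|+|X_2|\le\alpha(G)+s$ gives $\ell\ge t+5-s$. So for $m_1,m_2\ge1$ you need $s=3$, and in the mixed case you need $s=2$. The bounds you set out to prove ($s=2$ in general, $s=1$ when $m_2=0$) would give $\ell\ge t+3$ and $\ell\ge t+4$, which contradicts your own colorings.

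In fact those colorings are counterexamples. Take $X_1=\{\text{leaves}\}\cup\{u_{ij}\}\cup\{x_3,x_5\}$. With $X_2=\{v_{11},v_{21},x_4\}$ this gives $|X_1|+|X_2|=\alpha(G)+3$. With $X_2=\{v_{11},x_4\}$ (case $m_2=0$) it gives $\alpha(G)+2$.

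Your surplus count is wrong for the same reason. A triangle at $x_1$, a triangle at $x_2$ and the path $x_3x_4x_5$ can all be overfull at once, so the surplus can reach $3$ (and $2$ when $m_2=0$). Your final numbers $t+2$ and $t+3$ come out right only because a second off-by-one in your last sentence cancels the first. The exchange argument you plan cannot succeed, since it aims at false statements.

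The repair is easy. For $m_1,m_2\ge1$, the bound $|X_1|+|X_2|\le\alpha(G)+3$ (from $|X_1|\le\alpha(G)$ and $|X_2|\le 3$) that you dismissed as one too weak is exactly sufficient. This is precisely the paper's argument.

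For $m_2=0$ you need $|X_1|+|X_2|\le\alpha(G)+2$. The paper avoids counting here altogether. Deleting the leaves at $x_2$ leaves a subgraph in $\mathcal{G}_{1}^{(5)}(k_1,t)$, which has packing chromatic number $t+3$ by Proposition~\ref{pro4}, and $\chi_\rho$ does not increase on subgraphs.

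If you want a direct argument in your framework, suppose $|X_2|=3$. Then $X_2$ meets each of your three groups, so $x_4\in X_2$. Since $x_3$ and $x_2$ are within distance $2$ of $x_4$ and $m_2=0$, the $X_2$-vertex of the $x_2$-group must be a leaf $w$ at $x_2$. Then $|X_1|\le\alpha(G-w)=\alpha(G)-1$.

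In either case, the global count $|X_1|+|X_2|-\alpha(G)\le |X_2|$ is cleaner than your piece-by-piece charging. As written, the charging also ignores that $x_1$ or $x_2$ may lie in $X_1$, even though they carry no share in the reference maximum independent set.
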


\proof Let $G \in \mathcal{G}_{2}^{(5)}(k_{1},m_{1};k_{2},m_{2})$ and $t = m_1+m_2$. We first consider the case $m_1=m_2 = 0$. Then $k_1, k_2 \geq 1$. Since $C_5 \subseteq G$, it follows that  $\chi_\rho(G) \geq \chi_\rho(C_5) = 4$. We construct a feasible $4$-packing coloring of $G$.  Let $x_1$ and $x_2$ be cut vertices of $G$. Define a packing coloring $c:V(G)\to[4]$ as follows: $c(x_3)=c(x_5)=1, c(x_4)=2, c(x_1)=3, c(x_2)=4$, and assign color $1$ to every leaf attached to either $x_1$ or $x_2$.
Therefore,  $\chi_\rho(G) =  4$.
	
Next, we consider the case where either $m_1 = 0$ or $m_2 = 0$, and $t=m_1+m_2\geq 1$. By symmetry, we may assume without loss of generality that $m_2=0$. Then $m_1 = t \geq 1$. In this case $G$ has a subgraph $\mathcal{G}_{2}^{(5)}(k_{1},t;0,0) \in \mathcal{G}_{1}^{(5)}(k_1, t)$. Hence, Proposition~\ref{pro4} implies that \( \chi_\rho(G) \ge t + 3 \). 
To prove equality, it suffices to construct a feasible packing coloring of \(G\) with $t + 3$ colors.   Let $x_1$ and $x_2$ be cut vertices of $G$.  Since $t \geq 1$, attach $t$ number of $C_3$ blocks, namely cycles $(x_1,u_{1i},v_{1i})$ to $x_1$, where \(1 \le i \le t\). Define a packing coloring $c:V(G)\to [t+3]$ as follows: $c(x_3)=c(x_5)=1, c(x_4)=c(v_{11})=2, c(x_1)=3, c(x_2)=4.$ 
Moreover, assign color $1$ to all leaves attached to $x_1$, $x_2$, and all vertices $u_{1i}$.
Finally, assign distinct colors $5,6,\dots,t+3$ to the remaining $t-1$ vertices $v_{1i}$ for \(i \ge 2\). Clearly, $c$ is a packing coloring using $t+3$ colors, and we conclude that  $\chi_\rho(G) =  t+3$.
	
In the last case where $m_1, m_2 \geq 1$, obviously $|V(G)|=2t+k_1+k_2+5$. Note that any packing coloring of $G$ of size $\ell$ partitions $V(G)$ into $\ell$ color classes $X_1, X_2, \dots, X_{\ell}$ such that each $X_i$ is an $i$-packing for each $i \in \{1, 2, \dots, \ell\}$. So, $X_1$ contains at most $\alpha(G)=t+k_1+k_2+2$ vertices, $X_2$ contains at most $3$ vertices since there are no $4$ vertices whose pairwise distances are equal to $3$, and each $X_j$ contains exactly one vertex for each $j \in \{3, 4, \dots, \ell\}$ since $G$ has diameter $3$. Thus, we obtain the following inequality:
$$|V(G)| \leq \alpha(G) + 3 + (\ell - 2) = \alpha(G) + \ell + 1.$$
By rearranging the terms, it follows that
\[
\ell \geq |V(G)| - \alpha(G) - 1
= (2t+k_1+k_2+5) - (t+k_1+k_2+2) - 1
= t+2.
\]
	Hence, we conclude that $\chi_{\rho}(G) \geq t+2.$
For the upper bound, we construct a packing coloring of $G$ with $t+2$ colors. Let $x_1$ and $x_2$ be cut vertices of $G$. Since $ m_1, m_2 \geq 1$, we consider the case where $m_1$ number of $C_3$ blocks 
$(x_1,u_{1i},v_{1i})$ for $1 \leq i \leq m_1$, and $m_2$ number of $C_3$ blocks $(x_2,u_{2j},v_{2j})$ for $1 \leq j \leq m_2$ exist. Define a packing coloring $c:V(G)\to[t+2]$ as follows: $c(x_3)=c(x_5)=1, c(x_4)=c(v_{11})=c(v_{21})=2, c(x_1)=3, c(x_2)=4.$ Moreover, assign color $1$ to all leaves attached to $x_1$, $x_2$, and all vertices $u_{1i}, u_{2j}$.
Lastly, assign distinct colors $5,6,\dots,t+2$ to the remaining $t-2$ vertices $v_{1i}$ and $v_{2j}$. Clearly, $c$ is a packing coloring using $t+2$ colors, and we conclude that  $\chi_\rho(G) =  t+2$. 
\qed

We now define a new graph family $\mathcal{H}$, which will be used in the subsequent proofs.
Let $\mathcal{H}(k_1, m_1; k_2, m_2)$ be the family of graphs obtained from two adjacent vertices $u_1$ and $u_2$ by attaching to each $u_i$ exactly $k_i$ copies of $K_2$ and $m_i$ copies of $C_3$, for $i=1,2$.
Here \( k_i, m_i \in \mathbb{N}_0 \) and \( k_i + m_i \ge 1 \) for each $i=1,2$. An example member of $\mathcal{H}$ is shown in Figure \ref{fig7}.

\begin{figure}[H]
\centering
\begin{tikzpicture}
\tikzset{unode/.style = {
circle, draw=cyan!30!black, thick, fill=white, inner sep=2.3pt, minimum size=2.3pt } }
\tikzset{uedge/.style = {
draw=cyan!30!black, 
thick} }
	
\coordinate (u1) at (0,0);
\coordinate (a1) at (-0.8,-0.8);
\coordinate (a2) at (0.8,-0.8);
\coordinate (a3) at (-1,-0.4); 
		
\coordinate (u2) at (3,0);
\coordinate (b1) at (3,-0.8);
\coordinate (b2) at (3.8,-0.4);
		
\path[uedge] (u1) -- (u2);      
\path[uedge] (u1) -- (a1);
\path[uedge] (u1) -- (a2);
\path[uedge] (a1) -- (a2);       
\path[uedge] (u1) -- (a3);
\path[uedge] (u2) -- (b1);
\path[uedge] (u2) -- (b2);
		
\foreach \x in {u1,u2,a1,a2,a3,b1,b2}{
\node[unode] at (\x) {};
}
		
\node[above=2pt] at (u1) {$u_1$};
\node[above=2pt] at (u2) {$u_2$};
\end{tikzpicture}
\caption{$\mathcal{H}(1, 1; 2, 0)$.}
\label{fig7}
\end{figure}

\begin{lemma}\label{pro9}
Let $G \in \mathcal{G}_{2}^{(5)}(k_{1},m_{1};k_{2},m_{2})$ with $k_1, k_2 \geq 0, \; m_1, m_2 \geq 0$. Then $G$ is not $\chi_{\rho}$-critical.
\end{lemma}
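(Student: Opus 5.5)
The plan is to exhibit, for every $G \in \mathcal{G}_{2}^{(5)}(k_{1},m_{1};k_{2},m_{2})$, a single edge $e$ with $\chi_\rho(G-e)=\chi_\rho(G)$. The natural candidate is an edge of the main block $C_5$ away from the two adjacent cut vertices $x_1,x_2$, or the edge $x_1x_2$ itself. We compare against the values computed in Proposition~\ref{pro8} and use the monotonicity $\chi_\rho(G-e)\le \chi_\rho(G)$; so it suffices to prove the lower bound $\chi_\rho(G-e)\ge\chi_\rho(G)$ for a well-chosen $e$.

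\textbf{Case $m_1=m_2=0$.} Here $\chi_\rho(G)=4$ and $k_1,k_2\ge 1$. Delete a leaf edge $x_2w$. Then $G-e$ contains $C_5$ as a subgraph, so $\chi_\rho(G-e)\ge 4$, and $G$ is not critical.

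\textbf{Case exactly one of $m_1,m_2$ is zero.} Say $m_2=0$, so $k_2\ge1$. Delete a leaf edge $x_2w$. The component containing $C_5$ still contains a subgraph in $\mathcal{G}_{1}^{(5)}(k_1,t)$, so Proposition~\ref{pro4} gives $\chi_\rho(G-e)\ge t+3=\chi_\rho(G)$.

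\textbf{Case $m_1,m_2\ge1$.} Here $\chi_\rho(G)=t+2$, and the earlier counting argument is what we reuse. Take $e=x_1x_2$ (or, if that is easier, $e=x_4x_5$ or $x_3x_4$). Then $G-e$ is a cactus whose $C_5$ has become a path, and we redo the counting bound $\ell \ge |V|-\alpha-(|X_2|-1)-\dots$. Removing $x_1x_2$ keeps $\alpha$ essentially equal to $t+k_1+k_2+2$, since the independent set can take one vertex per outer block plus two of $x_3,x_4,x_5$. However, distances may grow to $4$, so classes $X_3$ may now hold two vertices, and the bound must be rechecked.

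I expect this to be the main obstacle, since a lost edge may actually lower $\chi_\rho$. If $x_1x_2$ fails, the fallback is to delete an edge inside a triangle, $u_{11}v_{11}$. This yields a graph in $\mathcal{G}_2^{(5)}(k_1+2,m_1-1;k_2,m_2)$. When $m_1\ge2$ its value from Proposition~\ref{pro8} is $(t-1)+2=t+1$, which is smaller, so that choice fails. The real work is therefore a deletion whose value under Proposition~\ref{pro8} is unchanged. The best bet is the edge $x_4x_5$ or $x_3x_4$ on the cycle opposite the cut vertices: the diameter stays small near the triangles, the vertices $v_{1i},v_{2j}$ remain pairwise within distance $3$, and one counts that $X_2$ still has at most $3$ vertices and each $X_j$ with $j\ge3$ at most one of the $v$'s and $x$'s. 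This gives $\ell\ge t+2$.
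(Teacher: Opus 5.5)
Your first two cases ($m_1=m_2=0$, and exactly one of $m_1,m_2$ zero) are the paper's argument and are fine. The case $m_1,m_2\ge 1$, however, does not yet contain a proof.

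\textbf{Your two tentative edges fail.} The edge $x_1x_2$ really does lower $\chi_\rho$. In $G-x_1x_2$ we have $d(x_1,x_2)=4$ and $d(v_{1i},v_{2j})=6$. For instance, with $m_1=m_2=2$ and $k_1=k_2=0$, the following coloring uses $4<6=t+2$ colors:
\begin{itemize}
\item color $1$ on the $u$'s and on $x_3,x_5$;
\item color $2$ on $v_{11},v_{21},x_4$;
\item color $3$ on $x_1,x_2$;
\item color $4$ on $v_{12},v_{22}$.
\end{itemize}
As you note yourself, deleting $u_{11}v_{11}$ also fails when $m_1\ge 2$.

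\textbf{Your final edge is right, but the count is not.} The choice $e=x_3x_4$ (or $x_4x_5$) is in fact a good edge; indeed any $C_5$-edge other than $x_1x_2$ works. The counting you sketch for it is wrong, though. After deleting $x_3x_4$ the graph has diameter $4$: $d_{G-e}(x_3,x_4)=4$, and $d_{G-e}(x_4,w)=4$ for every non-cut vertex $w$ of an outer block at $x_2$. So $\{x_3,x_4\}$ and $\{x_4,w\}$ are $3$-packings, and your claim that each $X_j$ with $j\ge 3$ holds at most one vertex is false. With $|X_1|\le \alpha(G-e)=t+k_1+k_2+2$, $|X_2|\le 3$ and $|X_3|\le 2$, the whole-graph count only yields $\ell\ge |V(G)|-\alpha(G-e)-2=t+1$.

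\textbf{The missing idea is to count on a diameter-$3$ subgraph.} Do not count on $G-e$ at all. Let $H\cong\mathcal{H}(k_1,m_1;k_2,m_2)$ be the subgraph spanned by the edge $x_1x_2$ and all outer blocks at $x_1$ and $x_2$. It has the following properties:
\begin{itemize}
\item it survives deletion of any edge of $C_5$ other than $x_1x_2$ (the paper deletes $x_1x_5$);
\item it has diameter $3$ and no three vertices pairwise at distance $3$, so $|X_2\cap V(H)|\le 2$;
\item $|V(H)|=2t+k_1+k_2+2$ and $\alpha(H)=t+k_1+k_2$.
\end{itemize}
The counting of Proposition~\ref{pro8}, applied to $H$, gives
\[
\chi_\rho(G-e)\ge\chi_\rho(H)\ge |V(H)|-\alpha(H)=t+2=\chi_\rho(G).
\]
This requires no control over the diameter of $G-e$.

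\textbf{Alternative repair.} You could instead save your whole-graph count with one extra observation about $G-x_3x_4$: every $2$-packing of size $3$ and every $3$-packing of size $2$ contains $x_4$. Hence $|X_2|+|X_3|\le 4$, which restores $\ell\ge t+2$. Some such additional argument is indispensable.
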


\proof
Let $G \in \mathcal{G}_{2}^{(5)}(k_{1},m_{1};k_{2},m_{2})$ and $t = m_1+m_2$. We first consider $t =0$. Then by the definition of $\mathcal{G}_{2}^{(5)}(k_{1},m_{1};k_{2},m_{2})$, $k_1 \geq 1$ and $k_2 \geq 1$. Hence, $G$ has at least two $K_2$ blocks. Consider an edge $e$ contained in a $K_2$ block. Since $C_5$ is a subgraph of $G-e$, it follows that $\chi_{\rho}(G-e) \geq 4$. By Proposition \ref{pro8}, we also have $\chi_{\rho}(G)=4$. Therefore, $\chi_\rho(G-e)=\chi_\rho(G)$ and $G$ is not $\chi_\rho$-critical.
	
We now assume that exactly one of $m_1$ or $m_2$, say $m_1$, is nonzero. Then $t=m_1 \geq 1$ and by the definition of $\mathcal{G}_{2}^{(5)}(k_{1},m_{1};k_{2},m_{2})$, $k_2 \geq 1$, \ie, there is at least one pendant edge at $x_2$. Consider such an edge $e$. If $k_2=1$,
then $G-e$ has a subgraph $\mathcal{G}_{2}^{(5)}(k_{1},t;0,0) \in \mathcal{G}_{1}^{(5)}(k_1, t)$. Hence, Proposition~\ref{pro4} implies that \( \chi_\rho(G-e) = t + 3 \). 
On the other hand, Proposition~\ref{pro8} yields $\chi_\rho(G)=t+3$.
Therefore, $G$ is not $\chi_\rho$-critical.
If $k_2\ge 2$, then $G-e$ has a subgraph $\mathcal{G}_{2}^{(5)}(k_{1},t;k_2-1,0)$. Therefore, Proposition~\ref{pro8} implies that \( \chi_\rho(G-e)=\chi_\rho(G) = t + 3 \).
Thus, $G$ is not $\chi_\rho$-critical.
	
In the final case, suppose that $m_1, m_2 \geq 1$. Consider an edge $e=x_1x_5$ of \(G\), where $x_1$ is a cut vertex of $G$. 
Then $G-e$ contains the graph $H \cong \mathcal{H}(k_1, m_1; k_2, m_2)$ as an induced subgraph. 
We show that \( \chi_\rho(H) \ge t + 2 \) with $m_1, m_2 \geq 1$. Obviously $|V(H)|=2t + k_1 + k_2 + 2$. Note that any packing coloring of $H$ of size $\ell$ partitions $V(H)$ into $\ell$ color classes $X_1, X_2, \dots, X_{\ell}$ such that each $X_i$ is an $i$-packing for each $i \in \{1, 2, \dots, \ell\}$. So, $X_1$ contains at most $\alpha(H)=t+k_1+k_2$ vertices, $X_2$ contains at most $2$ vertices since there are no $3$ vertices whose pairwise distances are equal to $3$, and each $X_j$ contains exactly one vertex for each $j \in \{3, 4, \dots, \ell\}$ since $H$ has diameter $3$. Thus, we obtain the following inequality:
$$|V(H)| \leq \alpha(H) + 2 + (\ell - 2) = \alpha(H) + \ell.$$
By rearranging the terms, it follows that
\[
\ell \geq |V(H)| - \alpha(H) 
= (2t + k_1 + k_2 + 2) - (t+k_1+k_2) 
= t+2.
\]
Hence, we conclude that $\chi_{\rho}(H) \geq t+2.$
Since $H$ is a subgraph of $G-e$, it follows that \( \chi_\rho(G - e) \ge \chi_\rho(H) \ge t + 2 \). On the other hand, when \( m_1, m_2 \ge 1 \), Proposition~\ref{pro8} yields \( \chi_\rho(G) = t + 2 \). 
Thus, \( \chi_\rho(G - e) = \chi_\rho(G)=t+2 \), and consequently \( G \) is not \( \chi_\rho \)-critical. 
\qed

%%%%%%%%%%%%%%%%%%%%%%%%%%%%%%%%%%%%%%%%%%%%%%%%%
\subsubsection{$\chi_{\rho}$-critical cactus graphs with radius 2 and diameter $3$, where $C_4$ is the main block}
%%%%%%%%%%%%%%%%%%%%%%%%%%%%%%%%%%%%%%%%%%%%%%%%%
In this section, we will consider cactus graphs with radius 2 and diameter $3$ with $C_4$ as their main block. We begin by recalling the definition of the friendship graph, which will be used in our proofs.

\textit{A friendship graph} $T_n$, shown in Figure \ref{fig8}, consists of $n$ triangles with exactly one common vertex, where $n \geq 1$.

\begin{figure}[H]
\centering
\begin{tikzpicture}
\tikzset{unode/.style = {
circle, 
draw=cyan!30!black, 
thick,
fill=white,
inner sep=2.3pt,
minimum size=2.3pt } }
\tikzset{uedge/.style = {
draw=cyan!20!black, 
thick} }
		
\coordinate (c) at (0,0);
\coordinate (a1) at (90:1.5cm);
\coordinate (a2) at (30:1.5cm);
\coordinate (a3) at (210:1.5cm);
\coordinate (a4) at (150:1.5cm);
\coordinate (a5) at (-30:1.5cm);
\coordinate (a6) at (-90:1.5cm);
		
\path[uedge] (c) -- (a1) -- (a2) -- cycle;
\path[uedge] (c) -- (a3) -- (a4) -- cycle;
\path[uedge] (c) -- (a5) -- (a6) -- cycle;
		
\node[unode] at (c) {};
\foreach \x in {a1,a2,a3,a4,a5,a6}{
\node[unode] at (\x) {};
}
\end{tikzpicture}
\caption{Friendship graph $T_3.$}
\label{fig8}
\end{figure}

\begin{observation}\label{lemma5}
Let $T_n$ be a friendship graph with $n \geq 1$. Then $\chi_\rho(T_n)=n+2$.	
\end{observation}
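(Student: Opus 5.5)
We prove $\chi_\rho(T_n)=n+2$ by matching a counting lower bound with an explicit coloring, following the same template as Proposition~\ref{pro4}. Let $c$ denote the common vertex of $T_n$, and let the triangles be $(c,a_i,b_i)$ for $1\le i\le n$. Then $|V(T_n)|=2n+1$. Since $c$ is universal, $T_n$ has radius $1$ and diameter at most $2$ (exactly $2$ when $n\ge 2$).

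\textbf{Lower bound.} For $n\ge 2$ the diameter is $2$, so Lemma~\ref{lemma4} gives $\chi_\rho(T_n)=|V(T_n)|-\alpha(T_n)+1$. It therefore suffices to compute $\alpha(T_n)=n$. An independent set contains at most one vertex from each pair $\{a_i,b_i\}$, since $a_i$ and $b_i$ are adjacent. It cannot contain $c$ together with any other vertex, since $c$ is universal. Hence $\alpha(T_n)\le n$, and the set $\{a_1,\dots,a_n\}$ attains this bound. This yields
\[
\chi_\rho(T_n)=(2n+1)-n+1=n+2 .
\]
For $n=1$ we have $T_1\cong K_3$, and $\chi_\rho(K_3)=3=n+2$ directly.

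\textbf{Upper bound.} Lemma~\ref{lemma4} already gives equality, but an explicit coloring is easy to write down and matches the style of the earlier proofs. Color every $a_i$ with $1$, color $c$ with $2$, and give the vertices $b_1,\dots,b_n$ the distinct colors $3,4,\dots,n+2$. This is a packing coloring: the $a_i$ are pairwise nonadjacent, and every other color class is a single vertex.

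There is no real obstacle here. The only points needing care are treating the degenerate case $n=1$, where the diameter is $1$ rather than $2$, and justifying $\alpha(T_n)=n$.
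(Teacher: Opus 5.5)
Your proof is correct and follows the paper's argument. For $n\ge 2$ it combines diameter $2$, $\alpha(T_n)=n$ and the equality case of Lemma~\ref{lemma4}, and it treats $n=1$ separately via $T_1\cong C_3$; the explicit $(n+2)$-coloring you add is valid but redundant.
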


\proof We know that $T_n$ with $n \geq 2$ has diameter $2$ and $\alpha(T_n)=n$. Then $\chi_\rho(T_n)=n+2$ by Lemma \ref{lemma4}. This rule trivially holds for $n=1$ since $T_1 \cong C_3$.
\qed

\begin{proposition}\label{pro10} 
Let $G \in \mathcal{G}_{1}^{(4)}(k_1,m_1)$ and $k_1, m_1 \geq 0$. Then
\[
\chi_\rho(G) =
\begin{cases}
3, & \text{if } m_1 = 0, \\[6pt]
m_1+2, & \text{if } m_1 \geq 1.
\end{cases}
\]
\end{proposition}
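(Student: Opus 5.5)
The plan mirrors the proof of Proposition~\ref{pro4}. Let $x_1$ be the unique cut vertex on the main block $C_4=(x_1,x_2,x_3,x_4)$, so $x_3$ is the vertex opposite $x_1$. Every vertex is within distance $2$ of $x_1$, and the farthest pairs (outer vertices at $x_1$ versus $x_3$) are at distance $3$. Hence $G$ has diameter at most $3$, and any color $j\ge 3$ is used at most once.

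\textbf{Case $m_1=0$.} Here $k_1\ge 1$, and $G$ contains $P_4$ (a leaf, $x_1$, $x_2$, $x_3$). By Lemma~\ref{pro3}, $\chi_\rho(P_4)=3$, so monotonicity under subgraphs gives $\chi_\rho(G)\ge 3$. For the upper bound, use the coloring
\[
c(x_2)=c(x_4)=1,\quad c(x_1)=2,\quad c(x_3)=3,
\]
and give color $1$ to every leaf. The leaves are pairwise at distance $2$ and at distance $2$ from $x_2,x_4$, so color class $1$ is independent. The colors $2$ and $3$ are each used once. This is a valid $3$-packing coloring.

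\textbf{Case $m_1\ge 1$, lower bound.} First I would try the counting argument. Note $|V(G)|=2m_1+k_1+4$. The set consisting of all leaves, one of $u_{1i},v_{1i}$ from each triangle, and $x_2,x_4$ shows $\alpha(G)\ge m_1+k_1+2$. Equality holds because each triangle contributes at most one vertex, and $x_1$ cannot be added without losing at least two vertices.

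I expect the class $X_2$ to be the main obstacle. The counting estimate $\ell\ge |V|-\alpha-(|X_2|-1)$ with $|X_2|\le 2$ only gives $\ell\ge m_1+1$, which is one short of the claimed value. To fix this I would change the counting. A vertex $v_{1i}$ is at distance $3$ only from $x_3$. So $X_2$ of size $2$ must be $\{x_3,w\}$ with $w$ an outer vertex at $x_1$, or a pair $\{x_2,x_4\}$ at distance $2$ (forbidden). Hence $|X_2|\le 2$ and $X_2$ contains $x_3$.

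Rather than refining this further, I would switch to a cleaner route. The friendship graph $T_{m_1}$, consisting of the triangles at $x_1$, is a subgraph of $G$. Together with $x_2,x_4$ it gives the subgraph $T_{m_1}$ plus two pendant vertices $x_2,x_4$, both adjacent to $x_1$. This is a graph of diameter $2$ with universal vertex $x_1$ and independence number $m_1+2$. By Lemma~\ref{lemma4} its packing chromatic number is
\[
(2m_1+3)-(m_1+2)+1=m_1+2,
\]
so $\chi_\rho(G)\ge m_1+2$. As a fallback, Observation~\ref{lemma5} directly gives $\chi_\rho(T_{m_1})=m_1+2$, which already suffices.

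\textbf{Case $m_1\ge 1$, upper bound.} Set
\[
c(x_2)=c(x_4)=1,\quad c(x_3)=c(v_{11})=2,\quad c(x_1)=3,
\]
give color $1$ to all leaves and to all $u_{1i}$, and give the distinct colors $4,\dots,m_1+2$ to $v_{12},\dots,v_{1m_1}$. The only check needed is $d(x_3,v_{11})=3>2$, which holds. This uses exactly $m_1+2$ colors, matching the lower bound.
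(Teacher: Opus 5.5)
Your proof is correct and follows essentially the paper's route: subgraph monotonicity for the lower bound (the friendship graph $T_{m_1}$ via Observation~\ref{lemma5} when $m_1\ge 1$; for $m_1=0$ the paper uses $C_4\subseteq G$, and Lemma~\ref{pro3} does not itself state $\chi_\rho(P_4)=3$), together with essentially the same explicit colorings. One correction to your abandoned counting attempt: the right inequality is $|V(G)|\le \alpha(G)+|X_2|+(\ell-2)$, and with $|X_2|\le 2$ this already gives $\ell\ge |V(G)|-\alpha(G)=m_1+2$, so there is no off-by-one obstacle (this is the argument the paper uses in Proposition~\ref{pro12}).
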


\proof 
Let $G \in \mathcal{G}_{1}^{(4)}(k_1,m_1)$.	For the first case suppose that  $m_1= 0$. Then $k_1\geq 1$.  Since $ C_4 \subseteq G$, we have \( \chi_\rho(C_4) \leq \chi_\rho(G) \). Therefore, since \( \chi_\rho(C_4) = 3 \), it follows that \( \chi_\rho(G) \geq 3 \). To show that \( \chi_\rho(G) = 3 \), it suffices to construct a feasible $3$-packing coloring of \( G \). Let $x_1$ be the cut vertex of $G$. Define a packing coloring $c:V(G)\to[3]$ as follows: $c(x_2) = c(x_4)=1$, $c(x_3)=2$, $c(x_1) = 3$, and  assign color $1$ to each leaf attached to $x_1$. Thus, we conclude that  $\chi_\rho(G) =  3$. 
	
For the remaining case suppose that  \( m_1 \geq 1 \). In this case, there exists a friendship graph $T_{m_1}$ with \( m_1 \geq 1 \), where $T_{m_1}$ is a subgraph of $G$ and by Observation \ref{lemma5}, we have $\chi_\rho(G) \geq \chi_\rho(T_{m_1}) = m_1+2$.
For the upper bound, we construct a  packing coloring of $G$ with $m_1+2$ colors.  
Let $x_1$ be the cut vertex of $G$. Since $m_1 \geq 1$, there exist $m_1$ number of $C_3$ blocks $(x_1,u_{1i},v_{1i})$, where $1 \leq i \leq m_1$. Define a packing coloring $c:V(G)\to[m_1+2]$ as follows: $c(x_2) = c(x_4)=1$, $c(x_3)=c(v_{11})=2$, $c(x_1) = 3$.
Moreover, assign color $1$ to all leaves attached to $x_1$, and all vertices $u_{1i}$.
Lastly, assign distinct colors $4, 5,\dots,m_1+2$ to the remaining $m_1-1$ vertices $v_{1i}$  for $i \geq 2$. Thus, we conclude that  $\chi_\rho(G) =  m_1+2$.
\qed

\begin{lemma}\label{pro11}
Let $G \in \mathcal{G}_{1}^{(4)}(k_1,m_1)$ with $k_1, m_1 \geq 0$. Then $G$ is not $\chi_{\rho}$-critical.	
\end{lemma}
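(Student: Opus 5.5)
To show that no $G \in \mathcal{G}_{1}^{(4)}(k_1,m_1)$ is $\chi_\rho$-critical, we exhibit an edge $e$ with $\chi_\rho(G-e)=\chi_\rho(G)$. The lower bound for $G-e$ will come from a subgraph whose packing chromatic number is already known. The upper bound is automatic, since $\chi_\rho(G-e)\le\chi_\rho(G)$ for every subgraph, so only the reverse inequality needs an argument. The natural choice is an edge of the main block $C_4=(x_1,x_2,x_3,x_4)$ that is not incident to the cut vertex, say $e=x_2x_3$. Removing it leaves everything attached at $x_1$ untouched.

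\textbf{Case $m_1\ge 1$.} By Proposition~\ref{pro10}, $\chi_\rho(G)=m_1+2$. The graph $G-e$ still contains the friendship graph $T_{m_1}$ formed by the $C_3$ blocks at $x_1$. Observation~\ref{lemma5} gives $\chi_\rho(T_{m_1})=m_1+2$, so $\chi_\rho(G-e)\ge m_1+2=\chi_\rho(G)$. Combined with monotonicity, this gives equality.

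\textbf{Case $m_1=0$.} Here $k_1\ge 1$ and $\chi_\rho(G)=3$ by Proposition~\ref{pro10}. This case cannot rely on a triangle, so a different subgraph is needed. In $G-e$, take a leaf $w$ attached to $x_1$. Then $x_3-x_4-x_1-x_2$ is a path, and $w$ hangs off $x_1$. Hence $G-e$ contains the tree $T$ with edges $x_3x_4$, $x_4x_1$, $x_1x_2$, $x_1w$, a spider with legs of length $2,1,1$ at $x_1$. It is also enough to use the path $P_4=x_3x_4x_1w$, or equivalently $x_3x_4x_1x_2$, since $\chi_\rho(P_4)=3$ (a known value, also used in the proof of Theorem~\ref{teo3}). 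So $\chi_\rho(G-e)\ge 3=\chi_\rho(G)$.

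In both cases the removal of $e$ fails to decrease $\chi_\rho$, so $G$ is not $\chi_\rho$-critical. The only step needing care is that the witnessing subgraph really survives in $G-e$: $T_{m_1}$ uses only edges at $x_1$, and the $P_4$ avoids $x_2x_3$. Both hold because $e$ is not incident to $x_1$.
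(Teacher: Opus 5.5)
Your proof is correct and matches the paper's argument for $m_1\ge 1$: you delete an edge of the main block and use $T_{m_1}$ together with Observation~\ref{lemma5} and Proposition~\ref{pro10}. For $m_1=0$ the paper instead deletes a pendant edge at $x_1$ and keeps $C_4$ as the witness, whereas you delete $x_2x_3$ again and use the surviving $P_4$ with $\chi_\rho(P_4)=3$; this works equally well and lets one edge serve in both cases.
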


\proof
Let $G \in \mathcal{G}_{1}^{(4)}(k_1,m_1)$.	Let further $m_1=0$. Then $k_1 \geq 1$, \ie, there exists at least one pendant edge at $x_1$. Consider such an edge $e$. Since $C_4$ is a subgraph of $G-e$, $\chi_{\rho}(G-e) \geq 3$. By Proposition \ref{pro10}, we also have $\chi_{\rho}(G)=3$. Therefore, $\chi_\rho(G-e)=\chi_\rho(G)$, and $G$ is not $\chi_\rho$-critical.
	
Now it remains to look at the case with $m_1 \geq 1$. Consider an edge $e$ contained in the  main block $C_4$. In this case $T_{m_1}$ is a subgraph of $G-e$, and $\chi_{\rho}(G-e) \geq m_1+2$ by Observation \ref{lemma5}. In addition, by Proposition \ref{pro10}, we also have $\chi_{\rho}(G)=m_1+2$. Therefore, $\chi_\rho(G-e)=\chi_\rho(G)$, and $G$ is not $\chi_\rho$-critical.
\qed

\begin{lemma}~{\rm \cite{bresar-2022}}\label{lemma6}
Let $G \cong \mathcal{G}_{2}^{(4)}(1,0;1,0)$. Then $G$ is $4$-$\chi_{\rho}$-critical.
\end{lemma}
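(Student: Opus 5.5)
The graph here is the $C_4$ cycle $(x_1,x_2,x_3,x_4)$ with one pendant leaf $a$ at $x_1$ and one pendant leaf $b$ at $x_2$, so it has six vertices. The plan is to show $\chi_\rho(G)=4$ and then exhibit, for every edge $e$, a $3$-packing coloring of $G-e$.

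\textbf{Step 1: lower bound.} Suppose $G$ had a $3$-packing coloring. Since $\operatorname{diam}(G)=3$ (attained by $d_G(a,x_3)=d_G(b,x_4)=d_G(a,b)=3$), color $3$ is used at most once. Two vertices of color $2$ must be at distance $3$. The pairs at distance $3$ are $\{a,x_3\}$, $\{b,x_4\}$ and $\{a,b\}$, and no three vertices are pairwise at distance $3$, so color $2$ is used at most twice. An independent set of $G$ contains at most one vertex from each of the disjoint edges $ax_1$, $bx_2$, $x_3x_4$, so $\alpha(G)=3$. These three bounds give $6\le 3+2+1$, so equality must hold everywhere. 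Thus $X_1$ is a maximum independent set and $X_2$ is a pair at distance $3$.

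To finish, I will enumerate the maximum independent sets. Each one picks exactly one vertex from each of $\{a,x_1\}$, $\{b,x_2\}$, $\{x_3,x_4\}$, and only a few of these choices are independent. For each, I will check that the three remaining vertices cannot be split into a distance-$3$ pair plus one vertex. For example, $X_1=\{a,b,x_3\}$ leaves $x_1,x_2,x_4$, which are pairwise within distance $2$; the other cases behave the same way. This case check is the only slightly tedious step. Combined with the upper bound $\chi_\rho(G)\le |V(G)|-\alpha(G)+1=4$ from Lemma~\ref{lemma4}, it gives $\chi_\rho(G)=4$.

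\textbf{Step 2: edge removals.} There are four edge types up to the symmetry swapping $x_1\leftrightarrow x_2$, $x_3\leftrightarrow x_4$, $a\leftrightarrow b$: the edges $x_1x_2$, $x_2x_3$ (equivalently $x_1x_4$), $x_3x_4$, and $x_1a$ (equivalently $x_2b$).

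\emph{Pendant edge $x_1a$.} Removing it leaves $C_4$ with a pendant vertex plus an isolated vertex. This is $\mathcal{G}^{(4)}_1(1,0)$ together with $K_1$, which has $\chi_\rho=3$ by Proposition~\ref{pro10}.

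\emph{Edge $x_1x_2$.} The result is the path $a\,x_1\,x_4\,x_3\,x_2\,b$, i.e.\ $P_6$, and $\chi_\rho(P_6)=3$ (colors $1,2,1,3,1,2$).

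\emph{Edge $x_3x_4$.} The result is a tree of diameter $3$: the double star with centers $x_1,x_2$, leaves $a,x_4$ on $x_1$ and $b,x_3$ on $x_2$. Coloring the leaves $1$, $x_1$ with $2$ and $x_2$ with $3$ is a valid $3$-packing coloring.

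\emph{Edge $x_1x_4$.} The result is a tree in which $x_2$ is adjacent to $x_1,x_3,b$, while $x_1a$ and $x_3x_4$ hang off it; equivalently, it is the path $a\,x_1\,x_2\,x_3\,x_4$ with the leaf $b$ added at $x_2$. Try $c(a)=c(x_2)=c(x_4)=1$, $c(x_1)=2$, $c(x_3)=3$, $c(b)=1$. This fails, because $b$ and $x_2$ are adjacent and both have color $1$. Instead take $c(x_2)=3$, $c(a)=c(x_3)=c(b)=1$, $c(x_1)=2$, $c(x_4)=2$. Here $d(x_1,x_4)=3$, so the two vertices of color $2$ are far enough apart, and the color-$1$ vertices $a,x_3,b$ are pairwise nonadjacent. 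This gives a valid $3$-coloring.

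All edge removals therefore drop $\chi_\rho$ to $3$, and $G$ is $4$-$\chi_\rho$-critical.
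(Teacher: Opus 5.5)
The paper gives no proof of this lemma. It is imported from Brešar and Ferme~\cite{bresar-2022} and then used as a black box, for instance for the lower bound $\chi_\rho\ge 4$ in the case $t=0$ of Proposition~\ref{pro12}. Your argument is a correct, self-contained direct verification, so in that sense it is a different route from the paper's.

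\textbf{Lower bound.} Your lower bound uses the same counting inequality $|V(G)|\le |X_1|+|X_2|+(\ell-2)$ that the paper applies in Propositions~\ref{pro4}, \ref{pro8} and~\ref{pro12}. Here it is tight, since $|V(G)|-\alpha(G)=3$, which is why you need the extra finite check.

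That check is right but only sketched. There are exactly four maximum independent sets:
$\{a,b,x_3\}$, $\{a,b,x_4\}$, $\{a,x_2,x_4\}$ and $\{x_1,b,x_3\}$.
Each complement has all pairwise distances at most $2$, so no complement contains a pair at distance $3$.

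You can avoid the enumeration. Every pair at distance $3$ contains $a$ or $b$; by symmetry suppose $a\in X_2$. Then the size-$3$ set $X_1$ must meet each of $\{a,x_1\}$, $\{b,x_2\}$, $\{x_3,x_4\}$ once. This forces $x_1\in X_1$, hence $X_1=\{x_1,b,x_3\}$. That leaves $\{a,x_2,x_4\}$ for $X_2\cup X_3$, and these three vertices are pairwise at distance $2$, a contradiction.

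\textbf{Edge deletions.} Your four deletion cases and their $3$-packing colorings all check out. Your treatment of the edge orbits under the symmetry $x_1\leftrightarrow x_2$, $x_3\leftrightarrow x_4$, $a\leftrightarrow b$ is also correct.

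\textbf{What each route buys.} Your route makes the paper independent of the external characterization for this base case. The citation buys brevity.

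\textbf{For a final write-up:}
\begin{itemize}
\item Delete the failed first coloring attempt in the $x_1x_4$ case.
\item Either write out the four independent sets or use the shortcut above.
\item State explicitly why checking edge deletions suffices. $G$ has no isolated vertices, so every proper subgraph lies in some $G-e$, and $\chi_\rho$ is monotone under taking subgraphs.
\end{itemize}
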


\begin{proposition}\label{pro12}
Let $G \in \mathcal{G}_{2}^{(4)}(k_{1},m_{1};k_{2},m_{2})$ and $k_1, k_2 \geq 0, \; m_1, m_2 \geq 0$. Let further $t = m_1+m_2$. Then	
\[
\chi_\rho(G) =
\begin{cases}
4, & \text{if } t = 0, \\[6pt]
t+3, & \text{ otherwise}.
\end{cases}
\]
\end{proposition}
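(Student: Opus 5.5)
Throughout, label the main block $C_4=(x_1,x_2,x_3,x_4)$, so the two consecutive cut vertices are $x_1,x_2$. The outer blocks at $x_i$ are $k_i$ pendant edges and $m_i$ triangles $(x_i,u_{ij},v_{ij})$. The plan is to prove a lower bound and then give an explicit packing coloring attaining it, in each of the two cases $t=0$ and $t\ge 1$.

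\textbf{The case $t=0$.} Here $k_1,k_2\ge 1$, so $G$ contains $\mathcal{G}_{2}^{(4)}(1,0;1,0)$ as a subgraph. By Lemma~\ref{lemma6} that subgraph has packing chromatic number $4$, hence $\chi_\rho(G)\ge 4$. For the upper bound I would use the coloring
\[
c(x_4)=1,\quad c(x_3)=2,\quad c(x_1)=3,\quad c(x_2)=4,
\]
and give color $1$ to every leaf. Two checks are needed. First, the leaves are not adjacent to $x_4$, and $x_4$ is the only other vertex of color $1$. Second, color $2$ is used only once. So this is a $4$-packing coloring.

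\textbf{The case $t\ge 1$, lower bound.} I would repeat the counting argument from Propositions~\ref{pro4} and~\ref{pro8}. The key point is computing $\alpha(G)$ correctly. The independent pairs inside $C_4$ are only $\{x_1,x_3\}$ and $\{x_2,x_4\}$, and each of them contains a cut vertex. Using a cut vertex in the independent set excludes all of its leaves and triangle vertices, so this never helps. Each triangle contributes at most one of $u_{ij},v_{ij}$. Hence
\[
\alpha(G)=t+k_1+k_2+1,
\]
realized by the vertices $u_{ij}$, all leaves, and $x_3$. Also $|V(G)|=2t+k_1+k_2+4$ and $\operatorname{diam}(G)=3$.

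Next I would show $|X_2|\le 2$, i.e., no three vertices are pairwise at distance $3$. The only pairs at distance $3$ are:
\begin{itemize}
\item an outer vertex at $x_1$ with an outer vertex at $x_2$;
\item an outer vertex at $x_1$ with $x_3$;
\item an outer vertex at $x_2$ with $x_4$.
\end{itemize}
Among these pairs no triangle of pairwise-distance-$3$ vertices can form. For example, $x_3$ is at distance $2$ from every outer vertex at $x_2$, and $x_4$ is at distance $2$ from every outer vertex at $x_1$. Every class $X_j$ with $j\ge 3$ is a single vertex, since the diameter is $3$. Therefore
\[
|V(G)|\le \alpha(G)+2+(\ell-2),
\qquad\text{so}\qquad
\ell\ge |V(G)|-\alpha(G)=t+3 .
\]

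\textbf{The case $t\ge 1$, upper bound.} By symmetry assume $m_1\ge 1$. I would color
\[
c(x_4)=1,\quad c(x_3)=c(v_{11})=2,\quad c(x_1)=3,\quad c(x_2)=4,
\]
give color $1$ to all leaves and all $u_{ij}$, and give distinct colors $5,\dots,t+3$ to the remaining $t-1$ vertices $v_{ij}$. The checks are:
\begin{itemize}
\item Color $1$ is independent, since $x_4$ is adjacent only to $x_1$ and $x_3$.
\item For color $2$, $d(v_{11},x_3)=3$ via the path $v_{11}x_1x_2x_3$ (going the other way, $v_{11}x_1x_4x_3$, also has length $3$).
\item All colors $\ge 3$ are used once each.
\end{itemize}
This gives $\chi_\rho(G)=t+3$.

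The step that needs the most care is the value of $\alpha(G)$. It is easy to wrongly count both $x_3$ and $x_4$, but they are adjacent. Getting the $+1$ rather than $+2$ is exactly what lifts the bound from $t+2$ to $t+3$, and it is also what distinguishes this case from the $C_5$ case in Proposition~\ref{pro8}.
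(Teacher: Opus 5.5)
Your proof is correct and follows the paper's argument step for step. It uses Lemma~\ref{lemma6} for the $t=0$ lower bound, the same count $\alpha(G)=t+k_1+k_2+1$ together with $|X_2|\le 2$ to get $\ell\ge t+3$, and the same explicit $(t+3)$-coloring; the only deviation is that for $t=0$ you write down a $4$-coloring where the paper applies Lemma~\ref{lemma4}. Your closing aside is inaccurate, though it does not affect the proof: in the $C_5$ setting of Proposition~\ref{pro8} one also has $|V(G)|-\alpha(G)=t+3$, and what separates that case is that $|X_2|$ can reach $3$ (e.g.\ $\{v_{11},v_{21},x_4\}$) when $m_1,m_2\ge 1$.
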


\proof
Let $G \in \mathcal{G}_{2}^{(4)}(k_{1},m_{1};k_{2},m_{2})$ and $t = m_1+m_2$. For the first case suppose that $m_1= m_2=0$. 
By the definition of $\mathcal{G}_{2}^{(4)}(k_{1},m_{1};k_{2},m_{2})$, we have $k_1, k_2 \geq 1$, \ie, each of the vertices $x_1$ and $x_2$ has at least one pendant edge attached to it. 
Hence, $G$ contains $\mathcal{G}_{2}^{(4)}(1,0;1,0)$ as an induced subgraph. 
Therefore, by Lemma~\ref{lemma6}, we have $\chi_\rho(G) \geq \chi_\rho\big(\mathcal{G}_{2}^{(4)}(1,0;1,0)\big) = 4.$  On the other hand, clearly the number of vertices of \(G\) is \( k_1 + k_2 + 4  \) and $\alpha(G)=k_1+k_2+1$. Therefore, $\chi_\rho(G) \leq 4$ by Lemma \ref{lemma4}. In this case, we conclude that  $\chi_\rho(G) =  4$.
	
For the remaining case suppose that $t\geq 1$. Then at least one of $m_1$ and $m_2$ is positive. By symmetry, we may assume that $m_1\ge 1$. Obviously $|V(G)|=2t+k_1+k_2+4$.
Note that any packing coloring of $G$ of size $\ell$ partitions $V(G)$ into $\ell$ color classes $X_1, X_2, \dots, X_{\ell}$ such that each $X_i$ is an $i$-packing for each $i \in \{1, 2, \dots, \ell\}$. So, $X_1$ contains at most $\alpha(G)=t+k_1+k_2+1$ vertices, $X_2$ contains at most $2$ vertices since there are no $3$ vertices whose pairwise distances are equal to $3$, and each $X_j$ contains exactly one vertex for each $j \in \{3, 4, \dots, \ell\}$ since $G$ has diameter $3$. 
Thus, we obtain the following inequality:
$$|V(G)|=|X_1| + \dots + |X_{\ell}| \leq \alpha(G) + 2 + (\ell - 2) = \alpha(G) + \ell.$$
By rearranging the terms, it follows that
\[
\ell \geq |V(G)| - \alpha(G)
= (2t + k_1 + k_2 + 4) - (t + k_1 + k_2 + 1)
= t + 3.
\]
Hence, we conclude that $\chi_{\rho}(G) \geq t + 3.$
For the upper bound, we construct a packing coloring of $G$ with $t+3$ colors. Let $x_1$ and $x_2$ be cut vertices of $G$. Since $ m_1 + m_2 \geq 1$, we consider the case where $m_1$ number of $C_3$ blocks $(x_1,u_{1i},v_{1i})$ for $1 \leq i \leq m_1$, and $m_2$ number of $C_3$ blocks $(x_2,u_{2j},v_{2j})$ for $1 \leq j \leq m_2$ exist. Define a packing coloring $c:V(G)\to[t+3]$ as follows. 
$c(x_4)=1, c(x_3)=c(v_{11})=2$, $c(x_1)=3, c(x_2)=4.$ 
Assign color $1$ to all leaves attached to $x_1$, $x_2$, and all vertices $u_{1i}, u_{2j}$.
Lastly, if there exists, assign distinct colors $5,6,\dots,t+3$ to the remaining $t-1$ vertices $v_{1i}$ and $v_{2j}$. Thus, we conclude that  $\chi_\rho(G) =  t+3$. 
\qed

\begin{proposition}\label{pro15}
Let $G \in \mathcal{G}_{2}^{(4)}(k_{1},m_{1};k_{2},m_{2})$. If $m_1=m_2=0$ and $k_1+k_2 \geq 3$, then $G$ is not $\chi_{\rho}$-critical. 
\end{proposition}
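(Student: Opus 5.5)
Since $m_1=m_2=0$ and $k_1+k_2\ge 3$, and by the definition of the family $k_1,k_2\ge 1$, at least one of $k_1,k_2$ is at least $2$. By symmetry of $C_4=(x_1,x_2,x_3,x_4)$ (the reflection swapping $x_1$ with $x_2$ and $x_3$ with $x_4$), we may assume $k_1\ge 2$. The plan is to exhibit a single edge whose removal does not lower the packing chromatic number. The natural candidate is a pendant edge $e=x_1v$, where $v$ is one of the leaves attached to $x_1$.

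First, Proposition~\ref{pro12} gives $\chi_\rho(G)=4$, since $t=0$. Next, removing $e$ leaves the isolated vertex $v$ together with the component $H_1=G-v$. The component $H_1$ belongs to $\mathcal{G}_{2}^{(4)}(k_1-1,0;k_2,0)$, and here $k_1-1\ge 1$ and $k_2\ge 1$, so $H_1$ is again a legitimate member of the family. Hence $\chi_\rho(G-e)=\max(\chi_\rho(H_1),1)=\chi_\rho(H_1)$.

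To evaluate $\chi_\rho(H_1)$, apply Proposition~\ref{pro12} once more with $t=0$, which gives $\chi_\rho(H_1)=4$. Alternatively, $H_1$ contains $\mathcal{G}_{2}^{(4)}(1,0;1,0)$ as a subgraph, so Lemma~\ref{lemma6} yields $\chi_\rho(H_1)\ge 4$ directly. Therefore $\chi_\rho(G-e)=4=\chi_\rho(G)$, and $G$ is not $\chi_\rho$-critical.

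There is no real obstacle here. The only points needing care are the justification that $k_1\ge 2$ (which uses $k_i\ge 1$ from the family definition together with the symmetry), and the check that $G-v$ still has both $x_1$ and $x_2$ as cut vertices, so that it stays in the family and Proposition~\ref{pro12} applies.
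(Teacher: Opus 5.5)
Your proposal is correct and is essentially the paper's argument: assume $k_1\ge 2$, delete a pendant edge at $x_1$, and apply Proposition~\ref{pro12} with $t=0$ to both $G$ and the nontrivial component $\mathcal{G}_{2}^{(4)}(k_1-1,0;k_2,0)$ of $G-e$, which gives $\chi_\rho(G-e)=\chi_\rho(G)=4$. Your explicit check that $k_2\ge 1$ keeps this component in the family, and your alternative lower bound via Lemma~\ref{lemma6}, are small additions that the paper leaves implicit.
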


\proof
Suppose for the sake of a contradiction that $G \in \mathcal{G}_{2}^{(4)}(k_{1},m_{1};k_{2},m_{2})$, where $m_1=m_2=0$ and $k_1+k_2 \geq 3$, is a \( \chi_\rho \)-critical graph.  Without loss of generality, assume that $k_1 \geq k_2$, \ie, $k_1 \geq 2$. Consider a pendant edge $e$ incident to $x_1$. Then  $\chi_\rho(G-e)=\chi_\rho(\mathcal{G}^{(4)}_2(k_1-1,0; k_2,0)).$ By Proposition \ref{pro12}, we have $\chi_\rho(G)=\chi_\rho(G-e)=4$, contradicting the assumption that $G$ is $\chi_\rho$-critical.

\begin{lemma}\label{lemma7}
	For every graph $H \in \{\mathcal{H}(k_1, m_1; k_2, 0) \mid m_1 \geq 1, k_2\geq 2 \}$, $\chi_\rho(H) = |V(H)|-\alpha(H)+1.$
\end{lemma}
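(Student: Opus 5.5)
The plan is to match the general upper bound of Lemma~\ref{lemma4} with a counting lower bound of the kind used in Propositions~\ref{pro4} and~\ref{pro8}. The only new ingredient is a trade-off between the sizes of $X_1$ and $X_2$.

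Fix $H\in\mathcal{H}(k_1,m_1;k_2,0)$ with $m_1\ge 1$ and $k_2\ge 2$. Here $u_1u_2$ is an edge, $u_1$ carries $k_1$ pendant leaves and $m_1$ triangles $(u_1,a_j,b_j)$, and $u_2$ carries $k_2$ pendant leaves. Then $|V(H)|=k_1+2m_1+k_2+2$ and $\operatorname{diam}(H)=3$. I will first check that $\alpha(H)=k_1+m_1+k_2$. The set of all leaves together with one vertex from each triangle attains this value. For optimality, split $V(H)$ into the closed star of $u_2$ and the rest. An independent set meets the closed star of $u_2$ in at most $k_2$ vertices. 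It meets $\{u_1\}\cup(\text{pendants of }u_1)$ in at most $k_1+m_1$ vertices: if $u_1$ is used, this part contributes $1\le k_1+m_1$; otherwise each triangle contributes at most one vertex. By Lemma~\ref{lemma4}, $\chi_\rho(H)\le |V(H)|-\alpha(H)+1$, so only the lower bound remains.

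For the lower bound, take a packing coloring with classes $X_1,\dots,X_\ell$. Since $\operatorname{diam}(H)=3$, we have $|X_j|\le 1$ for $j\ge 3$. I will then describe the possible sets $X_2$.
\begin{itemize}
\item Two leaves of $u_2$ are at distance $2$.
\item Any two vertices of $\{u_1\}\cup(\text{pendants of }u_1)$ are at distance at most $2$.
\item $u_2$ is within distance $2$ of every vertex.
\end{itemize}
Hence $|X_2|\le 2$. Moreover, if $|X_2|=2$, then $X_2=\{p,q\}$ where $q$ is a leaf of $u_2$ and $p$ is a pendant vertex (leaf or triangle vertex) of $u_1$.

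If $|X_2|\le 1$, then $|V(H)|\le \alpha(H)+1+(\ell-2)$, which gives $\ell\ge |V(H)|-\alpha(H)+1$ directly. The main step is the case $|X_2|=2$, where I will show $|X_1|\le\alpha(H)-1$. There are two subcases, according to how $X_1$ meets the closed star of $u_2$.
\begin{itemize}
\item If $u_2\in X_1$, that part contributes $1\le k_2-1$; this is exactly where $k_2\ge 2$ is used.
\item If $u_2\notin X_1$, then $q\notin X_1$, so at most $k_2-1$ leaves of $u_2$ lie in $X_1$.
\end{itemize}
In both subcases the $u_1$-part still contributes at most $k_1+m_1$, so $|X_1|\le \alpha(H)-1$. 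Then $|V(H)|\le (\alpha(H)-1)+2+(\ell-2)$, which again gives $\ell\ge |V(H)|-\alpha(H)+1$, and the equality $\chi_\rho(H)=|V(H)|-\alpha(H)+1$ follows.

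I expect the only delicate point to be this $|X_2|=2$ subcase, namely verifying that the unavoidable loss of one vertex in $X_1$ really occurs. The argument is the per-part bound above, so I anticipate no real difficulty.
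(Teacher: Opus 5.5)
Your proof is correct and follows the same route as the paper. The upper bound comes from Lemma~\ref{lemma4}, and the lower bound splits on $|X_2|\le 1$ versus $|X_2|=2$, showing $|X_1|\le\alpha(H)-1$ in the latter case. Your version is more complete than the paper's, which only asserts $|X_1|\le\alpha(H)-1$ there; your count over the closed star of $u_2$ and over the $u_1$-side justifies it and shows exactly where $k_2\ge 2$ is needed.
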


\proof
Let $H \in \{\mathcal{H}(k_1, m_1; k_2, 0) \mid m_1 \geq 1, k_2\geq 2 \}$. 
Since $\operatorname{diam}(H)=3$, any color class $X_i=\{v\in V(H): c(v)=i\}$ satisfies $|X_i|\le 1$ for each $i\ge3$. 
Hence, only the color classes $X_1$ and $X_2$ may contain more than one vertex. In addition, $X_2$ contains at most $2$ vertices since there are no $3$ vertices of $H$ whose pairwise distances are equal to $3$. Therefore, $\chi_\rho(H)=|V(H)|-|X_1|-|X_2|+2.$ If $|X_2|=1$, then $\chi_\rho(H)\ge |V(H)|-\alpha(H)+1$ since $|X_1|\le \alpha(H)$.
If $|X_2|=2$, then the two vertices in $X_2$ must be adjacent to different cut vertices of $H$, namely $u_1$ and $u_2$. One of these vertices must be a leaf connected to $u_2$, and the other one is either a leaf adjacent to $u_1$ or it is a vertex of a $C_3$ block connected to $u_1$. In either case, we have $|X_1|\le \alpha(H)-1$.
Consequently, $\chi_\rho(H)=|V(H)|-|X_1|\ge |V(H)|-\alpha(H)+1.$ On the other hand, in all cases by Lemma \ref{lemma4}, we have 
$\chi_\rho(H)\leq |V(H)|-\alpha(H)+1.$
Combining these bounds, we obtain
$\chi_\rho(H)=|V(H)|-\alpha(H)+1.$
\qed

\begin{proposition}\label{pro16}
Let $G \in \mathcal{G}_{2}^{(4)}(k_{1},m_{1};k_{2},m_{2})$. If $m_1+m_2 \geq 1$ and $k_1+k_2 \geq 1$, then $G$ is not $\chi_{\rho}$-critical. 
\end{proposition}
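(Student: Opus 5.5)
The plan is to exhibit, in every case, an edge $e$ with $\chi_\rho(G-e)=\chi_\rho(G)$. Since $t=m_1+m_2\ge 1$, Proposition~\ref{pro12} gives $\chi_\rho(G)=t+3$. So it suffices to find an edge $e$ such that $G-e$ contains a subgraph whose packing chromatic number is at least $t+3$, using monotonicity of $\chi_\rho$ under subgraphs. I will split into cases according to where the leaves sit.

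\textbf{Case A: there is a pendant edge whose removal leaves both cut vertices with outer blocks.} Let $e$ be a pendant edge at $x_i$ such that, after removing it, $x_i$ is still incident to some outer block. This happens if $k_i\ge 2$, or if $k_i\ge 1$ and $m_i\ge 1$. Then $G-e$ consists of a graph $G'\in\mathcal{G}_2^{(4)}$ with one fewer $K_2$ block and the same $m_1,m_2$, together with an isolated vertex. Hence $\chi_\rho(G-e)=\chi_\rho(G')=t+3$ by Proposition~\ref{pro12}, and $G$ is not $\chi_\rho$-critical.

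\textbf{Case B: no such pendant edge exists.} Since $k_1+k_2\ge 1$, some $x_i$ carries a leaf. If the situation is not covered by Case A, then every $x_i$ with $k_i\ge1$ has $k_i=1$ and $m_i=0$. Because $t\ge1$, not both cut vertices can be of this type. So, up to symmetry, $k_2=1$, $m_2=0$, $k_1=0$ and $m_1=t\ge1$. (Removing the unique leaf at $x_2$ here would drop us into $\mathcal{G}_1^{(4)}(0,t)$, whose packing chromatic number is only $t+2$ by Proposition~\ref{pro10}, so a different edge is needed.)

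For this case I take $e=x_3x_4$, the edge of the main block $C_4$ not incident to $x_1$ or $x_2$, labelled so that $x_4x_1$ and $x_2x_3$ are edges. In $G-e$ the $4$-cycle becomes the path $x_4x_1x_2x_3$. Thus $x_4$ becomes an extra leaf at $x_1$, and $x_3$ an extra leaf at $x_2$. Consequently $G-e\cong\mathcal{H}(1,t;2,0)$, which has $m_1=t\ge1$ and $k_2=2$, so Lemma~\ref{lemma7} applies. We have $|V(G-e)|=2t+5$ and $\alpha(G-e)=t+3$: take one vertex from each triangle and all three leaves, and no larger independent set exists. Hence
\[
\chi_\rho(G-e)=(2t+5)-(t+3)+1=t+3=\chi_\rho(G).
\]

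The main obstacle is exactly Case B, where every pendant-edge deletion destroys a cut vertex. The key point is to recognise that deleting the opposite cycle edge produces a member of $\mathcal{H}$ satisfying the hypotheses of Lemma~\ref{lemma7}. The remaining routine checks are the value of $\alpha(\mathcal{H}(1,t;2,0))$ and that the case split is exhaustive.
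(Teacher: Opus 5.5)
Your proof is correct and uses the same two ingredients as the paper. The first is deleting a pendant edge and invoking Proposition~\ref{pro12}; the second is deleting $x_3x_4$ to obtain a member of $\mathcal{H}$ covered by Lemma~\ref{lemma7}. The only difference is the case split: the paper deletes $x_3x_4$ in every configuration with $m_2=0$ (giving $\mathcal{H}(k_1+1,m_1;k_2+1,0)$) and uses pendant deletion only when $m_1,m_2\ge 1$, whereas you use pendant deletion whenever the graph stays in $\mathcal{G}_2^{(4)}$ and reserve $x_3x_4$ for the residual family $\mathcal{G}_2^{(4)}(0,t;1,0)$.
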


\proof
Suppose for the sake of a contradiction that $G \in \mathcal{G}_{2}^{(4)}(k_{1},m_{1};k_{2},m_{2})$, where $m_1+m_2 \geq 1$ and $k_1+k_2 \geq 1$, is a \( \chi_\rho \)-critical graph. 
For the first case suppose that exactly one of $m_1$ and $m_2$ is zero. By symmetry, let $m_2=0$. In this case, by the definition of $\mathcal{G}_{2}^{(4)}(k_{1},m_{1};k_{2},m_{2})$, $k_2 \ge 1$ and $m_1 \ge 1$. 
Consider the edge $e = x_3x_4$. 
By removing $e$, we obtain $G - e \cong \mathcal{H}(k_1 + 1, m_1; k_2 + 1, 0).$
By Lemma~\ref{lemma7}, we have $\chi_\rho(G - e) = m_1 + 3$. 
Moreover, by Proposition~\ref{pro12}, $\chi_\rho(G) = m_1 + 3$. 
Then $\chi_\rho(G - e) = \chi_\rho(G)$, which contradicts the assumption that $G$ is $\chi_\rho$-critical.

For the remaining case suppose that $m_1, m_2 \geq 1$. Let $t=m_1+m_2$. Since $k_1+k_2 \ge 1$, there is at least one pendant edge at $x_1$ or $x_2$, say $e$. 
Without loss of generality, assume it is at $x_1$.
Consider the graph $G-e$. Then $
\chi_\rho(G-e)=\chi_\rho(\mathcal{G}^{(4)}_2(k_1-1,m_1; k_2,m_2)).$ Clearly, by Proposition~\ref{pro12},  $\chi_\rho(G)= \chi_\rho(G-e) = t + 3$, which contradicts the assumption that $G$ is $\chi_\rho$-critical.

\begin{lemma}\label{pro13}
Let $G \in \mathcal{G}_{2}^{(4)}(k_{1},m_{1};k_{2},m_{2})$ and $k_1, k_2 \geq 0, \; m_1, m_2 \geq 0$.
Then $G$ is $\chi_\rho$-critical if and only if one of the following holds:
\begin{enumerate}
\item[(i)] $k_1=k_2=1$ and $m_1=m_2=0$,
\item[(ii)] $k_1=k_2=0$ and $m_1,m_2\ge 1$.
\end{enumerate}
\end{lemma}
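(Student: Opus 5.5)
The forward direction is a case exclusion using the preceding propositions. Let $G \in \mathcal{G}_{2}^{(4)}(k_{1},m_{1};k_{2},m_{2})$ be $\chi_\rho$-critical. If $m_1+m_2\ge 1$, then Proposition~\ref{pro16} forces $k_1+k_2=0$. In that case $k_i+m_i\ge 1$ for both $i$, which gives $m_1,m_2\ge 1$, that is, (ii).

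If $m_1=m_2=0$, then $k_1,k_2\ge 1$ by the definition of the family. Proposition~\ref{pro15} then excludes $k_1+k_2\ge 3$, so $k_1=k_2=1$, that is, (i).

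For the converse, case (i) is exactly Lemma~\ref{lemma6}. The real work is case (ii): $k_1=k_2=0$ and $m_1,m_2\ge 1$, with $t=m_1+m_2$. Proposition~\ref{pro12} gives $\chi_\rho(G)=t+3$, so for every edge $e$ I must exhibit a packing coloring of $G-e$ with $t+2$ colors. I will take the coloring from the proof of Proposition~\ref{pro12} as a template. Its key feature is that color $2$ is used only on $\{x_3,v_{11}\}$, and each $v_{ij}$ other than $v_{11}$ receives its own color. The saving in $G-e$ should come from a color class that becomes legal only after $e$ is deleted: either a third vertex in $X_2$, a second vertex in $X_3$, or one more vertex in $X_1$.

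I would organize the edges by type, using the symmetry swapping $x_1\leftrightarrow x_2$ and $x_3\leftrightarrow x_4$.
\begin{itemize}
\item $e=u_{ij}v_{ij}$. Then $G-e$ has one more independent vertex, since $u_{ij},v_{ij}$ both become leaves. Color both with $1$ and keep the rest of the coloring.
\item $e=x_iu_{ij}$ or $e=x_iv_{ij}$. The triangle becomes a pendant path and its far end moves to distance $4$ from some vertices. I would put that end in $X_2$ or $X_3$ together with a vertex on the opposite side, for instance $v_{21}$, so a class gains a vertex.
\item $e=x_1x_2$. Now $d(v_{11},v_{21})=4$, so $v_{11},v_{21},x_3$ or similar triples can share color $2$. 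Alternatively, $x_1$ and $x_2$ can share color $3$, since in $G-e$ they are at distance $3$ via $x_4,x_3$.
\item $e=x_2x_3$, which is symmetric to $e=x_1x_4$. The vertex $x_3$ becomes a leaf of $x_4$, and its distance to the triangles at $x_2$ grows to $4$. I would try $c(x_3)=c(v_{11})=c(v_{21})=2$ after checking the distances.
\item $e=x_3x_4$. Then $G-e\cong\mathcal{H}(1,m_1;1,m_2)$. Here $X_2$ can take three pairwise far vertices, for example $x_3$, $v_{11}$ and $v_{21}$ if these are pairwise at distance at least $3$; this needs checking.
\end{itemize}

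The main obstacle is verifying these distance conditions edge by edge, especially for $e=x_3x_4$ and for $e=x_1x_2$. Removing an edge may not raise any distance to $3$ among the relevant vertices, and the coloring must then save a color differently, for instance by placing $x_1,x_2$ in $X_3$ or $X_4$ jointly. For each edge type I would first try to fit three vertices into $X_2$, and if that fails, fit two vertices into color class $3$.
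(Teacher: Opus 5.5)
Your forward direction is correct and is the paper's own argument: Propositions~\ref{pro15} and~\ref{pro16}, together with $k_i+m_i\ge 1$. Case (i) via Lemma~\ref{lemma6} is also fine.

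\textbf{The gap.} For the converse in case (ii) you never exhibit a verified $(t+2)$-packing coloring of $G-e$, and several of your concrete suggestions fail.
\begin{itemize}
\item For $e=x_1x_2$, the triple $\{x_3,v_{11},v_{21}\}$ is not a $2$-packing, because $d_{G-e}(x_3,v_{21})=2$. The fallback of letting $x_1,x_2$ share color $3$ is also illegal, since $d_{G-e}(x_1,x_2)=3$ is not larger than $3$.
\item For $e=x_3x_4$, the vertex $x_3$ becomes a leaf of $x_2$, so again $d_{G-e}(x_3,v_{21})=2$.
\item In fact, in both $G-x_1x_2$ and $G-x_3x_4$ every closed neighborhood contains $x_1$ or $x_2$. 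Hence no three vertices are pairwise at distance at least $3$. Your rule ``first try three vertices in $X_2$'' cannot succeed for these edges. A second vertex in $X_3$ helps only if $|X_2|=2$ at the same time, because $|X_1|\le t+1$ in $G-x_1x_2$.
\item In the pendant-path case, your template keeps $x_3\in X_2$. This is incompatible with adding $v_{21}$ to that class, since $d(x_3,v_{21})=2$.
\item For $e=u_{11}v_{11}$, recoloring $v_{11}$ with $1$ frees no color, because $v_{11}$ shares color $2$ with $x_3$ in your template.
\end{itemize}

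\textbf{What is missing} is exactly the list of explicit colorings the paper gives. In each of the four cases below, put all $u_{ij}$ in $X_1$, make any of $x_1,\dots,x_4$ not mentioned a singleton class, and give the remaining $t-2$ vertices $v_{ij}$ distinct new colors. This uses $t+2$ colors.
\begin{itemize}
\item $e=x_1x_2$: add $x_4$ to $X_1$, and take $X_2=\{x_1,x_2\}$ and $X_3=\{v_{11},v_{21}\}$, which are at distance $5$.
\item $e=x_3x_4$: the saving is in $X_1$, which now contains both $x_3$ and $x_4$; take $X_2=\{v_{11},v_{21}\}$.
\item $e=x_2x_3$: your choice $X_2=\{x_3,v_{11},v_{21}\}$ with $x_4\in X_1$ is correct.
\item $e=x_1v_{11}$: swap the roles of $x_3$ and $x_4$, taking $x_3\in X_1$ and $X_2=\{x_4,v_{11},v_{21}\}$, with pairwise distances $3,3,4$.
\end{itemize}
For $e=u_{11}v_{11}$, observe that $G-e\cong\mathcal{G}_{2}^{(4)}(2,m_1-1;0,m_2)$; Proposition~\ref{pro12} then gives $\chi_\rho(G-e)=t+2$. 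All remaining edges follow from these by symmetry.
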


\proof
Let $G \cong \mathcal{G}_{2}^{(4)}(k_{1},m_{1};k_{2},m_{2})$ and $t=m_1+m_2$. Let $G$ be $\chi_\rho$-critical. Then by the definition of $\mathcal{G}_{2}^{(4)}(k_{1},m_{1};k_{2},m_{2})$ together with Propositions~\ref{pro15} and~\ref{pro16}, we have either $(i)$ $m_1=m_2=0$ and $k_1=k_2=1$ or $(ii)$  $k_1=k_2=0$ and $m_1,m_2\ge 1$.
	
For the converse, suppose that either $(i)$ or $(ii)$ holds. 
If $(i)$ holds, then by Lemma~\ref{lemma6}, $G$ is $\chi_\rho$-critical. 
Now assume that $(ii)$ holds, \ie, $k_1 = k_2 = 0$ and $m_1, m_2 \ge 1$.
Since $\chi_\rho(G)=t+3$ by Proposition \ref{pro12},  we show that for every edge $e$ of $G$, we have $\chi_{\rho}(G-e) < \chi_{\rho}(G)=t+3$. Note that the edges of \(G\) can be classified into six distinct types according to their position in the structure of \(G\). Without less of generality, let $m_1 \geq m_2$.
	
\textbf{Case 1:} Consider the edge \( e = x_1x_2 \). In the graph \( G-e \), define a packing coloring \( c : V(G-e) \to [t+2] \) as follows:
$c(x_4) = 1, 
c(x_1) = c(x_2) = 2,
c(v_{11}) =  c(v_{21}) = 3,
c(x_3) = 4.$
Assign color \(1\) to all vertices \(u_{1i}\) and \(u_{2j}\) for \(1 \le i \le m_1\) and \(1 \le j \le m_2\), respectively. 
Finally, assign distinct colors \(5, 6, \dots, t+2\) to the remaining \(t-2\) vertices \(v_{1i}\) and \(v_{2j}\). Then \(\chi_\rho(G-e) \le t+2\).
	
\textbf{Case 2:} Consider the edge $e=x_2x_3$. In the graph \( G-e \), define a packing coloring \( c : V(G-e) \to [t+2] \) as follows:
$c(x_4)= 1, 
c(x_3)=c(v_{11})=c(v_{21})=2,
c(x_1) = 3,
c(x_2)=4.$
Assign color \(1\) to all vertices \(u_{1i}\) and \(u_{2j}\) for \(1 \le i \le m_1\) and \(1 \le j \le m_2\), respectively. 
Finally, assign distinct colors \(5, 6, \dots, t+2\) to the remaining \(t-2\) vertices \(v_{1i}\) and \(v_{2j}\). Then \(\chi_\rho(G-e) \le t+2\).
	
\textbf{Case 3:} Consider the edge $e=x_3x_4$. In the graph \( G-e \), define a packing coloring \( c : V(G-e) \to [t+2] \) as follows:
$c(x_3)=c(x_4)= 1, 
c(v_{11})=c(v_{21})=2,
c(x_1) = 3,
c(x_2) = 4.$
Assign color \(1\) to all vertices \(u_{1i}\) and \(u_{2j}\) for \(1 \le i \le m_1\) and \(1 \le j \le m_2\), respectively. 
Finally, assign distinct colors \(5, 6, \dots, t+2\) to the remaining \(t-2\) vertices \(v_{1i}\) and \(v_{2j}\). Then \(\chi_\rho(G-e) \le t+2\).
	
\textbf{Case 4:} Consider the edge $e=x_1x_4$. In the graph \( G-e \), define a packing coloring \( c : V(G-e) \to [t+2] \) as follows:
$c(x_3)= 1, 
c(x_4)=c(v_{11})=c(v_{21})=2,
c(x_2) = 3,
c(x_1)=4.$
Assign color \(1\) to all vertices \(u_{1i}\) and \(u_{2j}\) for \(1 \le i \le m_1\) and \(1 \le j \le m_2\), respectively. 
Finally, assign distinct colors \(5, 6, \dots, t+2\) to the remaining \(t-2\) vertices \(v_{1i}\) and \(v_{2j}\). Then \(\chi_\rho(G-e) \le t+2\).
	
\textbf{Case 5:} Consider the edge $e=x_1v_{11}$ (which is symmetric to any edge of the form $x_1u_{1i}$ for all $1\le i\le m_1$, $x_1v_{1i}$ for all $2\le i\le m_1$, $x_2u_{2j}$ or $x_2v_{2j}$ for all $1\le j\le m_2$). In the graph \( G-e \), define a packing coloring \( c : V(G-e) \to [t+2] \) as follows:
$c(x_3)= 1, 
c(x_4)=c(v_{11})=c(v_{21})= 2,
c(x_1)=3,
c(x_2)=4.$
Assign color \(1\) to all vertices \(u_{1i}\) and \(u_{2j}\) for \(1 \le i \le m_1\) and \(1 \le j \le m_2\), respectively. 
Finally, assign distinct colors \(5, 6, \dots, t+2\) to the remaining \(t-2\) vertices \(v_{1i}\) and \(v_{2j}\). Then \(\chi_\rho(G-e) \le t+2\).
	
\textbf{Case 6:} Consider the edge $e=u_{11}v_{11}$ (which is symmetric to the edge $e = u_{1i}v_{1i}$ for all $2 \leq i \leq m_1$ or $e = u_{2j}v_{2j}$ for all $1 \leq j \leq m_2$).  Clearly, $G-e$ is isomorphic to the graph $\mathcal{G}_{2}^{(4)}(2,m_{1}-1;0,m_{2})$ and $\chi_{\rho}(\mathcal{G}_{2}^{(4)}(2,m_{1}-1;0,m_{2})) = t+2$ for $t\geq1$ by Proposition \ref{pro12}. We conclude that  $\chi_\rho(G-e) =  t+2$.
	
By considering all possible cases, we conclude that \( G \) is \( \chi_{\rho} \)-critical.
\qed

%%%%%%%%%%%%%%%%%%%%%%%%%%%%%%%%%%%%%%%%%%%%%%%%%
\subsubsection{$\chi_{\rho}$-critical cactus graphs with radius 2 and diameter $3$, where $C_3$ is the main block}
%%%%%%%%%%%%%%%%%%%%%%%%%%%%%%%%%%%%%%%%%%%%%%%%%
In this section, we will consider cactus graphs with radius 2 and diameter $3$ with $C_3$ as their main block. Note that such graphs form a subclass of block graphs. Hence, the characterization of block graphs with diameter $3$ (and thus radius $2$) given in Lemma~\ref{lemma8} applies directly to the graphs considered in this section.

\begin{lemma}~{\rm \cite{bresar-2022}}\label{lemma8}
	Let $G$ be a block graph with $\mathrm{diam}(G) = 3$, and let $B$ be the block induced by the center of $G$ (called the central block, and every other block of $G$ is called a side block). The graph $G$ is $\chi_\rho$-critical if and only if one of the following three possibilities holds for the vertices of $B$.
    \begin{itemize}
	    \item[($a$)] All vertices in $V(B)$ have degree $|V(B)|$.
	
	\item[($b$)] All vertices in $V(B)$ have degree $|V(B)|+1$, and exactly $|V(B)|-1$ vertices of $B$ have two leaf neighbors.
	
	\item[($c$)] For each vertex $x \in V(B)$, at least one of the following three properties holds.

    \begin{itemize}
        \item[($c_1$)] $x$ belongs to at least one side block of order at least $4$, and does not have any leaf neighbor;
	
	\item[($c_2$)] $x$ belongs to at least two side blocks of order $3$, and does not have any leaf neighbor;
	
	\item[($c_3$)] $x$ has degree $|V(B)|+1$ and has two neighbors, which are both leaves; in addition, at least one vertex in $V(B)$ satisfies one of the properties ($c_1$) or ($c_2$).
    \end{itemize}
	\end{itemize}
\end{lemma}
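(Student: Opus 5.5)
The plan is to reduce $\chi_\rho$ to a purely combinatorial maximization, in the same spirit as Lemma~\ref{lemma4} and the counting arguments in Propositions~\ref{pro8} and~\ref{pro12}, and then test criticality edge by edge. Since $\mathrm{diam}(G)=3$ and $G$ is a block graph, $\mathrm{rad}(G)=2$. The center then induces a single block $B$, and every vertex outside $B$ lies in a side block attached to some $x\in V(B)$. I will first verify this structure.

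Any two vertices are at distance at most $3$, so every color class $X_i$ with $i\ge 3$ is a singleton. Two vertices at distance exactly $3$ must be non-$B$ vertices attached at distinct vertices of $B$. Hence $X_2$ contains at most one non-$B$ vertex per attachment vertex $x\in V(B)$, and no vertex of $B$ is in $X_2$ unless $X_2$ is a singleton.

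This gives
\[
\chi_\rho(G)=|V(G)|+2-f(G),\qquad f(G)=\max\{|I|+|J|\},
\]
where the maximum is over disjoint pairs with $I$ independent and $J$ a $2$-packing. I will compute $f(G)$ explicitly. An optimal $I$ picks one non-cut vertex from each side block, and possibly one vertex of $B$ if some $x\in V(B)$ has no side block. $J$ picks, for each attachment vertex $x$, one vertex outside $B$ adjacent to $x$ that is not used by $I$. Such a vertex exists exactly when $x$ has a side block of order at least $3$, or at least two side blocks. A leaf neighbor is the costly case: it is the only candidate in its block, so if it goes into $J$, that block contributes nothing to $I$. This is what distinguishes leaves from blocks of order $\ge 3$ and explains the roles of $(c_1)$, $(c_2)$, and $(c_3)$.

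Next, criticality (equivalently edge-criticality, since there are no isolated vertices) means $\chi_\rho(G-e)<\chi_\rho(G)$ for every edge $e$. I will split the edges into three types:
\begin{itemize}
\item[(1)] edges inside $B$;
\item[(2)] edges of a side block incident to its attachment vertex $x$, including pendant edges;
\item[(3)] edges of a side block not meeting $B$.
\end{itemize}
For type (3), $G-e$ still has diameter $3$ and essentially the same structure. I will show $f$ increases by enlarging $I$ inside that block, except in configurations that force the stated conditions.

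For types (1) and (2), the diameter of $G-e$ may become $4$, or $G-e$ may split off a leaf. Here I will not use the formula for $G-e$. Instead I will exhibit an explicit $(\chi_\rho(G)-1)$-packing coloring of $G-e$, in the style of Lemmas~\ref{pro7} and~\ref{pro13}: reuse color $2$ (or $3$) on the two newly distant vertices. For the necessity direction, I will show that whenever some $x\in V(B)$ violates all of $(a)$, $(b)$, and $(c_1)$--$(c_3)$, some edge $e$ has $\chi_\rho(G-e)=\chi_\rho(G)$. Typically $e$ will be a pendant edge at a vertex with a superfluous leaf, as in Propositions~\ref{pro5} and~\ref{pro15}, or an edge of $B$ whose removal does not increase $f$.

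The main obstacle is the type (1) edges inside $B$ when $B$ is large. Removing $xy$ creates new distance-$3$ pairs, and possibly distance-$4$ pairs between the side vertices at $x$ and at $y$. Whether this saves a color depends delicately on the leaf and side-block configuration at $x$ and $y$, and on whether the global optimum of $f$ uses $B$ at all.

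This is exactly where the exceptional cases $(a)$ (no side blocks, so $G=K_n$ plus pendant structure of a fixed degree) and $(b)$ (two leaves at all but one vertex) arise. My first attempt will be to handle these by directly comparing $f(G)$ with the analogous quantity for $G-xy$. I will allow $J$ to contain $x$ and $y$ together, or a side vertex of $x$ together with a side vertex of $y$, now at distance $\ge 3$. I will then enumerate the degree patterns $\deg(x)\in\{|V(B)|-1,|V(B)|,|V(B)|+1,\dots\}$ to see exactly when the gain is positive.
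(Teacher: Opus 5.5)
The paper does not prove this lemma. It is quoted from \cite{bresar-2022} and used as a black box in Lemma~\ref{teo1}, so your argument has to stand on its own. As it stands, it does not prove the lemma: one concrete step is wrong, and the decisive case analysis is missing.

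\textbf{The evaluation of $f$ is wrong.} The reduction $\chi_\rho(G)=|V(G)|+2-f(G)$ is sound. Your recipe, however, puts one non-cut vertex of each side block into $I$, a vertex of $B$ only if it lies in no side block, and unused side vertices into $J$. It misses the option of putting a vertex $y\in V(B)$ whose only side block is a single leaf $w$ into $I$, with $w$ moved into $J$.
\begin{itemize}
\item[$\circ$] Counterexample: let $B=\{x_1,x_2\}$, let $x_1$ lie in the triangles $x_1aa'$ and $x_1bb'$, and let $x_2$ have one leaf $c$. Your recipe gives $f=4$ and $\chi_\rho=5$. But $I=\{x_2,a,b\}$, $J=\{c,a'\}$ gives $f=5$, so $\chi_\rho=4$.
\item[$\circ$] Also, two side blocks at $x$ that are both $K_2$ do not supply a free vertex for $J$, contrary to your claim.
\item[$\circ$] One checks that
\[
f(G)=s+\max\{|F|,1\}+\delta .
\]
Here $s$ is the number of side blocks, and $F$ is the set of vertices of $B$ lying in a side block of order at least $3$. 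Further, $\delta=1$ exactly when some vertex of $B$ lies in no side block, or when $F\neq\emptyset$ and some vertex of $B$ has a single side block which is a $K_2$; otherwise $\delta=0$.
\end{itemize}
This missing term is central. Deleting a leaf at a $(c_3)$-vertex, or at a two-leaf vertex in (b), lowers $\chi_\rho$ precisely because $\delta$ jumps from $0$ to $1$. Your counterexample graph is $\mathcal{H}(0,2;2,0)$ minus a pendant edge. Your recipe would assign it $\chi_\rho=5=\chi_\rho(\mathcal{H}(0,2;2,0))$, contradicting (c). The same term decides every necessity case involving single-leaf vertices.

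\textbf{You misread (a) and (b).} Degree $|V(B)|$ means each vertex of $B$ has exactly one outside neighbour, necessarily a leaf. So (a) is the corona $K_n\circ K_1$ (for instance $P_4$), not ``no side blocks''. In (b), the exceptional vertex lies in exactly one triangle.

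\textbf{The decisive steps are only announced.}
\begin{itemize}
\item[$\circ$] Sufficiency: if $e=xy\in E(B)$, or $e=xu$ with $u$ in a side block of order at least $3$, then $G-e$ has diameter $4$, or is disconnected when $|V(B)|=2$. The $f$-formula then does not apply. You must actually write down $(\chi_\rho(G)-1)$-packing colourings of $G-e$ in each of (a), (b), (c), exploiting the new pairs at distance $4$.
\item[$\circ$] Necessity: for every forbidden configuration you must name an edge $e$ and prove the lower bound $\chi_\rho(G-e)\ge\chi_\rho(G)$.
\item[$\circ$] Your ``edge of $B$ whose removal does not increase $f$'' makes sense only when $G-e$ still has diameter $3$, that is, when one end of $e$ lies in no side block. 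Even then the other end must be chosen in $F$: if it has a single leaf and $F\neq\emptyset$, then $f$ increases.
\item[$\circ$] With the corrected $f$, one finds that three kinds of edge suffice. These are pendant edges, the non-$B$ edge of a lone triangle when $|F|\ge2$, and such a $B$-edge. Lemma~\ref{lemma4} handles the cases where deleting a leaf drops the diameter to $2$. None of this analysis is carried out in your proposal.
\end{itemize}
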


\begin{lemma}\label{teo1}
Let $G$ be a cactus graph with $\mathrm{rad}(G) = 2$  and  $\mathrm{diam}(G) = 3$ whose main block is $C_{3}$. 
Then $G$ is $\chi_\rho$-critical if and only if $G$ is one of the following graphs:
\begin{enumerate}[label=(\roman*)]
\item $\mathcal{G}_{3}^{(3)}(1,0;\,1,0;\,1,0)$,
\item $\mathcal{G}_{3}^{(3)}(2,0;\,2,0;\,0,1)$,
\item $\{\mathcal{G}_{3}^{(3)}(0,m_{1};\,0,m_{2};\,0,m_{3}) \mid m_{1},m_{2},m_{3}\ge 2\}$,
\item $\{\mathcal{G}_{3}^{(3)}(0,m_{1};\,0,m_{2};\,2,0) \mid m_{1},m_{2}\ge 2\}$,
\item $\{\mathcal{G}_{3}^{(3)}(0,m_{1};\,2,0;\,2,0) \mid m_{1}\ge 2\}$,
\item $\mathcal{H}(2,0;\,0,1)$,
\item $\{\mathcal{H}(0,m_{1};\,0,m_{2}) \mid m_{1},m_{2}\ge 2\}$,
\item $\{\mathcal{H}(0,m_{1};\,2,0) \mid m_{1}\ge 2\}$.
\end{enumerate}
\end{lemma}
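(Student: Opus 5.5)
The plan is to reduce everything to Lemma~\ref{lemma8}. Since every cycle of a cactus graph with main block $C_3$ and $\mathrm{rad}(G)=2$ is a triangle, $G$ is a cactus whose blocks are all $K_2$ or $K_3$; in particular $G$ is a block graph with $\mathrm{diam}(G)=3$. The first step is therefore to identify the central block $B$ induced by the center of $G$. Diameter $3$ in a block graph forces the center to induce a single block, and that block is either a $K_2$ or a $K_3$. I would also use the diameter bound to show that every side block is attached directly to a vertex of $B$ and that every vertex outside $B$ lies in a side block incident to $B$. Indeed, a vertex at distance $2$ from $B$ would produce a path of length at least $4$, or it would force the center to lie elsewhere. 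Consequently, if $B\cong K_3$, then $G\in\mathcal{G}^{(3)}_q(\dots)$ with the side blocks being $K_2$ (leaves) or $C_3$. If $B\cong K_2$, then $G\in\mathcal{H}(k_1,m_1;k_2,m_2)$. Radius $2$ rules out a universal vertex, so $q=3$ in the $C_3$ case: every vertex of $B$ must carry a side block. Likewise $k_i+m_i\ge1$ for both endpoints in the $K_2$ case.

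Next, I would translate the conditions of Lemma~\ref{lemma8} into the parameters $k_i,m_i$. A vertex $x_i\in V(B)$ has degree $|V(B)|-1+k_i+2m_i$. Side blocks have order at most $3$, so ($c_1$) never occurs. Condition ($c_2$) becomes $m_i\ge2$ and $k_i=0$. For ($c_3$), degree $|V(B)|+1$ means $k_i+2m_i=2$, and having two leaf neighbors forces $k_i=2$, $m_i=0$. Condition ($a$), degree $|V(B)|$, means $k_i+2m_i=1$, so $k_i=1$ and $m_i=0$ for every $i$. In condition ($b$), each vertex has $k_i+2m_i=2$, with exactly $|V(B)|-1$ vertices having $(k_i,m_i)=(2,0)$. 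The remaining vertex then has degree $|V(B)|+1$ but lacks two leaf neighbors, so it has $(0,1)$.

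Finally, I would enumerate. For $|V(B)|=3$: ($a$) gives (i), ($b$) gives (ii), and ($c$) with at least one vertex of type ($c_2$) gives, up to symmetry, (iii), (iv) and (v), where all three vertices are ($c_2$), one is ($c_3$), or two are ($c_3$). For $|V(B)|=2$: ($a$) would give $P_4$, which is a tree and so excluded because the main block is $C_3$. ($b$) gives (vi), and ($c$) gives (vii) and (viii). Conversely, each listed graph satisfies the corresponding condition, so Lemma~\ref{lemma8} yields criticality.

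The main obstacle is the structural first step: justifying rigorously that the center induces exactly one block, and that $G$ is of the form $\mathcal{G}^{(3)}_3$ or $\mathcal{H}$. One subtlety is that when $B\cong K_2$, the main block $C_3$ is a side block, and the graph must still contain a triangle. This excludes $\mathcal{H}$ with $m_1=m_2=0$ and also accounts for why (vi) contains a $C_3$. I would handle this by taking a diametral path $a-b-c-d$ and arguing that the center consists of the vertices of eccentricity $2$, which are $b$ and $c$ together with any vertex adjacent to both.
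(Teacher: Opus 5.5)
Your approach is the same as the paper's. You identify the central block $B$ as $K_3$ or $K_2$, translate conditions ($a$), ($b$), ($c_2$), ($c_3$) of Lemma~\ref{lemma8} into the parameters $(k_i,m_i)$, with ($c_1$) impossible because side blocks have order at most $3$, and then enumerate. Your translation and the resulting list (i)--(viii), including the exclusion of $P_4$ as a tree, agree with the paper. Your structural paragraph (every vertex outside $B$ lies in a side block at $B$) is more explicit than the paper, which simply asserts the shape of $G$.

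One claim is false as stated: radius $2$ does not force $q=3$ when $B\cong K_3$. It only rules out $q=1$, i.e.\ a universal vertex. For example, take the triangle $x_1x_2x_3$ with one leaf at $x_1$ and one leaf at $x_2$. This is a cactus with radius $2$, diameter $3$, main block $C_3$ and center $\{x_1,x_2,x_3\}$, yet $x_3$ carries no side block.

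This does not affect the final result; drop the claim and allow $k_i=m_i=0$ in your translation step. A vertex of $B$ with no side block has degree $|V(B)|-1$. Each of ($a$), ($b$), ($c$) requires every vertex of $B$ to have degree at least $|V(B)|$, equivalently $k_i+2m_i\ge 1$. So such graphs are excluded by Lemma~\ref{lemma8} itself, not by the radius. In the $K_2$ case your claim $k_i+m_i\ge 1$ is correct, since otherwise the other endpoint of $B$ would be universal.
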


\proof 
Since the main block of $G$ is $C_3$, the central block $B$ of $G$ is either isomorphic to $C_3$ or to $K_2$. Assume first that $B\cong C_3$. Then $|V(B)|=3$.
If condition {\rm($a$)} of Lemma \ref{lemma8} holds, then every vertex of $B$ has degree $3$. Therefore,
$G\cong \mathcal{G}_{3}^{(3)}(1,0;\,1,0;\,1,0)$ and {\rm($i$)} holds.
If condition {\rm($b$)} holds, then every vertex of $B$ has degree $4$, and exactly two vertices of $B$ have two leaf neighbors. Hence,
$G\cong \mathcal{G}_{3}^{(3)}(2,0;\,2,0;\,0,1)$ and {\rm($ii$)} holds.
Now suppose that condition {\rm($c$)} holds. Since {\rm($c_1$)} is not valid, at least one vertex of $B$ satisfies {\rm($c_2$)}. If all three vertices satisfy {\rm($c_2$)}, then each vertex belongs to at least two side blocks of order $3$, and so
$G\in \{\mathcal{G}_{3}^{(3)}(0,m_{1};\,0,m_{2};\,0,m_{3}) \mid m_{1},m_{2},m_{3}\ge 2\}$. Thus, {\rm($iii$)} holds.
If exactly two vertices satisfy {\rm($c_2$)}, then the remaining vertex satisfies {\rm($c_3$)}, and thus
$G\in \{\mathcal{G}_{3}^{(3)}(0,m_{1};\,0,m_{2};\,2,0) \mid m_{1},m_{2}\ge 2\}$ and {\rm($iv$)} holds.
If exactly one vertex satisfies {\rm($c_2$)}, then the other two satisfy {\rm($c_3$)}, and hence,
$G\in \{\mathcal{G}_{3}^{(3)}(0,m_{1};\,2,0;\,2,0) \mid m_{1}\ge 2\}.$ Thus, {\rm($v$)} holds.

Next assume that $B\cong K_2$. Then $|V(B)|=2$. If condition {\rm($a$)} of Lemma \ref{lemma8} holds, then every vertex of $B$ has degree $2$. Therefore,
$G\cong P_4,$ which is not valid for our case since its main block is not $C_3$.
If condition {\rm($b$)} holds, then each vertex of $B$ has degree $3$, and exactly one vertex of $B$ has two leaf neighbors. Therefore, 
$G\cong \mathcal{H}(2,0;\,0,1)$ and {\rm($vi$)} holds.
Finally, suppose that condition {\rm($c$)} holds. Again, {\rm($c_1$)} is not valid, so at least one vertex of $B$ satisfies {\rm($c_2$)}. If both vertices satisfy {\rm($c_2$)}, then each belongs to at least two side blocks of order $3$, and we obtain
$G\in \{\mathcal{H}(0,m_{1};\,0,m_{2}) \mid m_{1},m_{2}\ge 2\}.$ Thus, {\rm($vii$)} holds.
If exactly one vertex satisfies {\rm($c_2$)}, then the other satisfies {\rm($c_3$)}, and thus
$G\in \{\mathcal{H}(0,m_{1};\,2,0) \mid m_{1}\ge 2\},$ {\rm($viii$)} holds.
\qed

%%%%%%%%%%%%%%%%%%%%%%%%%%%%%%%%%%%%%%%%%%%%%%%%%
\section{Summary of results}
\label{sec:mainthm}
%%%%%%%%%%%%%%%%%%%%%%%%%%%%%%%%%%%%%%%%%%%%%%%%%

We now establish our main result concerning $\chi_{\rho}$-critical cactus graphs with radius $2$ and diameter $3$. 

\begin{theorem}\label{teo4}
Let $G$ be a cactus graph with $\mathrm{rad}(G) = 2$  and  $\mathrm{diam}(G) = 3$. Then $G$ is $\chi_{\rho}$-critical if and only if it is isomorphic to one of the following graphs:
\begin{enumerate}[label=(\roman*)]
\item $P_{4}$,
\item $\{\mathcal{G}_{1}^{(5)}(0,m_{1}) \mid m_{1} \ge 2\},$
\item $\mathcal{G}_{2}^{(4)}(1,0;\,1,0)$,
\item $\{\mathcal{G}_{2}^{(4)}(0,m_{1};\,0,m_{2}) \mid m_{1},m_{2} \ge 1\},$
\item $\mathcal{G}_{3}^{(3)}(1,0;\,1,0;\,1,0)$,
\item $\mathcal{G}_{3}^{(3)}(2,0;\,2,0;\,0,1)$,
\item $\{\mathcal{G}_{3}^{(3)}(0,m_{1};\,0,m_{2};\,0,m_{3}) \mid m_{1},m_{2},m_{3} \ge 2\},$
\item $\{\mathcal{G}_{3}^{(3)}(0,m_{1};\,0,m_{2};\,2,0) \mid m_{1},m_{2} \ge 2\},$
\item $\{\mathcal{G}_{3}^{(3)}(0,m_{1};\,2,0;\,2,0) \mid m_{1} \ge 2\},$
\item $\mathcal{H}(2,0;\,0,1)$,
\item $\{\mathcal{H}(0,m_{1};\,0,m_{2}) \mid m_{1},m_{2} \ge 2\},$
\item $\{\mathcal{H}(0,m_{1};\,2,0) \mid m_{1} \ge 2\}.$
\end{enumerate}
\end{theorem}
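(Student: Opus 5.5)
The plan is to split according to whether $G$ is acyclic and, if not, according to the main block $C^*$. By Observation~\ref{lem:mainblock}, $C^*\in\{C_3,C_4,C_5\}$. If $G$ is a tree, Lemma~\ref{pro3} gives $G\cong P_4$, which is item (i). If $C^*\cong C_3$, then every cycle of $G$ is a triangle and $G$ is a block graph, so Lemma~\ref{teo1} yields exactly items (v)--(xii). It remains to treat $C^*\in\{C_4,C_5\}$. Here the key step is a structural reduction: every cactus graph with $\mathrm{rad}(G)=2$ and $\mathrm{diam}(G)=3$ whose main block is $C_5$ (resp.\ $C_4$) lies in $\mathcal{G}_q^{(5)}$ with $q\in\{1,2\}$ (resp.\ $\mathcal{G}_q^{(4)}$ with $q\in\{1,2\}$). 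Once this is established, Lemmas~\ref{pro7} and~\ref{pro9} give item (ii), and Lemmas~\ref{pro11} and~\ref{pro13} give items (iii) and (iv).

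For the reduction, fix $C^*=(x_1,\dots,x_r)$ and argue in three steps.

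\textbf{Step 1: vertices off the cycle are adjacent to it.} Let $x\notin V(C^*)$ and let $x_i$ be its nearest cycle vertex. For $r=5$, $x$ has distance at least $d(x,x_i)+2$ to the two antipodal cycle vertices. For $r=4$, $x$ has distance $d(x,x_i)+2$ to the opposite vertex. Diameter $3$ therefore forces $d(x,x_i)=1$, so $V(G)\setminus V(C^*)\subseteq N(V(C^*))$.

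\textbf{Step 2: outer blocks are $K_2$ or $C_3$.} An outer block at $x_i$ cannot be $C_4$ or $C_5$, since then it would contain a vertex at distance $2$ from $x_i$. Hence every outer block is $K_2$ or $C_3$, attached at cut vertices of $C^*$.

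\textbf{Step 3: at most two cut vertices, and they are consecutive.} For $r=5$, two nonadjacent cut vertices $x_i,x_{i+2}$ would have outer neighbors at distance $1+2+1=4$. Thus the cut vertices are pairwise adjacent on $C_5$, so there are at most two and they are consecutive. For $r=4$, opposite cut vertices give distance $4$ likewise, so again there are at most two and they are consecutive. Moreover, when $r=4$ there must be at least one cut vertex, since $C_4$ itself has diameter $2$; when $r=5$ there is also at least one, since $\mathrm{diam}(C_5)=2$. Finally I would check that all members of $\mathcal{G}_1^{(r)},\mathcal{G}_2^{(r)}$ for $r\in\{4,5\}$ indeed have radius $2$ and diameter $3$, so that the listed graphs qualify.

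\textbf{Converse.} Each listed graph is $\chi_\rho$-critical by the cited lemmas: Lemma~\ref{pro3}, Lemma~\ref{pro7}, Lemma~\ref{pro13} (both of its items), and Lemma~\ref{teo1}. Each also has radius $2$ and diameter $3$, which is immediate from its structure.

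\textbf{Main obstacle.} The careful part is Step 3 together with the distance computations for $C_4$. In particular, for $C_4$ with three consecutive cut vertices $x_1,x_2,x_3$, the cut vertices $x_1$ and $x_3$ are opposite, and I must confirm that a leaf at $x_1$ and a leaf at $x_3$ are at distance $4$. The same check is needed for every outer-block combination. I expect these to be routine distance checks.
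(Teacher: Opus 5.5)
Your proposal is correct and follows the paper's proof essentially step for step: trees are handled by Lemma~\ref{pro3}, the case $C^*\cong C_3$ by Lemma~\ref{teo1}, and for $C^*\in\{C_4,C_5\}$ you reduce to $\mathcal{G}_1^{(r)}\cup\mathcal{G}_2^{(r)}$ and then apply Lemmas~\ref{pro7}, \ref{pro9}, \ref{pro11} and~\ref{pro13}. Your Step~1 (every vertex off $C^*$ is adjacent to $C^*$, since all paths from outside a block enter it through a single vertex) spells out a justification that the paper compresses into the phrase ``each outer block has diameter $1$''.
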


\proof Let \( G \) be a  cactus graph with $\mathrm{rad}(G) = 2$  and  $\mathrm{diam}(G) = 3$ and assume that $G$ is a $\chi_{\rho}$-critical graph. We first consider the case where \( G \) is tree. In this case, we have \( G \cong P_4 \) by Lemma \ref{pro3}. Next, suppose that \( G \) contains at least one cycle. Let $C^*$ be the main block. Recall that $C^*$ is either $C_3$, $C_4$, or $C_5$ by Observation \ref{lem:mainblock}. 

We begin with a common argument for the cases where $C^* \cong C_r$ with $r \in \{4,5\}$.
Since $\operatorname{diam}(C^*)=2$ and $\operatorname{diam}(G)=3$, we have $G \not\cong C^*$. Hence, $G$ contains at least one outer block. Moreover, each outer block has diameter $1$, and so every outer block is isomorphic to either $K_2$ or $C_3$. We also claim that all outer blocks must be attached to $C^*$ either at a single vertex or at two consecutive vertices. Indeed, if two outer blocks are attached at vertices $x_i$ and $x_j$ with $d_{C^*}(x_i,x_j)=2$, then for vertices $u$ and $v$ in these blocks, we obtain $d_G(u,v) \ge 4,$
contradicting $\operatorname{diam}(G)=3$.
	
\textbf{Case 1:} Let \( C^* \cong C_5=(x_1, x_2, x_3, x_4, x_5) \).

Then $G \in \mathcal{G}_{1}^{(5)}(k_1,m_1)$ or $G \in \mathcal{G}_{2}^{(5)}(k_{1},m_{1};k_{2},m_{2})$ in this case. By Lemmas \ref{pro7} and  \ref{pro9}, we have $G \in \{\mathcal{G}_{1}^{(5)}(0,m_1) \mid m_1 \ge 2\}$.
	
\textbf{Case 2:} Let \( C^* \cong C_4=(x_1, x_2, x_3, x_4) \).

Then $G \in \mathcal{G}_{1}^{(4)}(k_1,m_1)$ or $G \in \mathcal{G}_{2}^{(4)}(k_{1},m_{1};k_{2},m_{2})$ in this case. By Lemmas \ref{pro11} and  \ref{pro13}, we have $G \cong \mathcal{G}_{2}^{(4)}(1,0;1,0)$ or  $G \in \{\mathcal{G}_{2}^{(4)}(0,m_1;0,m_2) \mid m_1, m_2 \ge 1\}$.
	
\textbf{Case 3:} Let \( C^* \cong C_3=(x_1, x_2, x_3) \). 

In this case,  $G$ is a block graph. Then by Lemma \ref{teo1}, $G$ is one of the following graphs: $\mathcal{G}_{3}^{(3)}(1,0;\,1,0;\,1,0)$, $\mathcal{G}_{3}^{(3)}(2,0;\,2,0;\,0,1)$,
$\{\mathcal{G}_{3}^{(3)}(0,m_{1};\,0,m_{2};\,0,m_{3}) \mid m_{1},m_{2},m_{3} \ge 2\},$
$\{\mathcal{G}_{3}^{(3)}(0,m_{1};\,0,m_{2};\,2,0) \mid m_{1},m_{2} \ge 2\},$
$\{\mathcal{G}_{3}^{(3)}(0,m_{1};\,2,0;\,2,0) \mid m_{1} \ge 2\},$
$\mathcal{H}(2,0;\,0,1)$,
$\{\mathcal{H}(0,m_{1};\,0,m_{2}) \mid m_{1},m_{2} \ge 2\},$
$\{\mathcal{H}(0,m_{1};\,2,0) \mid m_{1} \ge 2\}.$
	
The converse is clear by Lemmas \ref{pro3}, \ref{pro7}, \ref{lemma6}, \ref{pro13}, and \ref{teo1}.
\qed

In this paper, we characterized $\chi_\rho$-critical graphs of radius $1$ (see Observation~\ref{obs:complete} and Theorem~\ref{thm:12}). We also provided a complete structural characterization of $\chi_\rho$-critical cactus graphs with radius $2$ and diameter $2$ or $3$ (see Theorems~\ref{teo3} and~\ref{teo4}).
These results naturally lead to the following open problem. In particular, complete characterizations of $\chi_\rho$-critical trees and of cactus graphs with radius $2$ and diameter $4$ remain open.

%%%%%%%%%%%%%%%%%%%%%%%%%%%%%%%%%%%%%%%%%%%%%%%
\section*{Acknowledgements}
%%%%%%%%%%%%%%%%%%%%%%%%%%%%%%%%%%%%%%%%%%%%%%%

This work was supported by the Scientific and Technological Research Council of Türkiye (TÜBİTAK) under grant no. 124F114.

\iffalse
%%%%%%%%%%%%%%%%%%%%%%%%%%%%
\section*{Declaration of interests}
%%%%%%%%%%%%%%%%%%%%%%%%%%%%

The authors declare that they have no known competing financial interests or personal relationships that could have appeared to influence the work reported in this paper.

%%%%%%%%%%%%%%%%%%%%%%%%%%%%
\section*{Data availability}
%%%%%%%%%%%%%%%%%%%%%%%%%%%%

Our manuscript has no associated data.
\fi

\baselineskip13pt
%%%%%%%%%%%%%%%%%%%%%%%%%%%%%%%%%%%%%%%%%%%%%%%%%%

\end{document}